\newtheorem{theorem}{Theorem}[section]
\newtheorem{proposition}[theorem]{Proposition}
\newtheorem{conjecture}[theorem]{Conjecture}
\newtheorem{corollary}[theorem]{Corollary}
\newtheorem{lemma}[theorem]{Lemma}
\newcommand{\qed}{\hfill ~$\square$\bigskip}
\newcommand{\proof}{\noindent{\bf Proof.} }
\newcommand{\cp}{\,\square\,}
\newcommand{\ggr}{\gamma_{gr}}
\begin{document}

\title{Dominating sequences in grid-like and toroidal graphs}

\author{
Bo\v stjan Bre\v sar $^{a,b}$ \and Csilla Bujt{\'a}s $^{c,d}$ \and
Tanja Gologranc $^{a,b}$ \and Sandi Klav\v zar $^{e,a,b}$ \and Ga\v sper
Ko\v smrlj $^{b}$ \and Bal{\'a}zs Patk{\'o}s $^{d}$ \and Zsolt Tuza
$^{c,d}$ \and M{\'a}t{\'e} Vizer $^{d}$ }

\maketitle

\begin{center}
$^a$ Faculty of Natural Sciences and Mathematics, University of Maribor, Slovenia\\
\medskip

$^b$ Institute of Mathematics, Physics and Mechanics, Ljubljana, Slovenia\\
\medskip

$^c$ Department of Computer Science and Systems Technology, University of Pannonia, Veszpr\'em, Hungary\\
\medskip

$^d$ Alfr\'ed R\'enyi Institute of Mathematics, Hungarian Academy of
Sciences, Budapest, Hungary
\medskip

$^e$ Faculty of Mathematics and Physics, University of Ljubljana, Slovenia\\
\medskip

\end{center}

\begin{abstract}
A longest sequence $S$ of distinct vertices of a graph $G$ such that each vertex of $S$ dominates some vertex that is not dominated by its preceding vertices, is called a Grundy dominating sequence; the length of $S$ is the Grundy domination number of $G$. In this paper we study the Grundy domination number in the four standard graph products: the Cartesian, the lexicographic, the direct, and the strong product. For each of the products  we present a lower bound for the Grundy domination number which turns out to be exact for the lexicographic product and is conjectured to be exact for the strong product. In most of the cases exact Grundy domination numbers are determined for products of paths and/or cycles. 
\end{abstract}

\noindent
{\bf Keywords:} Grundy domination; graph product; edge clique cover; isoperimetric inequality  \\

\noindent
{\bf AMS Subj.\ Class.\ (2010)}: 05C69, 05C76

\section{Introduction}

If $G$ is a graph, then a sequence $S=(v_1,\ldots,v_k)$ of distinct vertices of $G$ is called a {\em legal (closed neigborhood) sequence} if, for each $i$,
$$N[v_i] \setminus \bigcup_{j=1}^{i-1}N[v_j]\not=\emptyset\,.$$
In words, for any $i$ the vertex $v_i$ dominates at least one vertex not dominated by $v_1, \ldots, v_{i-1}$. If the set of vertices from a legal sequence $S$ forms a dominating set of $G$, then $S$ is called a {\em dominating sequence} of $G$. Clearly, the length of a dominating sequence is bounded from below by the domination number $\gamma(G)$ of a graph $G$. The maximum length of a dominating sequence in $G$ is called the {\em Grundy domination number} of $G$ and denoted by $\gamma_{gr}(G)$. The corresponding sequence is called a {\em Grundy dominating sequence} of $G$ or $\gamma_{gr}$-sequence of $G$.

These concepts were introduced in~\cite{bgmrr-2014}, where the Grundy domination number
was established for graphs of some well-known classes (such as split graphs and cographs),
 and a linear time algorithm to determine this number in an arbitrary tree was presented.
 It was also shown that the decision version of the problem is NP-complete, even when restricted to chordal graphs.
 Motivated by the question, when a Grundy dominating sequence produces a minimum dominating set
  (and hence any legal sequence is of the same length), the classes of graphs with domination numbers up to $3$ having this property have been characterized. The concept was further studied in~\cite{bgk2016+}, where exact formulas for Grundy domination numbers of Sierpi\' nski graphs were proven, and a linear algorithm for determining these numbers in arbitrary interval graphs was presented.

It is natural to study graph invariants  on graph products,
especially on the four standard ones: the Cartesian, the
lexicographic, the direct and the strong product~\cite{hik-2011}.
Among different reasons for the popularity of graph products, we
emphasize the fact that several intriguing questions, related to
products of graphs, gave new insights and new developments in
studies of the involved invariants. Let us mention the still open,
famous Vizing's conjecture on the domination number of the Cartesian
product of graphs, which was posed in the 1960's~\cite{V1963}, and
initiated the introduction of several new concepts and methods,
cf.~\cite{bresar-2012}. The domination number of the Cartesian
product of paths was completely  determined only in
2011~\cite{gprt-2011}, after a long period in which various attempts
produced different approaches to graph domination problems.
Cartesian products of paths by cycles were investigated
in~\cite{pavlic-2013} and products of cycles in~\cite{klavzar-1995}.

 With respect to other
standard products, an emphasis was given on $r$-perfect codes.
Recall that a set $S$ of vertices of a graph $G$ is an $r$-perfect
code if vertices from $S$ are pairwise at distance at least $2r+1$
and every vertex of $G$ is at distance at most at $r$ from some
vertex from $S$. It is easy to observe that if a graph admits a
$1$-perfect code, then its size is the domination number of the
graph. In~\cite{klavzar-2006,zerovnik-2008} perfect codes in direct
products of cycles were characterized, while in~\cite{abay-2009}
and~\cite{taylor-2009} perfect codes were investigated in strong and
lexicographic products, respectively. In particular, the strong
product of graphs has a perfect $r$-code if and only if each factor
has a perfect $r$-code.
  For a uniform treatment of domination (and
other invariants) on graph products see~\cite{nowakowski-1996}.

In this paper we focus on dominating sequences in the four standard
graph products.  While we could find some general results or bounds
for Grundy domination numbers in all four graph products, we
nevertheless focused mainly on the products of paths and/or cycles.
(As noted above, obtaining the exact formulas for the standard
domination number of these relatively simple classes of graphs has
turned out to be rather non-trivial.) For each of the four products
we consider the Grundy domination number of {\em grids} (the
products of paths), {\em cylinders} (the products of paths by
cycles), and {\em tori} (the products of cycles) and proceed as
follows. In the next section we introduce concepts and terminology
needed, and present two general upper bounds on the Grundy
domination number which will be applied later and could be of
independent interest. In particular, the Grundy domination number is bounded by the edge clique number. In the subsequent four sections we
respectively consider the Grundy domination number of the four
standard graph products. 

In Section~\ref{sec:Cartesian} we first prove that the Grundy domination number of two graphs is bounded from below by the product of the Grundy domination number of one factor and the order of the other factor. We then show that this bound is attained in all grids, cylinders and tori, with the sole exception of the Cartesian product of two cycles of the same odd length. In addition, certain isoperimetric inequalities enable us to determine the Grundy domination numbers of certain multiple Cartesian products. In the case of the lexicographic product (see Section~\ref{sec:lex}), we give a formula for the Grundy domination number of the product of two arbitrary graphs as a function of all dominating sequences of the first factor. This yields exact values of the Grundy domination numbers of products of paths and/or cycles. In the main part of Section~\ref{sec:direct} we consider lower bounds on the Grundy domination number of direct products of graphs. We also give an upper bound for the products of two paths. These results yield an exact value in the case when a shortest path is of even order. The main theme of the section on the strong product (Section~\ref{sec:strong}) is the following conjecture posed for arbitrary graphs $G$ and $H$:  
$$\gamma_{gr}(G\boxtimes H)= \gamma_{gr}(G)\gamma_{gr}(H)\,.$$
While $\gamma_{gr}(G\boxtimes H)\ge \gamma_{gr}(G)\gamma_{gr}(H)$ is easily seen to be true, we prove the reversed inequality if one of the factors is a caterpillar.

\section{Preliminaries} \label{sec:prelim}

If $S=(v_1,\ldots,v_k)$ is a sequence of distinct vertices of a graph $G$, then the corresponding set $\{v_1,\ldots,v_k\}$ will be denoted by $\widehat{S}$. The initial segment $(v_1,\dots,v_i)$ of $S$ will be denoted by $S_i$. Each vertex $u\in N[v_i] \setminus \bigcup_{j=1}^{i-1}N[v_j]$
is called a {\em private neighbor} of $v_i$ with respect to $\{v_1,\ldots,v_{i}\}$. We will also use a more suggestive term by saying that $v_i$ {\em footprints} the vertices from $N[v_i] \setminus \bigcup_{j=1}^{i-1}N[v_j]$, and that $v_i$ is the {\em footprinter} of any $u\in N[v_i] \setminus \bigcup_{j=1}^{i-1}N[v_j]$. For a dominating sequence $S$ any vertex in $V(G)$ has a unique footprinter in $\widehat{S}$. Thus the function $f_S:V(G)\to \widehat{S}$ that maps each vertex to its footprinter is well defined.

Arising from the total domination number of graph, a related invariant was introduced recently in~\cite{bhr-2016}, which is defined on graphs $G$ with no isolated vertices.
A sequence $S=(v_1,\ldots,v_k)$ of distinct vertices of $G$, is called a {\em legal open neighborhood sequence} if $N(v_i) \setminus \bigcup_{j=1}^{i-1}N(v_j) \ne\emptyset$, holds for every $i\in\{2,\ldots,k\}$.
If, in addition, $\widehat{S}$ is a total dominating set of $G$, then we call $S$ a \emph{total dominating sequence} of $G$. (The meaning of {\em footprinting} in the context of total dominating sequences is analogous as above, and should be clear.)
The maximum length of a total dominating sequence in $G$ is called the \emph{Grundy total domination number} of $G$ and is denoted by $\ggr^t(G)$. The corresponding sequence is called a \emph{Grundy total dominating sequence} of $G$.

Recall that for all of the standard graph products, the vertex set of the product of graphs $G$ and $H$ is equal to $V(G)\times V(H)$,
while their edge-sets are as follows. In the \emph{lexicographic product} $G\circ H$ (also denoted by $G[H]$), vertices
$(g_{1},h_{1})$ and $(g_{2},h_{2})$ are adjacent if either $g_{1}g_{2}\in E(G)$ or ($g_{1}=g_{2}$ and $h_{1}h_{2}\in E(H)$).
In the \emph{strong product} $G\boxtimes H$ vertices $(g_{1},h_{1})$ and $(g_{2},h_{2})$ are adjacent whenever
($g_1g_2\in E(G)$ and $h_1=h_2$) or ($g_1=g_2$ and $h_1h_2\in E(H)$) or ($g_1g_2\in E(G)$ and $h_1h_2\in E(H)$).
In the \emph{direct product} $G\times H$ vertices $(g_{1},h_{1})$ and $(g_{2},h_{2})$ are adjacent when
$g_1g_2\in E(G)$ and $h_1h_2\in E(H)$.
Finally, in the \emph{Cartesian product} $G\cp H$ vertices $(g_{1},h_{1})$ and $(g_{2},h_{2})$ are adjacent when
($g_1g_2\in E(G)$ and $h_1=h_2$) or ($g_1=g_2$ and $h_1h_2\in E(H)$).
Hence in general we have $E(G\cp H)\subseteq E(G\boxtimes H)\subseteq E(G\circ H)$, and $E(G\times H)\subseteq E(G\boxtimes H)$,
$E(G\times H)\cap E(G\cp H)=\emptyset$, while $E(G\times H)\cup E(G\cp H)=E(G\boxtimes H)$.
With the exception of the lexicographic product, the standard graph products are commutative. For this and other properties of the standard products see~\cite{hik-2011}.

Let $G$ and $H$ be graphs and $*$  be one of the four graph products
under consideration. For a vertex $h\in V(H)$, we call the set
$G^{h}=\{(g,h)\in V(G * H):\ g\in V(G)\}$ a $G$-\emph{layer} or a
{\em row} of $G * H$. By abuse of notation we will also consider
$G^{h}$ as the corresponding induced subgraph. Clearly $G^{h}$ is
isomorphic to $G$ unless $*$ is the direct product in which case it
is an edgeless graph of order $|V(G)|$. For $g\in V(G)$, the
$H$-\emph{layer} or the {\em column} $^g\!H$ is defined as $^g\!H
=\{(g,h)\in V(G * H):\ h\in V(H)\}$. We may again consider $^g\!H$
as an induced subgraph when appropriate. The map $p_{G}:V(G *
H)\rightarrow V(G)$, $p_{G}(g,h) = g$, is the \emph{projection} onto
$G$ and $p_{H}:V(G * H)\rightarrow V(H)$, $p_{H}(g,h) = h$, is the
\emph{projection} onto $H$. We say that $G * H$ is \emph{nontrivial}
if both factors are graphs on at least two vertices.

We now give two upper bounds on the Grundy domination number of
arbitrary graphs. If $G$ is a graph and $S \subseteq V(G)$, then the
{\it boundary} $\partial{S}$ of $S$ is defined with
$$\partial{S}=\{u \in V(G)\setminus S:\ u \textrm{ has a neighbor in } S\}.$$ 
The first upper bounds reads as follows.

\begin{lemma}
\label{boundary} For a graph $G$ the inequality $\gamma_{gr}(G)\le |V|-k$ holds if and only if every Grundy dominating sequence has an initial segment $S$ such that $|\partial{S}| \ge k$. In particular, if there exists an $m\le |V(G)|$ such that for any $m$-subset $A$ of $V(G)$ we have $|\partial{A} | \ge k$, then $\gamma_{gr}(G)\le |V|-k$ holds.
\end{lemma}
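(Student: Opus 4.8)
The plan is to exploit the footprinting interpretation from the preliminaries. For a legal sequence $T=(v_1,\dots,v_\ell)$ and an initial segment $S_i$ with vertex set $A_i=\widehat{S_i}$, the set of vertices footprinted by $S_i$ is exactly the closed neighborhood $N[A_i]=\bigcup_{j\le i}N[v_j]$, and this is the disjoint union of $A_i$ and the boundary $\partial A_i$. Hence the first thing I would record is the identity
$$|\partial A_i| = |N[A_i]| - i.$$
The second basic observation I would establish is monotonicity: legality forces $v_{i+1}$ to footprint at least one vertex outside $N[A_i]$, so $|N[A_{i+1}]|\ge |N[A_i]|+1$, and therefore $|\partial A_{i+1}|\ge|\partial A_i|$. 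Thus along any legal sequence the boundary sizes of the initial segments form a non-decreasing sequence, whose maximum is attained at the full sequence.

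Next I would specialize to a Grundy dominating sequence $T$ of length $\gamma_{gr}(G)$. Since $\widehat T$ dominates $G$, we have $N[\widehat T]=V$, so the identity gives $|\partial \widehat T| = |V|-\gamma_{gr}(G)$, and by monotonicity this is the largest boundary attained by any initial segment of $T$. The biconditional then follows. If $\gamma_{gr}(G)\le |V|-k$, the full sequence (an initial segment of itself) already satisfies $|\partial \widehat T|=|V|-\gamma_{gr}(G)\ge k$, and this holds for every Grundy dominating sequence. Conversely, if some initial segment $S$ of a Grundy dominating sequence $T$ has $|\partial S|\ge k$, then $|V|-\gamma_{gr}(G)=|\partial\widehat T|\ge|\partial S|\ge k$, giving $\gamma_{gr}(G)\le|V|-k$.

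For the ``in particular'' statement I would show that every Grundy dominating sequence $T$ of length $\ell=\gamma_{gr}(G)$ admits an $m$-subset in the role of $S$, and then invoke the biconditional. If $\ell\ge m$, the initial segment $\{v_1,\dots,v_m\}$ is an $m$-subset, so the hypothesis yields $|\partial\{v_1,\dots,v_m\}|\ge k$ and monotonicity gives $|V|-\ell\ge k$. If $\ell<m$, then because $m\le|V|$ I can enlarge $\widehat T$ to an $m$-set $A$ by adjoining $m-\ell$ vertices of the complement; as $A\supseteq\widehat T$ still dominates $G$, its boundary is $V\setminus A$, so the hypothesis forces $|V|-m\ge k$ and hence $\ell<m\le|V|-k$. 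In either case $\gamma_{gr}(G)\le|V|-k$.

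The one place where any genuine idea is needed is the identity $|\partial A_i|=|N[A_i]|-i$ together with the monotonicity of $|\partial A_i|$ along a legal sequence; everything else is bookkeeping. The only technical wrinkle I anticipate is the case $\ell<m$ in the final part, where an $m$-subset cannot be read directly off the sequence and one must instead pad the dominating set $\widehat T$. I expect this to be the sole step requiring a moment of care, although it is routine once noticed.
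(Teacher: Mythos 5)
Your proof is correct and takes essentially the same route as the paper's: both rest on the identity $|S|+|\partial S|=|V(G)|$ for a dominating set $S$ and on the fact that the boundary size of the initial segments is non-decreasing along a legal sequence. The only difference is one of completeness: you derive the monotonicity explicitly (via $|\partial \widehat{S_i}|=|N[\widehat{S_i}]|-i$) and you handle the ``in particular'' clause in detail, including the case $\gamma_{gr}(G)<m$ where the dominating set must be padded to an $m$-subset, steps the paper leaves implicit.
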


\begin{proof}
If $\gamma_{gr}(G)\le |V|-k$ holds, then for the set $S$ of all the elements of any Grundy dominating sequence we must have $|\partial{S}|\ge k$. The other direction follows from the fact that the size of the boundary of the initial segments is non-decreasing and obviously for the set $S$ of all the elements of any Grundy dominating sequence we have $|S|+|\partial{S}|=|V(G)|$. \qed
\end{proof}

We follow with the next general upper bound, which will be used in
Section~\ref{con:strong}  in the context of strong products of
graphs. Let $G$ be a graph. If ${\cal{C}}=\{ Q_1,\ldots , Q_r\}$ is
a set of cliques of $G$ such that any edge of $G$ is contained in a
clique of $\cal{C}$, then $C$ is called an {\it edge clique cover}
of $G$. The size of a smallest edge clique cover of $G$ is the {\it
edge clique cover number} of $G$ and denoted by $\theta_e(G).$ It is
well-known that $\theta_e(G)$ is equal to the intersection number of
$G$, cf.~\cite{egp-66,mcmc-99}.

\begin{proposition}
\label{prp:edge_cover}
If $G$ is a graph without isolated vertices, then $\ggr(G) \leq \theta_e(G).$
\end{proposition}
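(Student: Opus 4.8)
The plan is to construct an injection from the vertices of a Grundy dominating sequence into the cliques of a minimum edge clique cover, which immediately yields $\ggr(G) \le \theta_e(G)$. Fix an edge clique cover $\mathcal{C} = \{Q_1, \ldots, Q_r\}$ with $r = \theta_e(G)$ and a Grundy dominating sequence $S = (v_1, \ldots, v_k)$, so that $k = \ggr(G)$. For each $i$, the legality of $S$ guarantees a footprinted vertex $u_i \in N[v_i] \setminus \bigcup_{j<i} N[v_j]$; I fix one such $u_i$ for every $i$.

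First I would define an assignment $g \colon \widehat{S} \to \mathcal{C}$. If $u_i \ne v_i$, then $v_i u_i$ is an edge of $G$, and I let $g(v_i)$ be any clique of $\mathcal{C}$ containing it. If $u_i = v_i$ (the case where $v_i$ footprints only itself), then $v_i$ has a neighbor $z$ because $G$ has no isolated vertices, and I let $g(v_i)$ be any clique of $\mathcal{C}$ containing the edge $v_i z$. In either case both $v_i$ and $u_i$ lie in the chosen clique $g(v_i)$, so the invariant $\{v_i, u_i\} \subseteq g(v_i)$ holds throughout. This is the only place where the hypothesis on isolated vertices is used, and it is precisely what lets the self-footprinting case be absorbed into the same scheme as the generic case.

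The heart of the argument is the injectivity of $g$. Suppose $g(v_i) = g(v_j) = Q$ for some $i < j$. By the invariant above, $v_i \in Q$ and $u_j \in Q$; since $Q$ is a clique, these two vertices are equal or adjacent, so $u_j \in N[v_i]$. This contradicts the fact that $u_j$ is footprinted by $v_j$: by definition $u_j \notin \bigcup_{l<j} N[v_l]$, and as $i < j$ this forces $u_j \notin N[v_i]$. Hence no two distinct vertices of $\widehat{S}$ map to the same clique, $g$ is injective, and $k = |\widehat{S}| \le |\mathcal{C}| = \theta_e(G)$.

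The only delicate point is the bookkeeping in the self-footprinting case: one must ensure that the footprinted vertex itself, and not merely the auxiliary neighbor $z$, belongs to $g(v_i)$, since the injectivity step invokes the clique membership of $u_j$ rather than of any neighbor. Once the assignment is arranged so that $\{v_i, u_i\} \subseteq g(v_i)$ always holds, the contradiction is immediate and no further case distinction is required.
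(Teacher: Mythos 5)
Your proof is correct and takes essentially the same approach as the paper: both assign to each vertex $v_i$ of the sequence a clique of the cover containing the edge from $v_i$ to its footprinted vertex (or to an arbitrary neighbor in the self-footprinting case, which is exactly where the no-isolated-vertices hypothesis enters). The paper phrases the counting as ``at each step some clique becomes completely dominated for the first time,'' whereas you prove injectivity of the assignment directly from the footprinting condition, but this is the same argument in different words.
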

\proof Let $\cal{Q}$ be a minimum edge clique cover of $G$ and let
$S=(d_1,\ldots , d_p)$  be a Grundy dominating sequence in $G$,
where $p=\ggr(G).$ We claim that at each term of the sequence $S$ at
least one clique of $\cal{Q}$ becomes completely dominated. Since
$d_i$ is a legal choice, at least one vertex from $N[d_i]$ is not
dominated by $\widehat{S_{i-1}}.$ If $d_i$ footprints itself, let
$x$ be an arbitrary neighbor of $d_i$. (Note that $x$ exists because
$G$ has no isolated vertices.) And if $d_i$ footprints some other
vertex, let $x$ be this vertex. In either of the cases the edge
$d_ix$ lies in a clique $Q \in \cal{Q}$ that is not yet completely
dominated. It follows that  after $d_i$ is selected, all the
vertices of $Q$ are dominated. Consequently, $\cal{Q}$ contains at
least $p$ cliques, that is $\theta_e(G)=|{\cal{Q}}| \geq p=\ggr(G).$
\qed

The bound in Proposition~\ref{prp:edge_cover} is sharp,  as
demonstrated by complete graphs $K_n$ and paths $P_n$. Moreover,
within the class of trees, the above bound  is sharp precisely in
caterpillars (i.e., the trees in which there is no vertex having
more than two non-leaf neighbors). Indeed, note that in bipartite
graphs $\theta_e(G)$ equals the number of edges, and, as mentioned
in~\cite{bgmrr-2014}, in any caterpillar $G$ we have
$\ggr(G)=|V(G)|-1$. On the other hand, \cite[Lemma 2.4]{bgmrr-2014}
implies that if there exists a vertex in a tree with more than two
non-leaf neighbors, then $\ggr(G)<|V(G)|-1$. Hence, caterpillars are
really the only trees that enjoy the equality in the bound from
Proposition~\ref{prp:edge_cover}.

\section{Cartesian product} 
\label{sec:Cartesian}

In this section we determine the Grundy domination number of grids,
cylinders and tori (with respect to the Cartesian product) and use
isoperimetric inequalities to determine the Grundy domination number
of some   products of several paths and cycles.  We begin with the
following general lower bound.

\begin{proposition}
\label{prp:cart}
For any two graphs $G$ and $H$, $$\gamma_{gr}(G\cp H)\ge \max\{\gamma_{gr}(G)|V(H)|,\gamma_{gr}(H)|V(G)|\}.$$
\end{proposition}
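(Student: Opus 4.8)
The plan is to use the commutativity of the Cartesian product and prove only $\gamma_{gr}(G\cp H)\ge \gamma_{gr}(G)\,|V(H)|$, the bound with the roles of $G$ and $H$ exchanged being symmetric. Since every maximal legal sequence is dominating (an undominated vertex could always be appended, footprinting itself), it suffices to exhibit a \emph{legal} sequence in $G\cp H$ of length $\gamma_{gr}(G)\,|V(H)|$; its length is then a lower bound for $\gamma_{gr}(G\cp H)$. I would fix a Grundy dominating sequence $S=(g_1,\dots,g_k)$ of $G$ with $k=\gamma_{gr}(G)$ and, for each $i$, a private neighbor $u_i\in N[g_i]\setminus\bigcup_{j<i}N[g_j]$. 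These $u_i$ are pairwise distinct and, since $g_j\in N[g_j]$, none of them lies in $\{g_1,\dots,g_{i-1}\}$. Everything hinges on the shape of closed neighborhoods in the Cartesian product, $N_{G\cp H}[(g,h)]=(N[g]\times\{h\})\cup(\{g\}\times N[h])$, so that the only edges leaving the layer $G^{h}$ are the vertical ones joining $(g,h)$ to $(g,h')$ with $hh'\in E(H)$.

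The construction I would try selects exactly the $k\,|V(H)|$ vertices of $\widehat S\times V(H)$, grouped by layers: fix an ordering $h_1,\dots,h_n$ of $V(H)$ and output $(g_1,h_t),\dots,(g_k,h_t)$ for $t=1,\dots,n$ in turn. To prove legality I would show that when $(g_i,h_t)$ is reached it footprints $(u_i,h_t)$. The horizontal half of this check is a verbatim transcription of the footprinting inside $G$: for $j<i$ the vertex $(g_j,h_t)$ dominates $(u_i,h_t)$ only if $u_i\in N[g_j]$, which is excluded because $u_i$ is a private neighbor of $g_i$. Hence no earlier vertex of the \emph{current} layer can have dominated $(u_i,h_t)$, and this part of the argument is routine.

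The delicate point — and the step I expect to be the genuine obstacle — is ruling out that $(u_i,h_t)$ was already dominated along a vertical edge by a vertex of a previously processed layer $G^{h_s}$ with $h_s\in N(h_t)$. Such a vertex dominates $(u_i,h_t)$ exactly when it equals $(u_i,h_s)$, and $(u_i,h_s)$ belongs to the sequence precisely when $u_i\in\widehat S$. Thus the argument passes immediately whenever the private neighbors can be chosen with $u_i\notin\widehat S$, and more generally whenever one can arrange $u_i\ne g_i$ for every $i$ together with a convenient layer ordering. The crux is therefore the treatment of terms that can only footprint themselves (a situation forced, for instance, by the first term of $S$): for such a term one cannot use a horizontal private neighbor and must instead produce the footprint inside the vertical layer ${}^{g_i}H$, ensuring that $(g_i,h_t)$ footprints some $(g_i,h')$ with $h'\in N[h_t]$ still undominated. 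Here I would want to process the layers in the order of a legal sequence of $H$ so that this vertical footprint exists at every step. Making the two mechanisms cooperate — horizontal footprinting for ordinary terms and vertical footprinting for self‑footprinting terms — is the part that requires real care, and I would expect the cleanest route to be an induction that tracks the dominated set after each completed layer.
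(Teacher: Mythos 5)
Your construction is not the paper's, and the difference matters. The paper orders the product sequence \emph{column by column}: all of $\{d_i\}\times V(H)$ is listed consecutively, with the columns taken in the order of a Grundy dominating sequence $(d_1,\dots,d_m)$ of $G$, and $(d_i,h_j)$ is claimed to footprint $(g_i,h_j)$, where $g_i$ is a vertex footprinted by $d_i$ in $G$. With that grouping the verification needs only $g_i\neq d_i$: a footprint that is a \emph{later} term of the sequence is harmless, because its column has not yet been processed. Your layer-by-layer grouping needs the much stronger condition $u_i\notin\widehat S$, which is unachievable already for $G=P_l$, $l\ge 3$: a Grundy dominating sequence of $P_l$ has $l-1$ pairwise distinct private neighbors, but only one vertex of $P_l$ lies outside $\widehat S$. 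Moreover, your remark that the case $u_i\in\widehat S\setminus\{g_i\}$ could be saved by ``a convenient layer ordering'' is false: once $u_i\in\widehat S$, the vertex $(u_i,h_s)$ is a term of every already processed layer, so it dominates $(u_i,h_t)$ whenever $h_s\in N(h_t)$, and avoiding this for all $t$ would force every $h_t$ to have no neighbor among $h_1,\dots,h_{t-1}$, i.e. $H$ edgeless. So your horizontal mechanism alone cannot even recover the paper's intended applications (grids, cylinders, tori); everything is pushed onto your unresolved ``vertical'' mechanism.

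That unresolved case is indeed the crux, and it cannot be repaired, because the proposition as stated is false. Your vertical fallback would need the full layer order $h_1,\dots,h_n$ to be a legal sequence of $H$, i.e. $\ggr(H)=|V(H)|$, which generally fails; and the paper's own proof hides exactly the same gap, since its footprinting claim breaks when $d_i$ footprints only itself (then $g_i=d_i$ and $(d_i,h_j)$ is dominated by $(d_i,h_{j'})$ for any earlier $h_{j'}\in N(h_j)$). A concrete counterexample to the statement is $G=K_{1,3}$, $H=K_2$: here $\ggr(K_{1,3})=3$ (order the three leaves; the last two footprint only themselves), so the claimed bound is $6$, yet $\ggr(K_{1,3}\cp K_2)=5$. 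Indeed, in this $8$-vertex product every closed neighborhood has size $3$ or $5$, so a legal sequence of length $6$ would have to start with a leaf-type vertex $(a,1)$ footprinting exactly $3$ vertices and continue with terms footprinting exactly one new vertex each; the second term is then forced to be the matched leaf $(a,2)$ (footprinting only $(c,2)$), after which each of the six unused vertices footprints exactly two undominated vertices --- a contradiction. What the paper's grouping actually proves is the weaker statement that $\ggr(G\cp H)\ge m\,|V(H)|$ whenever $G$ has a legal dominating sequence of length $m$ in which every term footprints some vertex other than itself; paths and cycles admit such Grundy sequences, e.g. $(v_1,\dots,v_{l-1})$ in $P_l$ and $(v_1,\dots,v_{k-2})$ in $C_k$, so all later uses of the proposition survive, but neither your argument nor the paper's establishes the inequality for ``any two graphs'', and no argument can.
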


\proof
Set $V(H)=\{h_1,\ldots,h_k\}$.
Let $m=\gamma_{gr}(G)$ and let $(d_1,\ldots,d_m)$ be a Grundy dominating sequence in $G$.
Observe that $$((d_1,h_1),\ldots,(d_1,h_k),(d_2,h_1),\ldots,(d_2,h_k),\ldots,(d_m,h_1),\ldots,(d_m,h_k))$$
is a dominating sequence in $G\cp H$. Indeed, $(d_i,h_j)$ footprints $(g_i,h_j)$, where $g_i$ is footprinted by $d_i$ in $G$. Hence $\gamma_{gr}(G\cp H)\ge mk=\gamma_{gr}(G)|V(H)|$.
By reversing the roles of $G$ and $H$ the statement follows. 
\qed

It is natural to ask whether in the inequality in Proposition~\ref{prp:cart} always equality holds. To see that this need not be the case consider the Cartesian product of complete graphs. Indeed,  let $V(K_n)=\{a_1,\ldots , a_n\}$, $V(K_{m})=\{b_1,\ldots , b_m\}$ and $m,n \geq 3$. Then
$$((a_1,b_1),(a_2,b_1),\ldots , (a_{n-1},b_1),(a_1,b_2),(a_1,b_3),\ldots ,(a_1,b_m))$$
is a dominating sequence of length $n+m-2$, which in turn implies that $$\max\{\gamma_{gr}(K_n)|V(K_m)|, \gamma_{gr}(K_m)|V(K_n)|\}=\max\{m,n\} < n+m-2 \leq \ggr(K_n \cp K_m)\,.$$ This example actually shows that the left-hand side of the inequality of Proposition~\ref{prp:cart} can be arbitrary larger that the right-hand side of it.

\begin{theorem}
\label{cartesiangrid}
Let $P_k$ be the path and $C_k$ be the cycle on $k$ vertices. Then we have

\begin{enumerate}
\item[(a)] $\ggr(P_k\cp P_l)=k(l-1)$, if  $2\le k\le l$;
\item[(b)] $\ggr(P_k \cp C_l)=\max \{l(k-1), k(l-2)\}$, if $2 \le k$ and $3 \le l$;
\item[(c)]$ \ggr(C_k\cp C_l)=k(l-2)$, if  $3\le k\le l$ and $(k,l)\neq (2t+1,2t+1)$ for some $1 \le t$;
\item[(d)] $\ggr(C_k\cp C_k)=k(k-2)+1$ if $k$ is odd.
\end{enumerate}
\end{theorem}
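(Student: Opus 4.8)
The plan is to read off the lower bounds in (a)--(c) directly from Proposition~\ref{prp:cart}, using the two one-dimensional values $\ggr(P_n)=n-1$ (a path is a caterpillar, so this is the caterpillar equality noted after Proposition~\ref{prp:edge_cover}) and $\ggr(C_n)=n-2$ (an easy self-footprinting argument: footprint all but two vertices of the cycle one at a time). Substituting into Proposition~\ref{prp:cart} and using $k\le l$ gives $\max\{(k-1)l,(l-1)k\}=k(l-1)$ for (a), $\max\{(k-1)l,(l-2)k\}=\max\{l(k-1),k(l-2)\}$ for (b), and $\max\{(k-2)l,(l-2)k\}=k(l-2)$ for (c), so no separate lower-bound construction is needed there. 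For (d) the value exceeds the Proposition~\ref{prp:cart} bound by one, so I would exhibit an explicit legal dominating sequence of length $(k-1)^2=k(k-2)+1$ on $C_k\cp C_k$ with $k$ odd; the target is a sequence whose complement is only $2k-1$ vertices, arranged as two almost-full columns with one cell shifted diagonally, a configuration that stays legal precisely because $k$ is odd.

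For all the upper bounds I would invoke the ``in particular'' part of Lemma~\ref{boundary}. Writing $c$ for the target complement size ($c=k$ for grids, $c=\min\{l,2k\}$ for cylinders, $c=2k$ for tori, so that $|V|-c$ equals the claimed value), it suffices to exhibit a single threshold size $m_0$ for which \emph{every} $m_0$-subset $A$ satisfies $|\partial A|\ge c$; note that this automatically forces $m_0\le |V|-c$, since $\partial A$ lies in the complement of $A$. Lemma~\ref{boundary} then yields $\ggr\le |V|-c$, matching the lower bound. This reduces each of (a)--(c) to a purely vertex-isoperimetric statement about the grid, cylinder, and torus, decoupled from the dynamics of legal sequences.

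The heart of the work is establishing those isoperimetric bounds by analysing the minimum-boundary profile $b(m)=\min_{|A|=m}|\partial A|$. On the grid the extremal sets are corner staircases; $b(m)$ rises with $m$ until the staircase front spans all $k$ rows (this happens already at $m_0=\binom{k}{2}\le k(l-1)$, where the front is the full anti-diagonal of size $k$), and it stays $\ge k$ throughout a wide middle band, so a valid $m_0$ with $b(m_0)\ge k$ exists. On the cylinder a separating front must either close into a full $C_l$-circle (cost $l$) or cross the $P_k$-direction twice (cost $2k$), giving $b(m)\ge\min\{l,2k\}$ in the bulk; on the torus both directions are cyclic, so a separating front must wrap around and costs at least $2k$. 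I would prove these either by a compression/shifting argument in the spirit of the standard vertex-isoperimetric inequalities or by citing them directly, together with a check that the middle band where $b(m)\ge c$ meets the admissible range $[1,|V|-c]$.

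I expect the principal obstacle to be the odd torus (d), where the generic value $2k$ is off by one on both sides. On the lower-bound side the extra vertex cannot simply be appended to the product sequence of Proposition~\ref{prp:cart}, since that sequence is already dominating, so a genuinely different legal sequence must be designed and verified. On the upper-bound side the torus isoperimetric inequality must be sharpened to $2k-1$ for the square odd case, and one must show that the parity (non-bipartiteness of $C_k\cp C_k$ for odd $k$) forces an improvement by exactly one and no more. A secondary difficulty is purely technical: making the grid isoperimetric profile rigorous near the threshold, so that $b(m_0)\ge k$ holds with equality only at the extremal staircase, and dealing separately with the smallest values of $k$ and $l$ so that the chosen $m_0$ genuinely lies in the required range.
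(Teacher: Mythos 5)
Your lower-bound plan is fine and matches the paper: (a)--(c) follow from Proposition~\ref{prp:cart} together with $\ggr(P_n)=n-1$ and $\ggr(C_n)=n-2$, and (d) does require an explicit construction (the paper builds a legal sequence whose complement is two columns minus one vertex). The genuine gap is in your upper-bound strategy. You reduce everything to the ``in particular'' part of Lemma~\ref{boundary}: find one size $m_0$ such that \emph{every} $m_0$-subset has boundary at least $c$, ``decoupled from the dynamics of legal sequences.'' For the torus with $k=l$ even --- a case of (c), where necessarily $c=2k$ --- no such $m_0$ exists, so this reduction provably cannot work, regardless of how strong an isoperimetric inequality you prove or cite. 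Indeed, write $k=2t$ and let $B_r$ be an $\ell_1$-ball of radius $r$ in $C_k\cp C_k$; its boundary is the sphere $S_{r+1}$, and because of wrap-around every sphere satisfies $|S_d|\le 4t-2=2k-2$ (the largest one is $S_t$). So every ball has boundary at most $2k-2$; moreover, growing a nested family from a single vertex through the balls, adding each sphere as a cyclically consecutive arc, one obtains a set of \emph{every} size $1\le m_0\le k^2$ with boundary at most $2k-1$: for instance $B_{t-1}$ together with an arc of $j$ consecutive vertices of $S_t$ has boundary $(|S_t|-j)+(j+1)=4t-1=2k-1$, and all other stages are smaller still. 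Hence at every size some set has boundary $<2k$, and the ``in particular'' part of Lemma~\ref{boundary} can never certify $\ggr(C_k\cp C_k)\le k(k-2)$. What saves the paper is precisely the information you discard: initial segments of a legal sequence are not arbitrary sets. In the paper's Case~II, an initial segment whose boundary is only $2k-1$ is shown to be confined to the diamond $T=\{(x,y):|x|+|y|\le t-1\}$; since $|T|<k(k-2)$, a long sequence must eventually leave $T$, and at that first exit legality forces the boundary up to $2k$. This argument is intrinsically dynamic, resting on the monotonicity of the boundary along legal sequences and on which vertices are still legal choices.

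Two further problems. First, even where a threshold size does exist, the isoperimetric inequalities you would need are not citable: Theorem~\ref{iso1} (Bollob\'as--Leader) applies only to products of \emph{even} cycles and Theorem~\ref{iso2} (Riordan) to products of paths; there is no published analogue for cylinders $P_k\cp C_l$ or for tori with an odd cycle, and compression techniques are known to break down for odd cycles --- this is exactly why the paper uses these theorems only for the later proposition on products of several paths or several even cycles, and proves Theorem~\ref{cartesiangrid} itself by the sequence argument. Establishing such inequalities from scratch would be a separate research project, not a routine step. Second, you locate the main obstacle in (d), claiming the torus inequality must be ``sharpened to $2k-1$'' there; in the paper it is the opposite. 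The bound $2k-1$ for the odd square torus is the easy part: at the first index $i$ at which some column contains $k-1$ vertices of $\widehat{S_i}$, every row meets $\partial\widehat{S_i}$ in at least two vertices except one row contributing at least one, giving $|\partial\widehat{S_i}|\ge 2k-1$ at once. The hard case is (c) with $k=l$ even, where $2k-1$ must be improved to $2k$ --- which is exactly the case your approach cannot reach.
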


\proof
The fact that the left-hand side in \textbf{(a)}, \textbf{(b)} and \textbf{(c)} is at least the right-hand side is a consequence of Proposition $\ref{prp:cart}$ and the facts that $\ggr(P_l)=l-1$ and $\ggr(C_k)=k-2$.

To prove the same for \textbf{(d)} we construct a  dominating sequence of length $k(k-2)+1$ in $C_k \cp C_k$ for odd $k$. We do it in two steps.

Suppose that $k=2t+1$. Let $V(C_k \cp C_k)=\{(x,y) \in \mathbb{Z}^2 : |x|,|y| \le t\}$ and $E(C_k \cp C_k)=\{\{(x,y),(x',y')\}: |x-x'|+|y-y'|=1, \ \textrm{or} \ x=x' \ \textrm{and} \ y=-y'=t, \ \textrm{or} \ y=y' \ \textrm{and} \ x=-x'=t \}$.

Set $$A_1:=\{(x,y) \in V(C_k \cp C_k) : |x-1/2|+|y| \le t-1/2\}$$ with the following ordering: for $(x,y),(x',y') \in A_1,$ $(x,y) \prec_1 (x',y')$ if and only if

\begin{itemize}
\item $|y| < |y'|$, or
\item $|y|=|y'|$ and $y' < 0 < y$, or
\item $y=y'$ and $x < x'$.
\end{itemize}

It is obvious that $\prec_1$ is a total (and so a well-)ordering of  $A_1$. We choose the vertices of $A_1$ into our dominating sequence in the order $\prec_1$. As $(x,y) \in A_1$ will dominate the undominated vertex $(x,y+1)$ if $y \ge 0$ and $(x,y-1)$ if $y < 0$, each element of $A_1$ will be a legal choice in this order.

In the next step we consider the set
$$A_2:=\{(x,y)\in V(C_k \cp C_k) : |x| < t \ \textrm{and} \ |x-1/2|+|y| > t-1/2\}$$ with the following ordering. For $(x,y),(x',y') \in A_2,$ $(x,y) \prec_2 (x',y')$ if and only if
\begin{itemize}
\item $|x| < |x'|$, or
\item $|x|=|x'|$ and $ x > 0 > x'$, or
\item $x=x'$ and $y < y'$.
\end{itemize}

One can easily check that $\prec_2$ is a total (and hence a well-)ordering of $A_2$. We will choose the elements of the dominating sequence after the elements of $A_1$ in the order $\prec_2$. Each element can be chosen into a legal sequence, since $(x,y)\in A_2$ dominates the so far undominated element $(x+1,y)$ if $x > 0$ and $(x-1,y)$ if $x \leq 0$.

We are done since the length of this dominating sequence is $k(k-2)+1$. A dominating sequence of $C_5 \cp C_5$ is constructed in Figure~\ref{fig:CtimesC}.

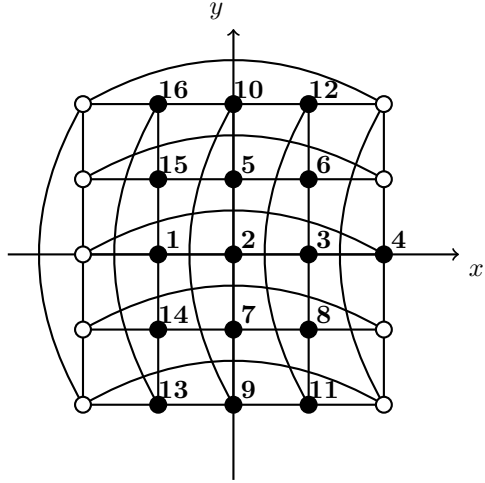
\begin{figure}[ht!]
    \begin{center}
        \begin{tikzpicture}[scale=1.0,style=thick,x=1cm,y=1cm]
        \def\vr{2.5pt} 
        \draw[thick, ->] (-3,0) -- (3,0) node[anchor = north west] {$x$};
    \draw[thick, ->] (0,-3) -- (0,3) node[anchor = south east] {$y$};

    \foreach \x  in {-2,-1,0,1,2}
    {
    \draw (\x,-2) -- (\x,2);
    \draw (\x,-2) to [bend left=30] (\x,2);
    }
    \foreach \y  in {-2,-1,0,1,2}
    {
    \draw (-2,\y) -- (2,\y);
    \draw (-2,\y) to [bend left=30] (2,\y);
    }

        \foreach \x  in {-1,0,1}
    \foreach \y  in {-2,-1,0,1,2}
        {
        \filldraw [fill=black, draw=black,thick] (\x,\y) circle (3pt);
        }
        \filldraw [fill=black, draw=black,thick] (2,0) circle (3pt);

    \foreach \y  in {-2,-1,0,1,2}
        {
        \filldraw [fill=white, draw=black,thick] (-2,\y) circle (3pt);
        }

        \foreach \y  in {-2,-1,1,2}
        {
        \filldraw [fill=white, draw=black,thick] (2,\y) circle (3pt);
        }

    \draw (-0.8,0.2) node {{\bf 1}}; \draw (0.2,0.2) node {{\bf 2}}; \draw (1.2,0.2) node {{\bf 3}};
    \draw (2.2,0.2) node {{\bf 4}};
    \draw (-0.8,1.2) node {{\bf 15}}; \draw (0.2,1.2) node {{\bf 5}}; \draw (1.2,1.2) node {{\bf 6}};
    \draw (-0.8,-0.8) node {{\bf 14}}; \draw (0.2,-0.8) node {{\bf 7}}; \draw (1.2,-0.8) node {{\bf 8}};
    \draw (-0.8,2.2) node {{\bf 16}}; \draw (0.2,2.2) node {{\bf 10}}; \draw (1.2,2.2) node {{\bf 12}};
    \draw (-0.8,-1.8) node {{\bf 13}}; \draw (0.2,-1.8) node {{\bf 9}}; \draw (1.2,-1.8) node {{\bf 11}};

        \end{tikzpicture}
    \end{center}
    \caption{A dominating sequence of $C_5 \cp C_5$}
    \label{fig:CtimesC}
\end{figure}

Now we prove that the left-hand side in \textbf{(a)}, \textbf{(b)}, \textbf{(c)}, and \textbf{(d)} is at most the right-hand side. We will use Lemma~\ref{boundary}.

To prove \textbf{(a)} let $S=((a_1,b_1),(a_2,b_2),\dots,(a_m,b_m))$ be a Grundy dominating sequence of $P_k \cp P_l$. Let $i$ be the minimum index such that either there is a column containing $l$ vertices of $\widehat{S_i}$ or a row containing $k$ vertices of $\widehat{S_i}$. If there is no such $i$, then $m \le k(l-1)$ and we are done.
Note that the two cases cannot happen simultaneously at any step $i$ since otherwise all the neighbors of $(a_i,b_i)$ and $(a_i,b_i)$ itself would already be dominated by $\widehat{S_{i-1}}$ and so $S$ would not be a legal sequence. So without loss of generality we can assume that there is horizontal segment of length $k$. Since in each column of $P_k \cp P_l$ there is at least one and at most $l-1$ elements of the set $\widehat{S_i}$, we have that $|\partial \widehat{S_i}| \ge k$, which---by Lemma \ref{boundary}---proves~\textbf{(a)}.

To prove \textbf{(b)} let $S=((a_1,b_1),(a_2,b_2),\dots,(a_m,b_m))$ be a Grundy dominating sequence of $P_k \cp C_l$.  Let $i$ be the minimum index such that either there is a column containing $l-1$ vertices of $\widehat{S_i}$ or a row containing $k$ vertices of $\widehat{S_i}$. There exists such an $i$ as otherwise $m\le\max\{l(k-1),k(l-2)\} $ and we are done. We observe again that both cases cannot happen at the same time since otherwise all the neighbors of $(a_i,b_i)$ and $(a_i,b_i)$ itself would already be dominated. Suppose first that there is a column containing $l-1$ vertices of $\widehat{S_i}$. Then in each row there is an element of the boundary of $\widehat{S_i}$. By Lemma~\ref{boundary}, this means that the length of the Grundy dominating sequence is at most $l(k-1)$. Now suppose that there is column containing $k$ vertices of $\widehat{S_i}$. Then---since there is an element in each column and there is no column containing $l-1$ vertices of $\widehat{S_i}$---we have that the cardinality of the boundary of $\widehat{S_i}$ is at least $2k$. By Lemma~\ref{boundary}, the length of the Grundy dominating sequence is at most $k(l-2)$ and we are done with \textbf{(b)}.

Now we prove \textbf{(c)}.

\medskip\noindent
\textsc{Case I.} $k+1 \le l$. \\
Let $S=(a_1,b_1),(a_2,b_2),\dots,(a_m,b_m)$ be a Grundy dominating sequence of $C_k \cp C_l$.  Let $i_1$ be the minimum index such that either there is a column containing $l-1$ vertices of $\widehat{S_i}$ or there is a row containing $k-1$ vertices of $\widehat{S_i}$. Note that it cannot happen that both of these hold as then $(a_{i_1},b_{i_1})$ would not dominate any new vertex.

\medskip\noindent
\textsc{Subcase IA.} There exists a column containing $l-1$ vertices of $\widehat{S_{i_1}}$.\\
Then $|\partial \widehat{S_{i_1}} | \ge 2(l-1)\ge 2k$ and we are done by Lemma~\ref{boundary}.

\medskip\noindent
\textsc{Subcase IB1.} There exists a row $P_k^v$ containing $k-1$ vertices of $\widehat{S_{i_1}}$, and before the appearance of another such row and before the first row containing $k$ vertices appears, there exists a column $^uP_l$ containing $l-1$ vertices of $\widehat{S_{i_2}}$ for some $i_2>i_1$. \\
Then the boundary of $\widehat{S_{i_2}}$ contains $^uP_l\setminus \widehat{S_{i_2}}$, $P_k^v\setminus \widehat{S_{i_2}}$ and 2 vertices from every other row. Therefore  $|\partial \widehat{S_{i_2}}| \ge 1+1+2(l-2) \ge 2k$ holds and we are done by Lemma~\ref{boundary}.

\medskip\noindent
\textsc{Subcase IB2.} There exists a row $P_k^v$ containing $k-1$ vertices of $\widehat{S_{i_1}}$, and before the appearance of another such row and before a column $^uP_l$ containing $l-1$ vertices appears, the vertex in $P_k^v\setminus \widehat{S_{i_1}}$ becomes the $i_2$nd vertex of $S$ for some $i_2>i_1$. \\
Then the boundary of $\widehat{S_{i_2}}$ contains two vertices from every column and therefore $|\partial \widehat{S_{i_2}}|\ge 2k$ and we are done by Lemma~\ref{boundary}.

\medskip\noindent
\textsc{Subcase IB3.} There exists a row $P_k^v$ containing $k-1$ vertices of $\widehat{S_{i_1}}$, and before the appearance of a column $^uP_l$ containing $l-1$ vertices
and before the vertex in $P_k^v\setminus \widehat{S_{i_1}}$ appears in $S$, there exists another row $P_k^w$ containing $k-1$ vertices from $\widehat{S_{i_2}}$ for some $i_2>i_1$.\\
Then either every column contains a vertex from $\widehat{S_{i_2}}$ and then $|\partial \widehat{S_{i_2}}|\ge 2k$ or the vertices of $P_k^v\setminus \widehat{S_{i_2}}$ and $P_k^w\setminus \widehat{S_{i_2}}$ are in the same column. Then these two vertices are in the boundary of $\widehat{S_{i_2}}$ and two vertices from every other column belongs to $\partial \widehat{S_{i_2}}$ and thus $|\partial \widehat{S_{i_2}}|\ge 2+2(k-1)=2k$. We are done by Lemma~\ref{boundary}.

\medskip\noindent
\textsc{Case II.} $k=l$ even.\\
Let $k=2t$, $V(C_k \cp C_k)=\{(x,y) \in \mathbb{Z}^2 : -t+1 \le x,y \le t\}$ and $E(C_k \cp C_k)=\{((x,y),(x',y')): |x-x'|+|y-y'|=1, \ \textrm{or} \ x=x' \ \textrm{and} \ |y-y'|=k, \ \textrm{or} \ y=y' \ \textrm{and} \ |x-x'|=k \}$. Let $S=(a_1,b_1),(a_2,b_2),\dots,(a_m,b_m)$ be a Grundy dominating sequence of $C_k \cp C_k$ and let $i$ be the minimum index such that either a row or a column contains $k-1$ vertices from $\widehat{S_i}$. As in the previous case, there cannot be both a row and a column that contain $k-1$ vertices of $\widehat{S_i}$ as $N[(a_i,b_i)]$ would already be dominated by $\widehat{S_{i-1}}$. We can assume without loss of generality that $\widehat{S_i}$ contains $\{(0,y): |y|\le t-1\}$. As then every row contains at most $k-2$ vertices of $\widehat{S_i}$, we obtain that $\partial \widehat{S_i}$ contains 2 vertices in each row except the row $R_t=\{(x,t): -t+1 \le x \le t\}$. The row $\{(x,y):-t+1\le x\le t\}$ will be denoted by $R_y$. As $(0,t)\in \partial \widehat{S_i}$, we have $|\partial \widehat{S_i}|\ge 2k-1$. If $|\partial \widehat{S_i}|\ge 2k$ holds, then we are done by Lemma~\ref{boundary}. Otherwise, we must have $|\partial \widehat{S_i}|= 2k-1$ and $\partial \widehat{S_i} \cap R_t=\{(0,t)\}$. This implies $R_{-t+1}\cap \widehat{S_i}=\{(0,-t+1)\}$ and $R_{t-1}\cap \widehat{S_i}=\{(0,t-1)\}$. Note that if for some row $|R_u\cap \widehat{S_i}|<k-2$, then $R_u\cap \widehat{S_i}$ must be an interval as otherwise $|R_u\cap \partial \widehat{S_i}|\ge 3$ and thus $|\partial \widehat{S_i}|\ge 2k$ would hold. Also, if for some row $|R_u\cap \widehat{S_i}|<k-2$ and $R_u\cap \widehat{S_i}=\{(x,u):\alpha\le x\le \beta\}$, then $R_{u-1}\cap \widehat{S_i}\subseteq \{(x,u-1):\alpha-1\le x\le \beta+1\}$ and $R_{u+1}\cap \widehat{S_i}\subseteq \{(x,u+1):\alpha-1\le x\le \beta+1\}$ hold as otherwise $|R_u\cap \partial \widehat{S_i}|\ge 3$ and thus $|\partial \widehat{S_i}|\ge 2k$ would hold.

From the above it follows that $\widehat{S_i}\subseteq T:=\{(x,y) : |x|+|y| \le t-1\}$ holds, see Figure~\ref{fig:C6timesC6}. However the number of vertices in $T$ is less than $k(k-2)$, so there must exist a minimal $j$ such that $\widehat{S_j}$ contains a vertex outside of $T$. An esay case analysis shows that $|\partial \widehat{S_j}| \ge 2k$ holds and therefore we are done by Lemma~\ref{boundary}.

\begin{figure}[ht!]
    \begin{center}
        \begin{tikzpicture}[scale=1.0,style=thick,x=1cm,y=1cm]
        \def\vr{2.5pt} 
        \draw[thick, ->] (-3,0) -- (4,0) node[anchor = north west] {$x$};
    \draw[thick, ->] (0,-3) -- (0,4) node[anchor = south east] {$y$};

    \foreach \x  in {-2,-1,0,1,2,3}
    {
    \draw (\x,-2) -- (\x,3);
    \draw (\x,-2) to [bend left=20] (\x,3);
    }
    \foreach \y  in {-2,-1,0,1,2,3}
    {
    \draw (-2,\y) -- (3,\y);
    \draw (-2,\y) to [bend left=20] (3,\y);
    }

        \foreach \x  in {-2,-1,0,1,2}
        {
        \filldraw [fill=black, draw=black,thick] (\x,0) circle (3pt);
        }
        \foreach \x  in {-1,0,1}
        \foreach \y in {-1,1}
        {
        \filldraw [fill=black, draw=black,thick] (\x,\y) circle (3pt);
        }

        \foreach \y in {-2,2}
        {
        \filldraw [fill=black, draw=black,thick] (0,\y) circle (3pt);
        }

      \foreach \x  in {-2,-1,0,1,2,3}
        {
        \filldraw [fill=white, draw=black,thick] (\x,3) circle (3pt);
        }
        \foreach \x  in {-2,-1,1,2,3}
        \foreach \y in {-2,2}
        {
        \filldraw [fill=white, draw=black,thick] (\x,\y) circle (3pt);
        }

        \foreach \x  in {-2,2,3}
        \foreach \y in {-1,1}
        {
        \filldraw [fill=white, draw=black,thick] (\x,\y) circle (3pt);
        }
        \filldraw [fill=white, draw=black,thick] (3,0) circle (3pt);

        \end{tikzpicture}
    \end{center}
    \caption{A set $T$ in $C_6 \cp C_6$}
    \label{fig:C6timesC6}
\end{figure}

Finally, let us prove the upper bound in \textbf{(d)}. Let $S=(a_1,b_1),(a_2,b_2),\dots,(a_m,b_m)$ be a Grundy dominating sequence of $C_k \cp C_k$ for odd $k$. Let $i$ be the minimum index such that there is either a row or a column containing $k-1$ vertices from $\widehat{S_i}$. We observe again that both cannot happen at step $i$ since otherwise all the neighbors of $(a_i,b_i)$ and $(a_i,b_i)$ itself would already be dominated. So we can assume without loss of generality that there exists a column containing $k-1$ vertices from $\widehat{S_i}$. Then in each row we have at least two elements in the boundary of $\widehat{S_i}$, except one, where we have at least one element. By this we have $|\partial \widehat{S_i}| \ge 2k-1$, and the upper bound of \textbf{(d)} follows by Lemma~\ref{boundary}.
\qed

In the remainder of  this section we show how Lemma~\ref{boundary}
and isoperimetric inequalities can be applied to prove results on
the Grundy domination number of products of several paths and
products of several even cycles. For a graph $G$, a vertex $v \in
V(G)$ and natural number $r$ let us denote by $B(G,v,r)$ those
points in $G$, whose distance from $v$ is at most $r$. We will need
the following two isoperimetric theorems that are consequences of
the cited results for graphs of special vertex cardinalities.

\begin{theorem}{\rm (Bollob\'as, Leader, \cite[Theorem 8]{bl1991})}
\label{iso1}
Suppose that $H \subset C_{2k_1} \cp \cdots \cp C_{2k_n}$ and $|V(H)|=|B(C_{2k_1} \cp \cdots \cp C_{2k_n},v,r)|$ for some $v \in V(C_{2k_1}  \cp \cdots \cp C_{2k_n})$ and a natural number $r$. Then $$|\partial H| \ge |\partial{B(C_{2k_1} \cp \cdots \cp C_{2k_n},v,r)}|.$$
\end{theorem}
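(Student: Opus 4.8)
The plan is to deduce Theorem~\ref{iso1} directly from the general vertex-isoperimetric inequality of Bollob\'as and Leader \cite{bl1991}, specializing it to exactly those cardinalities that coincide with the size of a ball. First I would recall their result in the form in which it is actually proved: working in the torus $T=C_{2k_1}\cp\cdots\cp C_{2k_n}$, one has a single total order on $V(T)$ -- a compression (simplicial-type) order adapted to the graph/$\ell_1$-metric -- with the property that, for \emph{every} cardinality $a$, the initial segment $I_a$ consisting of the first $a$ vertices in this order minimizes $|\partial A|$ over all $A\subseteq V(T)$ with $|A|=a$. Equivalently, the function $a\mapsto \min_{|A|=a}|\partial A|$ is realized simultaneously by the nested family $I_1\subset I_2\subset\cdots$ produced by the compression argument.

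Second, I would match the balls to this nested family. Since $T$ is vertex-transitive, I may fix the centre $v$ and show that each ball $B(T,v,r)$ is an initial segment $I_a$ with $a=|B(T,v,r)|$. Because the Bollob\'as--Leader order lists vertices in non-decreasing distance from $v$, breaking ties inside each sphere in a prescribed way, passing from $I_{|B(T,v,r-1)|}$ to $I_{|B(T,v,r)|}$ appends precisely the sphere of radius $r$ about $v$; hence the initial segment of length $a=|B(T,v,r)|$ equals $B(T,v,r)$. Combining the two steps, for any $H\subset T$ with $|V(H)|=|B(T,v,r)|=a$ we get $|\partial H|\ge|\partial I_a|=|\partial B(T,v,r)|$, which is exactly the assertion.

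The step I expect to be the main obstacle is the identification in the second paragraph. In general the Bollob\'as--Leader minimizer is only guaranteed to be an initial segment of their order, and such a segment need not be a genuine metric ball at an \emph{arbitrary} cardinality; what must be checked is that at the special cardinalities $a=|B(T,v,r)|$ the optimal initial segment is (or at least has the same boundary size as) the true ball $B(T,v,r)$. This is precisely where the hypothesis that every cycle factor has \emph{even} length enters: for $C_{2k}$ the unique antipode and the resulting reflective symmetry make the $\ell_1$-spheres about $v$ compatible with the compression order, so that balls occur as initial segments and no boundary is lost; for odd factors this fails, consistent with the exceptional odd torus in Theorem~\ref{cartesiangrid}(d). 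I would therefore isolate this compatibility as the one lemma that genuinely requires the even-length hypothesis, and treat everything else as routine bookkeeping on top of \cite{bl1991}.
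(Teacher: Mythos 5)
The paper itself offers no proof of Theorem~\ref{iso1}: the statement is quoted, already specialized to ball cardinalities, from Theorem~8 of Bollob\'as and Leader \cite{bl1991}, and the surrounding text makes clear that the authors intend the citation to carry the entire burden. Your proposal must therefore be judged as a derivation from that source, and as such it has a genuine gap. Your first step attributes to \cite{bl1991} a statement they do not prove, namely that there is a single total order on $V(C_{2k_1}\cp\cdots\cp C_{2k_n})$ whose initial segments minimize $|\partial A|$ at \emph{every} cardinality, and you call this ``the form in which it is actually proved.'' It is not. Whatever the precise formulation of their torus theorem (a statement at ball cardinalities, or an equivalent Harper-type neighbourhood/fractional-radius bound), it is essentially the very inequality you are asked to prove: their compressions reduce an arbitrary set to a fully compressed one, which is then compared with a ball \emph{directly}, and this comparison is exactly where the hypothesis $|A|=|B(v,r)|$ enters; no universal nested extremal family for the torus comes out of the argument. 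The existence of such a nested (totally ordered) family on the even discrete torus is a strictly stronger theorem --- it is the kind of strengthening that is the subject of the later paper of Riordan \cite{riordan-1998}, the very source the authors cite for Theorem~\ref{iso2}. So your argument reduces Theorem~\ref{iso1} to a stronger assertion that the cited source does not contain; as a derivation from \cite{bl1991} it is, in effect, circular.

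The second step, which you yourself flag as the main obstacle, is also left unfilled: even granting a nested order, one must prove that its initial segment at cardinality $|B(v,r)|$ is the ball $B(v,r)$, or at least has the same boundary size. Orders arising from compression arguments are not simply ``non-decreasing distance from $v$ with ties broken''; designing an order that is simultaneously compression-compatible and distance-first is precisely the hard part (this is the content of Riordan's construction), and an appeal to the reflective symmetry of the even cycles $C_{2k_i}$ does not establish it --- it only explains, as does the exceptional case (d) of Theorem~\ref{cartesiangrid}, why one should expect odd factors to behave differently. So nothing in the proposal actually proves the theorem: the two available honest routes are to reproduce the compression induction of \cite{bl1991} in full, or to do what the paper does and simply invoke their Theorem~8 as stated.
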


\begin{theorem}{\rm (Riordan, \cite[Theorem 1.1]{riordan-1998})}
\label{iso2}
Suppose that $H \subset P_{k_1} \cp \cdots \cp P_{k_n}$ and $|V(H)|= |B(P_{k_1} \cp \cdots \cp P_{k_n},v,r)|$ for some vertex $v \in V(P_{k_1} \cp \cdots \cp P_{k_n})$ of minimum degree and some natural number $r$. Then $$|\partial H| \ge |\partial{B(P_{k_1} \cp \cdots \cp P_{k_n},v,r)}|.$$
\end{theorem}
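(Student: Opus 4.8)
The plan is to derive Theorem~\ref{iso2} as a specialization of Riordan's general grid isoperimetric inequality (\cite[Theorem 1.1]{riordan-1998}) to the particular cardinalities that arise as sizes of balls. The general theorem provides a \emph{canonical order} on the vertices of $P_{k_1}\cp\cdots\cp P_{k_n}$ (a simplicial-type order, ranking vertices first by their $\ell_1$-distance from a fixed corner and breaking ties in a nested fashion) with the property that, for \emph{every} cardinality $m$, the initial segment of length $m$ under this order has the smallest possible vertex boundary among all $m$-subsets. The whole task then reduces to identifying the ball $B(P_{k_1}\cp\cdots\cp P_{k_n},v,r)$ with such an initial segment.

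First I would use the minimum-degree hypothesis on $v$: in a Cartesian product of paths this forces $v$ to be a corner, i.e.\ a vertex all of whose coordinates are endpoints of their respective paths. Placing $v$ at the origin, the distance in $P_{k_1}\cp\cdots\cp P_{k_n}$ is the $\ell_1$-distance, so $B(\ldots,v,r)$ is exactly the set of vertices whose coordinate-sum is at most $r$. Since a ball is a union of complete distance-levels $\{u:\ d(u,v)=j\}$ for $j=0,1,\dots,r$, and the canonical order exhausts each level before starting the next, the ball is precisely the initial segment of length $m=|B(\ldots,v,r)|$. Applying Riordan's theorem to this value of $m$ then yields $|\partial H|\ge |\partial B(\ldots,v,r)|$ for every $H$ of that cardinality, which is the assertion. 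The same scheme, using the torus order of Bollob\'as and Leader behind Theorem~\ref{iso1} (\cite[Theorem 8]{bl1991}), handles the even-torus case.

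The step I expect to be delicate is the precise matching of the ball with an initial segment of the cited order. Riordan's result is phrased for a specific vertex ordering, and one must check two things: that the minimum-degree hypothesis genuinely pins $v$ to a corner so that balls are ``downward closed'' in the order (a ball centred at an interior vertex would be a cross-polytope, which is \emph{not} an initial segment), and that the order ranks vertices by distance-from-corner with no distance-level split across the boundary of the ball. Both should be routine once the exact form of the cited order is unwound, but they are the only places where the special cardinality hypothesis $|V(H)|=|B(\ldots,v,r)|$ is actually used: without it the extremal configuration for a given size need not be a ball at all, and the clean inequality stated here could fail.
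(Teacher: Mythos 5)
The paper gives no independent proof of this statement: Theorem~\ref{iso2} is imported directly from the literature, with only the remark that it and Theorem~\ref{iso1} ``are consequences of the cited results for graphs of special vertex cardinalities.'' Your proposal is precisely that specialization argument --- pin the minimum-degree vertex to a corner, identify the corner ball $B(P_{k_1}\cp\cdots\cp P_{k_n},v,r)$ with an extremal (initial-segment) set of the cited isoperimetric theorem, and apply that theorem at the ball's cardinality --- so it is correct and follows essentially the same route as the paper, just with the routine details (which the paper omits) made explicit.
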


\begin{proposition}
If $k_1 \le \dots \le k_n$ and $\sum_{i=1}^{n-1}k_i +2 \le k_n$, then we have $$\gamma_{gr}(C_{2k_1} \cp \cdots \cp C_{2k_n})=2^nk_1 \cdots k_{n-1}(k_n-1).$$  If $k_1 \le \dots \le k_n$ and $\sum_{i=1}^{n-1}k_i + 1 \le k_n$, then we have $$\gamma_{gr}(P_{k_1} \cp \cdots \cp P_{k_n})= k_1 \cdots k_{n-1}(k_n-1).$$
\end{proposition}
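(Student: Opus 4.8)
The plan is to prove both equalities by matching lower and upper bounds, running the toroidal and grid cases in parallel. Throughout, write $H=C_{2k_1}\cp\cdots\cp C_{2k_{n-1}}$ (so $|V(H)|=2^{n-1}k_1\cdots k_{n-1}$) in the cyclic case and $H'=P_{k_1}\cp\cdots\cp P_{k_{n-1}}$ (so $|V(H')|=k_1\cdots k_{n-1}$) in the path case. For the lower bounds I would simply invoke Proposition~\ref{prp:cart}: regarding the product as $C_{2k_n}\cp H$ and using $\gamma_{gr}(C_{2k_n})=2k_n-2$ gives $\gamma_{gr}(C_{2k_1}\cp\cdots\cp C_{2k_n})\ge (2k_n-2)\,|V(H)|=2^nk_1\cdots k_{n-1}(k_n-1)$; likewise, regarding the grid as $P_{k_n}\cp H'$ with $\gamma_{gr}(P_{k_n})=k_n-1$ yields $\gamma_{gr}(P_{k_1}\cp\cdots\cp P_{k_n})\ge (k_n-1)\,|V(H')|=k_1\cdots k_{n-1}(k_n-1)$.

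For the upper bounds, set the target deficiency $K:=2^nk_1\cdots k_{n-1}=2|V(H)|$ in the toroidal case and $K:=k_1\cdots k_{n-1}=|V(H')|$ in the grid case, so that the claimed values equal $|V|-K$. By the ``in particular'' clause of Lemma~\ref{boundary}, it suffices to produce a single $m\le |V|-K$ such that every $m$-subset $A$ satisfies $|\partial A|\ge K$; the monotonicity of boundary sizes along initial segments then forces $\gamma_{gr}\le |V|-K$. The idea is to take $m$ to be the cardinality of a carefully chosen ball $B=B(\cdot,v,r)$ and to invoke the isoperimetric inequality (Theorem~\ref{iso1} in the toroidal case, Theorem~\ref{iso2} in the grid case, centering at a minimum-degree corner) to reduce the boundary estimate over all $m$-subsets to the single inequality $|\partial B|\ge K$.

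The heart of the argument is the choice of radius and the two accompanying estimates. I would take $r=R:=\sum_{i=1}^{n-1}k_i$ (the diameter of $H$) in the toroidal case, and $r=R':=\sum_{i=1}^{n-1}(k_i-1)$ (the eccentricity of the corner in $H'$) in the grid case. Since the boundary of a ball is exactly the sphere of the next radius, $\partial B(\cdot,v,r)=S_{r+1}$, I would estimate $|S_{r+1}|$ by writing a sphere point as a base point at base-distance $j$ together with a last-coordinate vertex at distance $r+1-j$. For $R+1\le k_n-1$ every $j\in\{0,\dots,R\}$ contributes, the base spheres exhaust $V(H)$ because $R$ is the diameter of $H$, and each surviving last-coordinate distance lies strictly between $0$ and $k_n$, hence is realized by exactly two vertices of $C_{2k_n}$; this gives $|\partial B|=2|V(H)|=K$. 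The same bookkeeping with a single last-coordinate vertex (paths have no antipodal doubling) gives $|\partial B|=|V(H')|=K$ in the grid case. For the cardinality bound, $B$ is contained in the layers whose last-coordinate distance is at most $R$, of which there are $1+2R$ in $C_{2k_n}$ (respectively $R'+1$ in $P_{k_n}$), so $|B|\le (1+2R)|V(H)|$ (respectively $(R'+1)|V(H')|$); the requirement $|B|\le |V|-K$ then reduces precisely to $1+2R\le 2k_n-2$, i.e.\ $\sum_{i=1}^{n-1}k_i+2\le k_n$ (respectively to $R'+1\le k_n-1$, which follows from $\sum_{i=1}^{n-1}k_i+1\le k_n$). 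Thus the stated hypotheses are exactly what makes the chosen ball simultaneously have boundary $K$ and cardinality at most $|V|-K$.

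The main obstacle I anticipate is pinning down the extremal ball and verifying that the gap hypotheses are precisely the threshold at which the two competing inequalities hold together: the boundary equals $K$ only while the ball stays in the ``middle regime'' $R+1\le k_n-1$, where the last coordinate has not yet reached its antipode, and the cardinality bound fails the moment the ball occupies more than $2k_n-2$ full layers. A secondary point requiring care is the interface with Lemma~\ref{boundary}: the isoperimetric theorems only compare a set to a ball of \emph{equal} cardinality, so $m$ must be taken to be a genuine ball cardinality, and one must confirm $m=|B|\le |V|-K$ so that the monotonicity argument of Lemma~\ref{boundary} applies regardless of the a priori unknown length of the Grundy dominating sequence.
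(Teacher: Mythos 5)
Your proposal is correct and takes essentially the same route as the paper, whose proof consists only of citing Proposition~\ref{prp:cart} for the lower bounds and Lemma~\ref{boundary} together with Theorems~\ref{iso1} and~\ref{iso2} for the upper bounds. Your choice of ball radius (the diameter of the first $n-1$ factors, respectively the corner eccentricity), the computation $|\partial B|=K$ via spheres, and the verification that the hypotheses $\sum_{i=1}^{n-1}k_i+2\le k_n$ and $\sum_{i=1}^{n-1}k_i+1\le k_n$ are exactly what make the ball small enough, supply precisely the details the paper omits.
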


\proof In both statements, the lower bound on the Grundy domination number follows from Proposition \ref{prp:cart}.

The upper bound in the first statement follows from Lemma~\ref{boundary} and Theorem \ref{iso1}, while the upper bound of the second statement follows from Lemma~\ref{boundary} and Theorem \ref{iso2}.
\qed

\section{Lexicographic product}
\label{sec:lex}

In this section we give an expression for the Grundy domination
number of the lexicographic product of graphs in terms of
corresponding invariants of the factors. From this result we in
particular did use explicit formulas for the Grundy domination
number of grids, cylinders and tori (with respect to the
lexicographic product).

\begin{proposition}
\label{pr:lex}
Let $G$ and $H$ be graphs. Then $$\gamma_{gr}(G\circ H)\ge \max\{{\alpha}(G)\gamma_{gr}(H),\gamma_{gr}(G)\}.$$
\end{proposition}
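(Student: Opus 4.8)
The claim is that $\gamma_{gr}(G \circ H) \ge \max\{\alpha(G)\gamma_{gr}(H), \gamma_{gr}(G)\}$, where $\alpha(G)$ is the independence number of $G$. The plan is to construct two separate legal dominating sequences in $G \circ H$, one achieving length $\alpha(G)\gamma_{gr}(H)$ and one achieving length $\gamma_{gr}(G)$, and then observe that $\gamma_{gr}(G \circ H)$ is at least the length of each.

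The second bound is the easy one. First I would take a Grundy dominating sequence $(d_1,\ldots,d_p)$ of $G$ with $p = \gamma_{gr}(G)$, fix an arbitrary vertex $h_0 \in V(H)$, and consider the sequence $((d_1,h_0),\ldots,(d_p,h_0))$ in $G \circ H$. The key point is that adjacency in the lexicographic product restricted to the projection onto $G$ behaves exactly like adjacency in $G$: whenever $d_i$ footprints some $g \in V(G)$ in the sequence in $G$ (meaning $g \in N_G[d_i]$ is newly dominated), the vertex $(g, h_0)$ lies in $N[(d_i,h_0)]$ in $G \circ H$ and is not dominated by any $(d_j, h_0)$ with $j < i$, since $g \notin N_G[d_j]$. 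Hence each $(d_i,h_0)$ footprints $(g,h_0)$ and the sequence is legal, giving $\gamma_{gr}(G \circ H) \ge p = \gamma_{gr}(G)$.

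For the first bound, I would take a maximum independent set $\{g_1,\ldots,g_a\}$ of $G$ with $a = \alpha(G)$, and a Grundy dominating sequence $(e_1,\ldots,e_q)$ of $H$ with $q = \gamma_{gr}(H)$. The idea is to process the $H$-layers over the independent vertices one at a time: run the full sequence $(g_1,e_1),(g_1,e_2),\ldots,(g_1,e_q)$, then $(g_2,e_1),\ldots,(g_2,e_q)$, and so on through $g_a$. This yields $aq = \alpha(G)\gamma_{gr}(H)$ terms. Within a single layer $\{g_s\}\times V(H)$, two vertices $(g_s,h)$ and $(g_s,h')$ are adjacent in $G \circ H$ iff $hh' \in E(H)$, so the layer induces a copy of $H$ and the legality of $(e_1,\ldots,e_q)$ transfers verbatim: each $(g_s,e_t)$ footprints within its own layer a vertex $(g_s,h)$ with $h \in N_H[e_t]$ newly dominated inside $H$.

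The main obstacle — and the only place where independence is genuinely used — is verifying that footprints made in layer $g_s$ are not stolen by the earlier layers $g_1,\ldots,g_{s-1}$. Here I would use that the $g_s$ are pairwise \emph{non-adjacent}: in the lexicographic product, $(g_r,h_1)$ dominates $(g_s,h_2)$ with $r \ne s$ only when $g_r g_s \in E(G)$, which never happens for distinct vertices of an independent set. Consequently no vertex of layer $g_s$ is dominated by any vertex from a different layer $g_r$, so the private neighbor $(g_s,h)$ that $(g_s,e_t)$ footprints within its layer is in fact globally undominated by all preceding terms. This confirms legality of the full concatenated sequence and gives $\gamma_{gr}(G \circ H) \ge \alpha(G)\gamma_{gr}(H)$. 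Taking the maximum of the two constructions completes the argument.
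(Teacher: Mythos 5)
Your proposal is correct and takes essentially the same approach as the paper: the paper's proof is exactly your layer-by-layer concatenation of a Grundy dominating sequence of $H$ over the layers indexed by a maximum independent set of $G$, with independence guaranteeing that no layer's footprints are stolen by earlier layers, while the bound $\gamma_{gr}(G\circ H)\ge \gamma_{gr}(G)$ is dismissed there as trivial (your explicit lift into a single $G$-layer justifies it). One cosmetic remark: your lifted sequence $((d_1,h_0),\ldots,(d_p,h_0))$ need not have a dominating vertex set in $G\circ H$, so you should invoke the standard fact that any legal sequence can be extended to a dominating sequence, whence its length still lower-bounds $\gamma_{gr}(G\circ H)$.
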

\proof
Let $\ell={\alpha}(G),n= \gamma_{gr}(H)$, let $\{d_1,\ldots,d_{\ell}\}$ be a maximum independent set in $G$,
and $(d'_1,\ldots,d'_n)$ be a Grundy dominating sequence in $H$.
Observe that $$((d_1,d'_1),\ldots,(d_1,d'_n),(d_2,d'_1),\ldots,(d_2,d'_n),\ldots,(d_{\ell},d'_1),\ldots,(d_{\ell},d'_n))$$
is a dominating sequence in $G\circ H$. Hence $\gamma_{gr}(G\circ H)\ge \ell n=\alpha(G)\gamma_{gr}(H)$.

The bound $\gamma_{gr}(G\circ H)\ge \gamma_{gr}(G)$ is trivial.
\qed

The lower bound from Proposition~\ref{pr:lex} is sharp. For instance, if $H$ is a complete graph
then clearly $\gamma_{gr}(G\circ H)=\gamma_{gr}(G)$. It is easy to see that  $\gamma_{gr}(G)\gamma_{gr}(H)$
is an upper bound for $\gamma_{gr}(G\circ H)$. Thus for any graph $G$, in which $\gamma_{gr}(G)=\alpha(G)$,
we have $\gamma_{gr}(G\circ H)= {\alpha}(G)\gamma_{gr}(H)$. On the other hand, there are graphs in which
this bound is not sharp. For instance, let $T$ be the tree from Fig.~\ref{fig:tree}, and let $H$ be any
graph, which is not complete, thus $\gamma_{gr}(H)\ge 2$. Note that the filled vertices in the figure present
a maximum independent set of $T$, and so $\alpha(T)=5$. Hence the bound in Proposition~\ref{pr:lex} is
$\alpha(T)\gamma_{gr}(H)=5\gamma_{gr}(H)$, but $\gamma_{gr}(T\circ H)\ge 5\gamma_{gr}(H)+1$. Indeed,
let $S$ be the sequence that starts in the layer $^u\!H$, by legally picking $\gamma_{gr}(H)$ vertices,
then choosing a vertex from $^v\!H$, and then choosing $\gamma_{gr}(H)$ vertices in each of the layers
that correspond to the remaining four leaves of $T$. This yields a legal sequence of the desired length.

\begin{figure}[ht!]
\centering
\begin{tikzpicture}[scale=.9,style=thick]
\def\vr{2.5pt}


\coordinate (p1) at (-1,0);
\coordinate (p2) at (0,0);
\coordinate (p3) at (1,1);
\coordinate (p4) at (2,1.4);
\coordinate (p5) at (2,0.6);
\coordinate (p6) at (1,-1);
\coordinate (p7) at (2,-1.4);
\coordinate (p8) at (2,-0.6);

\draw (p1) -- (p2) -- (p3) -- (p4);
\draw (p3) -- (p5);
\draw (p2) -- (p6) -- (p7);
\draw (p6) -- (p8);


\draw(p1)[fill=black] circle(\vr);
\draw(p2)[fill=white] circle(\vr);
\draw(p3)[fill=white] circle(\vr);
\draw(p4)[fill=black] circle(\vr);
\draw(p5)[fill=black] circle(\vr);
\draw(p6)[fill=white] circle(\vr);
\draw(p7)[fill=black] circle(\vr);
\draw(p8)[fill=black] circle(\vr);

\node[below] at (p2){$v$};
\node[below] at (p1){$u$};

\end{tikzpicture}
\vskip -0.3 cm \caption{The tree $T$.} \label{fig:tree}
\end{figure}
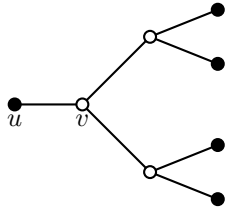

Next, we present an exact formula for $\gamma_{gr}(G\circ H)$.
Given a dominating sequence $D=(d_1,\ldots,d_k)$ in a graph $G$,
let $a(D)$ denote the cardinality of the set of vertices $d_i$ from $D$,
which are not adjacent to any vertex from $\{d_1,\ldots,d_{i-1}\}$.

\begin{theorem}
\label{thm:lex}
Let $G$ and $H$ be graphs. Then $$\gamma_{gr}(G\circ H)= \max\{a(D)(\gamma_{gr}(H)-1)+|\widehat{D}|\,;\,D\textit{ is a dominating sequence of $G$}\}.$$
\end{theorem}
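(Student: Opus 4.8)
The plan is to prove the claimed equality as two inequalities. For "$\ge$" I will show that every dominating sequence $D$ of $G$ produces a dominating sequence of $G\circ H$ of length $a(D)(\gamma_{gr}(H)-1)+|\widehat D|$; for "$\le$" I will take an arbitrary Grundy dominating sequence $S$ of $G\circ H$ and extract from it a dominating sequence $D$ of $G$ with $|S|\le a(D)(\gamma_{gr}(H)-1)+|\widehat D|$. Maximizing over $D$ in the first part and using the second part then gives equality.

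For the lower bound, fix a dominating sequence $D=(d_1,\dots,d_k)$ of $G$ and process the layers $^{d_1}\!H,\dots,^{d_k}\!H$ in this order. If $d_i$ is non-adjacent to all of $d_1,\dots,d_{i-1}$ (i.e.\ $d_i$ is one of the $a(D)$ vertices), then $^{d_i}\!H$ is still entirely undominated, so I insert a whole Grundy dominating sequence of $H$ inside it, contributing $\gamma_{gr}(H)$ legal moves. Otherwise $d_i$ is adjacent to some earlier $d_j$, so $^{d_i}\!H$ is already dominated from the outside; here I insert a single vertex $(d_i,h)$, which is legal because $d_i$ footprints in $G$ some $g_i\in N_G(d_i)$ with $g_i\notin\bigcup_{j<i}N_G[d_j]$, whence $^{g_i}\!H$ is undominated and becomes footprinted. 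The total length is $a(D)\gamma_{gr}(H)+(k-a(D))=a(D)(\gamma_{gr}(H)-1)+|\widehat D|$, and a short check shows the resulting set dominates $G\circ H$.

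For the upper bound, let $n_g$ be the number of vertices of $S$ in the layer $^g\!H$, so $|S|=\sum_g n_g$. The decisive structural fact is that a single vertex $(g,h)$ already dominates every layer $^{g'}\!H$ with $g'\in N_G(g)$, so once a layer is touched, all $G$-adjacent layers are dominated forever. From this I extract: (i) calling a layer \emph{fresh} when it is not dominated from outside before its first pick, every non-fresh layer satisfies $n_g=1$; (ii) every layer with $n_g\ge2$ is fresh, and in a fresh layer each move footprints a new vertex within the layer, so the $H$-coordinates of its moves form a legal sequence in $H$ and $n_g\le\gamma_{gr}(H)$; (iii) fresh layers are pairwise non-adjacent in $G$ and each footprints its own index in $G$. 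I then let $D$ consist, in order of first appearance, of the \emph{non-redundant} layers, where $g$ is non-redundant if $N_G[g]\not\subseteq\bigcup_{\text{earlier layers }g'}N_G[g']$. By construction $D$ is legal; it dominates $G$ because the set of all layers of $S$ dominates $G$ (as $\widehat S$ dominates $G\circ H$) while redundant layers add nothing to the $G$-domination. Let $F$ be the set of fresh layers; by (iii) we have $F\subseteq\widehat D$ and every element of $F$ contributes to $a(D)$, so $a(D)\ge|F|$. Rewriting the target inequality, it remains to prove $\sum_{g\in F}(n_g-1)+|R|\le a(D)(\gamma_{gr}(H)-1)$, where $R$ is the set of layers of $S$ not belonging to $D$.

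The main obstacle is exactly this term $|R|$: each redundant layer contributes $1$ to $|S|$ but nothing to $|\widehat D|$, so it must be paid for out of the slack in $a(D)(\gamma_{gr}(H)-1)$. I plan to handle it by a charging argument. A redundant layer $g$ is non-fresh with a single move footprinting some leftover vertex $(g',h'')$ inside an \emph{earlier} layer $g'$; one checks $g'$ must be fresh, and since only the first outside move reaching $g'$ can footprint a leftover of $g'$, each fresh layer receives at most one redundant layer, yielding an injection $R\hookrightarrow F$. Furthermore, if a fresh layer $g'$ receives a redundant layer, then its own legal $H$-sequence does not dominate $h''$, hence can be prolonged by one move, forcing $n_{g'}\le\gamma_{gr}(H)-1$. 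Thus $|R|\le\sum_{g'\in F}(\gamma_{gr}(H)-n_{g'})$, and combining with $\sum_{g\in F}(n_g-1)\le|F|(\gamma_{gr}(H)-1)$ and $a(D)\ge|F|$ gives precisely the required bound. I expect the delicate points of the write-up to be the two properties underlying the charging: that distinct redundant layers are charged to distinct fresh layers, and that a charged fresh layer genuinely has spare $H$-capacity.
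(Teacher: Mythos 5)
Your proof is correct, and while your lower-bound construction coincides exactly with the paper's, your upper bound takes a genuinely different---and in fact more robust---route. The paper defines $D$ by projecting the \emph{first} vertex of $S$ in \emph{every} touched layer, in order of first touch, and asserts that this $D$ is automatically a legal sequence of $G$; the count $|\widehat{S}|\le(|\widehat{D}|-a(D))+a(D)\gamma_{gr}(H)$ then follows from precisely your facts (i) and (ii). You instead admit into $D$ only the non-redundant layers, which makes legality and domination of $D$ immediate by construction, but creates the debt $|R|$ that you settle with the charging injection $R\hookrightarrow F$ plus the spare-capacity bound $n_{g'}\le\gamma_{gr}(H)-1$ for charged fresh layers. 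This extra machinery is not an inefficiency on your part: the paper's legality claim is false as stated. Indeed, let $G=P_3$ with vertices $a,b,c$ and edges $ab,bc$, let $H=P_3$ with vertices $1,2,3$ and edges $12,23$, and let $S=((b,1),(a,1))$; this is a legal dominating sequence of $G\circ H$ (the vertex $(a,1)$ footprints $(b,3)$), yet the paper's $D=(b,a)$ is not legal in $G$ because $N_G[a]\subseteq N_G[b]$. The paper's argument breaks in its second case, where the footprinted vertex lies in an adjacent layer ${}^{g}\!H$ that was itself touched earlier: then $x$ need not footprint $g$ with respect to $D$. Your framework handles exactly this situation: there $a$ is a redundant layer, charged to the fresh layer $b$, which indeed has $n_b=1\le\gamma_{gr}(H)-1$, and with $D=(b)$ one gets $|S|=2\le a(D)(\gamma_{gr}(H)-1)+|\widehat{D}|=2$. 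I also checked the two delicate points you flagged, and they go through: injectivity of the charge holds because the first redundant pick adjacent to a fresh layer $g'$ dominates all of ${}^{g'}\!H$, so no later pick can footprint inside it; and spare capacity holds because every pick of $S$ inside ${}^{g'}\!H$ precedes the charging pick (after it, a pick in ${}^{g'}\!H$ could footprint nothing), so the undominated vertex $h''$ prolongs that layer's legal $H$-sequence by one. In short, the paper's route buys brevity at the cost of a genuine gap; your route costs the redundant-layer bookkeeping and is sound, so it is not merely an alternative but a repair.
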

\proof
Let $D=(d_1,\ldots,d_m)$ be any dominating sequence of $G$,
and let $(d'_1,\ldots,d'_k)$ be a Grundy dominating sequence of $H$.
Then one can find a dominating sequence $S$ in $G\circ H$
of length $a(D)(\gamma_{gr}(H)-1)+|\widehat{D}|$ as follows.
Let $S$ be the sequence that corresponds to $D$,
only those vertices $d_i\in \widehat{D}$ which are not adjacent
to any vertex from $\{d_1,\ldots,d_{i-1}\}$ are repeated
$\gamma_{gr}(H)$ times in a row, so that the corresponding subsequence
is of the form  $((d_i,d'_1),\ldots,(d_i,d'_k))$. On the other hand,
the vertices $d_i\in \widehat{D}$ which are adjacent to some vertex from
$\{d_1,\ldots,d_{i-1}\}$ are projected only once, from any vertex
$(d_i,h)\in\ ^{d_i}\!H.$ It is easy to see that in either case the vertices
in $S$ are legally chosen. In the first case this is true because no vertex
of $^{d_i}\!H$ is dominated at the point when $(d_i,d'_1)$ is chosen,
thus $((d_i,d'_1),\ldots,(d_i,d'_k))$ is a legal subsequence. In the second
case this is true because $D$ is a legal sequence in $G$, and so when $d_i$
is chosen, which is at that point already dominated, this implies that
there exists another vertex $t\in V(G)$ that $d_i$ footprints. Hence,
when $(d_i,h)$ is chosen in $S$, it footprints vertices from $^{t}\!H$.
Note that the length of $S$ is $a(D)(\gamma_{gr}(H)-1)+|\widehat{D}|$,
and that $D$ is an arbitrary legal sequence of $G$.
This implies that
$\gamma_{gr}(G\circ H)\ge \max\{a(D)(\gamma_{gr}(H)-1)+|\widehat{D}|$.

For the converse, let $S$ be an arbitrary dominating sequence in $G\circ H$. Let $s_i=(x,y)$ be a vertex from $S$,
where $x\in V(G),y\in V(H)$. Note that when $s_i$ is added to $S$, all vertices from the layers $^{h}\!H$,
where $h\in N(x)$, are dominated. In particular, at most one vertex from each of the layers $^{h}\!H$
can be in $S$ after $s_i$.

Now, consider the sequence $D$ of vertices from $G$, defined as follows:
for each $s_i=(x,y)\in S$ add $x$ to $D$ if $s_i$ is the first vertex from $S$ which is in $^x\!H$.
First note that $D$ is a legal sequence of $G$. Indeed, when $x$ is added to $D$, $(x,y)$ footprints
some vertex, either from $^{x}\!H$ or from $^{z}\!H$, where $z\in N(x)$.
If $(x,y)$ footprints a vertex $(x,h)$, this implies that no vertex $s_j\in S$, where $j<i$, is in $N[x]\times V(H)$.
Thus in this case $x$ footprints itself with respect to $D$. In the case when $(x,y)$ footprints a vertex from a layer $^{g}\!H$,
where $g\in N(x)$, $x$ footprints $g$ with respect to $D$. In either case $x$ is a legal choice, hence $D$ is a legal sequence in $G$.
It is clear that $\widehat{D}$ is a dominating set, since $\widehat{D}=p_G(\widehat{S})$ and $\widehat{S}$ is a dominating set of
$G\circ H$.

Let $A(D)$ be the set of all vertices $d_k$ from $D$, such that $d_k$ is not adjacent to any vertex from $\{d_1,\ldots,d_{k-1}\}$.
(Note that $|A(D)|=a(D)$ by definition.) By the way $D$ is constructed, if $d_k\not\in A(D)$, then at the point when $s_i=(d_k,y)$ is added to
$S$, all vertices of the layer $^{d_k}\!H$ are already dominated. Hence, at most one vertex from $^{d_k}\!H$ can lie in $S$.
On the other hand, clearly at most $\gamma_{gr}(H)$ vertices from layers $^{d_k}\!H$, where $d_k\in A(D)$, can lie in $S$.
We infer that $|\widehat{S}|\le (|\widehat{D}|-a(D))+a(D)\gamma_{gr}(H)$, from which the desired inequality follows.
\qed

\begin{corollary}\label{cor:PnH} Let $H$ be an arbitrary graph that is not a complete graph. Then
\begin{displaymath}
\ggr(P_k \circ H)=
\left\{ \begin{array}{l l}
\frac{k}{2}\cdot \ggr(H)+1, & \textrm{$k$ is even, $k\neq 2$}\\
\left\lceil \frac{k}{2} \right\rceil \cdot \ggr(H), & \textrm{$k$ is odd.}\\
\end{array}
\right.
\end{displaymath}
\end{corollary}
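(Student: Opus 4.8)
The plan is to invoke Theorem~\ref{thm:lex} with $G=P_k$, which turns the computation of $\ggr(P_k\circ H)$ into a purely combinatorial optimization over the dominating sequences of the path. Writing $t=\ggr(H)\ge 2$ (here $H$ is not complete), I must determine $\max_D\big(a(D)(t-1)+|\widehat D|\big)$, the maximum ranging over all dominating sequences $D=(d_1,\dots,d_m)$ of $P_k$. I will establish matching lower and upper bounds.

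For the lower bound I exhibit one sequence in each parity. When $k$ is odd I take the canonical dominating set listed in the order $(v_1,v_3,\dots,v_k)$; it is independent, so every term footprints itself (is a fresh start in the sense of $a(D)$), giving $a(D)=|\widehat D|=\tfrac{k+1}{2}$ and value $\tfrac{k+1}{2}t$. When $k$ is even and $k\ge 4$ I take $(v_1,v_2,v_4,v_6,\dots,v_k)$: here $v_1$ footprints $\{v_1,v_2\}$, the non-fresh $v_2$ footprints $v_3$, and each later even vertex $v_{2i}$ is a fresh start footprinting $\{v_{2i},v_{2i+1}\}$ (with $v_k$ footprinting only itself). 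This yields $a(D)=\tfrac k2$ and $|\widehat D|=\tfrac k2+1$, hence value $\tfrac k2 t+1$. A routine check confirms both are legal dominating sequences matching the claimed formula.

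The heart of the matter is the upper bound, which I reduce to the single inequality
$$a(D)+|\widehat D|\le k+1 \qquad\text{for every dominating sequence }D\text{ of }P_k.$$
Granting it, note that the set of fresh starts is independent (a later fresh start would have an earlier fresh neighbour), so $a(D)\le\alpha(P_k)=\lceil k/2\rceil$; and since $t-2\ge 0$,
$$a(D)(t-1)+|\widehat D|\le a(D)(t-2)+(k+1)\le \big\lceil\tfrac k2\big\rceil(t-2)+(k+1),$$
the last expression being nondecreasing in $a(D)$. This evaluates to $\tfrac{k+1}{2}t$ for odd $k$ and to $\tfrac k2 t+1$ for even $k$, exactly the lower bounds. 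I expect the displayed inequality to be the main obstacle, and I would prove it as follows. Let $q$ be the number of maximal runs of consecutive chosen vertices of $\widehat D$ along $P_k$; these $q$ runs are separated by at least $q-1$ unchosen vertices, so $k-|\widehat D|\ge q-1$, and it suffices to show $a(D)\le q$, i.e.\ that each run contains at most one fresh start. Suppose a run held two fresh starts and pick two that are consecutive in position, $v_q$ and $v_{q'}$ with $q<q'$ (so $q'\ge q+2$ by independence); all of $v_q,\dots,v_{q'}$ lie in $\widehat D$. Consider the vertex among them occurring \emph{last} in $D$. It cannot be $v_q$ or $v_{q'}$, since a fresh start has no earlier neighbour while its in-run neighbour would precede it; hence it is an interior $v_r$ (non-fresh) whose two neighbours lie in $\widehat D$ and precede it, so $N[v_r]$ is already fully dominated when $v_r$ is chosen, contradicting legality. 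Thus $a(D)\le q\le k-|\widehat D|+1$.

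Finally I would record why $k=2$ is excluded: there $P_2=K_2$ is complete, only single-vertex sequences are legal, and the inequality $a(D)+|\widehat D|\le k+1$ is not tight (the true maximum of $a(D)+|\widehat D|$ is $2=k$), so $\tfrac k2 t+1$ is not attained and instead $\ggr(P_2\circ H)=\ggr(H)$.
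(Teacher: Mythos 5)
Your proof is correct, and while it starts exactly where the paper does --- invoking Theorem~\ref{thm:lex} and exhibiting the very same extremal sequences $(v_1,v_3,\ldots,v_k)$ and $(v_1,v_2,v_4,\ldots,v_k)$ for the lower bound --- your upper-bound argument is genuinely different from, and tighter than, the paper's. The paper argues informally that since fresh vertices contribute $\ggr(H)\ge 2$ and the rest contribute $1$, ``$D$ is optimal if and only if $a(D)$ is as big as possible,'' and then asserts without proof that $a(D)=k/2$ forces $|\widehat D|\le k/2+1$ in the even case. That greedy claim glosses over the possible trade-off between decreasing $a(D)$ and increasing $|\widehat D|$ (indeed, when $\ggr(H)=2$ the ``only if'' part is false: non-maximizing sequences can tie the optimum), so the paper's proof has real gaps at exactly the points you identified as the heart of the matter. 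Your inequality $a(D)+|\widehat D|\le k+1$ --- proved by showing each maximal run of chosen vertices contains at most one fresh start, via the last-chosen interior vertex of a run having its whole closed neighborhood already dominated --- handles the trade-off rigorously in one stroke: writing $a(D)(t-1)+|\widehat D|=a(D)(t-2)+\bigl(a(D)+|\widehat D|\bigr)$ and using $a(D)\le\alpha(P_k)=\lceil k/2\rceil$ together with $t\ge 2$ gives exactly the claimed values in both parities. You also make explicit why $k=2$ is exceptional, which the paper leaves implicit. The only cost of your route is length; what it buys is a complete proof where the paper offers a sketch, plus a reusable structural fact about dominating sequences of paths. (One cosmetic remark: you use $q$ both for the number of runs and as a vertex subscript; rename one of them.)
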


\proof
Let $P_k=v_1,v_2,\ldots ,v_k.$
Using Theorem~\ref{thm:lex}, we would like to find a dominating sequence $D=(d_1,\ldots , d_t)$ of $P_k$ with the largest value $a(D)\ggr(H)+(|\widehat{D}|-a(D))$. From the proof of Theorem~\ref{thm:lex} it follows that the vertices $d_i \in \widehat{D}$ that are not adjacent to vertices $\widehat{D_{i-1}}$ contribute $\ggr(H) \geq 2$ to the Grundy dominating sequence of $P_k \circ H$, but the other vertices of $\widehat{D}$ contribute just 1 to the Grundy dominating sequence of $P_k \circ H$. Therefore $D$ is optimal if and only if $a(D)$ is as big as possible. Thus $a(D)=\left\lceil  \frac{k}{2} \right\rceil$. In the case when $k$ is odd, $D=(v_1,v_3,\ldots , v_k)$ and this is the only case with $a(D)=\frac{k+1}{2}.$ Thus $\ggr(P_k\circ H)=\left\lceil \frac{k}{2} \right\rceil \cdot \ggr(H).$ If $k$ is even, then $a(D)=\frac{k}{2}$ implies $|\widehat{D}| \leq \frac{k}{2}+1$. The set $D$ can be chosen in such a way that the equality holds, for example $D=(v_1,v_2,v_4,v_6,\ldots , v_k)$ contains $\frac{k}{2}+1$ vertices. Thus $\ggr(P_k\circ H)=\frac{k}{2} \cdot \ggr(H) +1.$
\qed

\begin{corollary} Let $k,l > 2$. Then
\begin{displaymath}
\ggr(P_k \circ P_l)=
\left\{ \begin{array}{l l}
\frac{k}{2}\cdot (l-1)+1, & \textrm{$k$ is even}\\
\left\lceil \frac{k}{2} \right\rceil \cdot (l-1), & \textrm{$k$ is odd.}\\
\end{array}
\right.
\end{displaymath}
\end{corollary}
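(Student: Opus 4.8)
The plan is to derive this statement as an immediate specialization of Corollary~\ref{cor:PnH}, applied with $H = P_l$. Before substituting, I would verify the two hypotheses that Corollary~\ref{cor:PnH} requires. First, that corollary assumes $H$ is non-complete; since $P_l$ is complete only when $l \le 2$, the hypothesis $l > 2$ guarantees that $P_l$ is not complete, so the corollary applies. Second, I need the explicit value of $\ggr(P_l)$ to insert into the formula. Here I would invoke the fact that every path is a caterpillar, so that $\ggr(P_l) = |V(P_l)| - 1 = l - 1$; this is precisely the value already used in the proof of Theorem~\ref{cartesiangrid}, and it also follows from the discussion of caterpillars immediately after Proposition~\ref{prp:edge_cover}, where it is noted that any caterpillar $G$ satisfies $\ggr(G) = |V(G)| - 1$.

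With these two observations, the result is a direct substitution. Replacing $\ggr(H)$ by $l - 1$ in the first case of Corollary~\ref{cor:PnH} yields $\frac{k}{2}(l-1) + 1$ for even $k$, where the side condition $k \ne 2$ is automatically met because $k > 2$; replacing $\ggr(H)$ by $l - 1$ in the second case yields $\left\lceil \frac{k}{2} \right\rceil (l-1)$ for odd $k$. I do not anticipate any genuine obstacle, since the combinatorial work of optimizing $a(D)(\ggr(H)-1) + |\widehat{D}|$ over all dominating sequences $D$ of $P_k$ was already carried out in Corollary~\ref{cor:PnH}; the present statement merely records the resulting value when the second factor is itself a path. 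The only points demanding any attention are the bookkeeping of the non-completeness hypothesis and the evaluation $\ggr(P_l) = l - 1$.
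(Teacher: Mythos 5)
Your proposal is correct and follows exactly the route the paper intends: the paper states this corollary with no proof at all, treating it as an immediate specialization of Corollary~\ref{cor:PnH} with $H = P_l$, which is precisely what you do. Your extra care in checking the hypotheses (that $l>2$ makes $P_l$ non-complete, that $k>2$ disposes of the $k\neq 2$ side condition, and that $\ggr(P_l)=l-1$ since paths are caterpillars) is exactly the bookkeeping the paper leaves implicit.
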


\begin{corollary} Let $k,l > 2$. Then
\begin{displaymath}
\ggr(P_k \circ C_l)=
\left\{ \begin{array}{l l}
\frac{k}{2}\cdot (l-2)+1, & \textrm{$k$ is even}\\
\left\lceil \frac{k}{2} \right\rceil \cdot (l-2), & \textrm{$k$ is odd.}\\
\end{array}
\right.
\end{displaymath}
\end{corollary}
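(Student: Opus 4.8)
The plan is to derive this corollary as an immediate specialization of Corollary~\ref{cor:PnH}, taking $H=C_l$. The first step is to check that the hypotheses of Corollary~\ref{cor:PnH} are met: that corollary requires $H$ to be a non-complete graph, and $C_l$ is non-complete precisely when $l\ge 4$ (recall that $C_3=K_3$ is complete). I would therefore work under the assumption $l\ge 4$; the case $l=3$ is the one genuine subtlety, discussed below. I also recall the value $\ggr(C_l)=l-2$ for $l\ge 3$, which was already invoked in the proof of Theorem~\ref{cartesiangrid}.

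The second and essentially only computational step is the substitution. With $H=C_l$ non-complete and $\ggr(C_l)=l-2$, Corollary~\ref{cor:PnH} gives, for $k>2$,
$$\ggr(P_k\circ C_l)=\frac{k}{2}\,(l-2)+1 \qquad \text{when $k$ is even,}$$
and
$$\ggr(P_k\circ C_l)=\left\lceil\frac{k}{2}\right\rceil (l-2) \qquad \text{when $k$ is odd.}$$
The hypothesis $k>2$ guarantees $k\neq 2$ in the even branch, so the case distinction of Corollary~\ref{cor:PnH} transfers verbatim, and the two displayed expressions are exactly the claimed formula. No further argument is needed, since all the work has already been done in Theorem~\ref{thm:lex} and Corollary~\ref{cor:PnH}.

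The only obstacle is the boundary value $l=3$. There $C_3=K_3$ is complete, so Corollary~\ref{cor:PnH} does not apply, and in fact the stated formula fails: because $P_k\circ K_3$ has a complete second factor, the remark following Proposition~\ref{pr:lex} gives $\ggr(P_k\circ K_3)=\ggr(P_k)=k-1$, which disagrees with the formula for $l=3$ as soon as $k\ge 5$ (for instance $\ggr(P_5\circ C_3)=\ggr(P_5)=4$, whereas the odd-case formula yields $3$). Hence the clean statement holds only for $l\ge 4$, and I would either strengthen the hypothesis to $l>3$ or record $l=3$ separately via the identity $\ggr(P_k\circ C_3)=\ggr(P_k)=k-1$.
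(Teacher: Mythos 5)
Your derivation is correct and is exactly the paper's (implicit) proof: the paper states this corollary with no separate argument, as the immediate specialization $H=C_l$, $\ggr(C_l)=l-2$, of Corollary~\ref{cor:PnH}. Your flag on the boundary case $l=3$ is moreover a genuine and correct catch rather than a pedantic one: $C_3=K_3$ is complete, so Theorem~\ref{thm:lex} (with $\ggr(C_3)=1$) or the remark following Proposition~\ref{pr:lex} gives $\ggr(P_k\circ C_3)=\ggr(P_k)=k-1$, which disagrees with the displayed formula for every $k\ge 5$; hence the hypothesis in the paper's statement should read $l>3$ rather than $l>2$ (as it does, for instance, in the later corollary on $C_k\circ C_l$).
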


Using Theorem~\ref{thm:lex} and the same ideas as in the proof of Corollary~\ref{cor:PnH}, we obtain the following result.

\begin{corollary}
Let $H$ be an arbitrary graph that is not a complete graph, and let $k>3$. Then
\begin{displaymath}
\ggr(C_k \circ H)=
\left\{ \begin{array}{l l}
\frac{k}{2}\cdot \ggr(H), & \textrm{$k$ is even}\\
\left\lfloor  \frac{k}{2} \right\rfloor \cdot \ggr(H) +1, & \textrm{$k$ is odd.}\\
\end{array}
\right.
\end{displaymath}
\end{corollary}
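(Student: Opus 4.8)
The plan is to apply Theorem~\ref{thm:lex} to $G=C_k$, which reduces the problem to finding, among all dominating sequences $D$ of $C_k$, the one maximizing the quantity $a(D)(\gamma_{gr}(H)-1)+|\widehat{D}|$. Since $\gamma_{gr}(H)\ge 2$ (as $H$ is not complete), the same reasoning as in Corollary~\ref{cor:PnH} shows that a vertex $d_i$ counted by $a(D)$ contributes $\gamma_{gr}(H)$ while every other vertex of $\widehat{D}$ contributes only $1$; hence we want $a(D)$ as large as possible, and given the maximal $a(D)$ we then want $|\widehat{D}|$ as large as possible. So the entire corollary comes down to two combinatorial facts about cycles: the maximum possible value of $a(D)$ over dominating sequences $D$ of $C_k$, and the largest attainable $|\widehat{D}|$ subject to that maximum.

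First I would determine $\max a(D)$. The vertices counted by $a(D)$ are exactly those $d_i$ not adjacent to any earlier chosen vertex; since each such vertex ``isolates'' itself at the moment of selection, the set of these vertices forms an independent set in $C_k$. Therefore $a(D)\le\alpha(C_k)=\lfloor k/2\rfloor$, and this bound is achieved: picking a maximum independent set of $C_k$ first (say $v_1,v_3,v_5,\dots$), each chosen vertex footprints itself, so these are legal and all counted by $a(D)$. This gives $a(D)=\lfloor k/2\rfloor$ and pins down the leading term $\lfloor k/2\rfloor\,\gamma_{gr}(H)$ in both cases.

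Next I would analyze $|\widehat{D}|$ and split on the parity of $k$. When $k$ is even, $\lfloor k/2\rfloor=k/2$ and a maximum independent set of size $k/2$ is already a dominating set of $C_k$ (it is a perfect code pattern on the even cycle), so the sequence can be completed so that $|\widehat{D}|=k/2$, with no room for an extra $+1$ term, yielding $\gamma_{gr}(C_k\circ H)=\tfrac{k}{2}\gamma_{gr}(H)$; one must check that no dominating sequence with $a(D)=k/2$ can have $|\widehat{D}|>k/2$, which follows because once the independent set is maximal and already dominating, any further vertex added to $D$ would be adjacent to an already-chosen vertex and could not footprint a new vertex. When $k$ is odd, $\lfloor k/2\rfloor=(k-1)/2$, and a maximum independent set of this size does not dominate $C_k$ (exactly one vertex is left undominated on the odd cycle), so after placing the $(k-1)/2$ independent vertices we may legally append one further vertex $d_j$ (adjacent to an earlier one) that footprints the remaining undominated vertex. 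This gives $|\widehat{D}|=(k-1)/2+1$ with $a(D)=(k-1)/2$, producing $\lfloor k/2\rfloor\gamma_{gr}(H)+1$.

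The main obstacle is the optimization tradeoff in the odd case: a priori one might try to sacrifice one independent vertex to gain more in $|\widehat{D}|$, so I must confirm that the objective $a(D)(\gamma_{gr}(H)-1)+|\widehat{D}|$ is genuinely maximized by taking $a(D)$ maximal rather than by some sequence with smaller $a(D)$ but larger support. The clean argument is that dropping $a(D)$ by one loses $\gamma_{gr}(H)-1\ge 1$ but can increase $|\widehat{D}|$ by at most a controlled amount; since each independent ``self-footprinting'' vertex already contributes to the support anyway, lowering $a(D)$ never yields a net gain, exactly as in the proof of Corollary~\ref{cor:PnH}. Verifying that this tradeoff analysis really forces $a(D)=\lfloor k/2\rfloor$ to be optimal, and that the remaining slack in $|\widehat{D}|$ behaves as claimed for each parity, is the only step requiring care; everything else is a direct transcription of the $P_k$ argument to the cyclic setting.
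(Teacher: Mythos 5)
Your framework---Theorem~\ref{thm:lex} plus optimizing $a(D)(\ggr(H)-1)+|\widehat{D}|$ over dominating sequences $D$ of $C_k$---is exactly what the paper intends, since its entire proof of this corollary is the remark that the ideas of Corollary~\ref{cor:PnH} carry over. However, your odd-case lower-bound construction is factually wrong: a maximum independent set of an odd cycle \emph{is} a dominating set (its $(k-1)/2$ cyclic gaps have sizes summing to $(k+1)/2$, so every gap has size at most $2$ and each gap vertex has a chosen neighbour). Hence after placing the $(k-1)/2$ independent vertices there is no undominated vertex left, nothing can be legally appended, and your sequence stops at value $\lfloor k/2\rfloor\ggr(H)$, one short of the claim. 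The extra vertex must be interleaved \emph{before} the independent set is completed, e.g.\ $D=(v_0,v_1,v_3,v_5,\dots,v_{k-2})$: here $v_1$ footprints $v_2$, while $v_0,v_3,v_5,\dots,v_{k-2}$ have no earlier neighbour, giving $a(D)=(k-1)/2$ and $|\widehat{D}|=(k+1)/2$ as required.

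The more serious problem is the step you yourself flag as ``the only step requiring care'': you never control how much $|\widehat{D}|$ can grow when $a(D)$ drops, and the reason you offer (``each self-footprinting vertex already contributes to the support anyway'') is not an argument. Since $\ggr(C_k)=k-2$, a hypothetical sequence with $a(D)=\lfloor k/2\rfloor-1$ and $|\widehat{D}|=k-2$ would, for $\ggr(H)=2$, have value $k+\lfloor k/2\rfloor-3$, which exceeds the claimed optimum $k$ once $k\ge 8$; nothing in your proposal excludes such sequences. Note also that your lexicographic principle is false as stated: in $C_6$ with $\ggr(H)=2$, the sequence $(v_0,v_1,v_5,v_3)$ has $a(D)=2<3$ yet attains the optimum value $6$, so optimality does not force $a(D)$ maximal; only an inequality can be proved. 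What is true, and what suffices, is the joint bound $a(D)+|\widehat{D}|\le k$ for every dominating sequence $D$ of $C_k$: between any two cyclically consecutive vertices of $A(D)$ some vertex of the cycle must be missing from $\widehat{D}$, for otherwise the last vertex $w$ of that arc to enter $D$ could footprint nothing ($w$ is already dominated, having an earlier neighbour; its arc-neighbours entered earlier, hence are self-dominated; and an arc endpoint $u\in A(D)$ can be neither earlier than $w$, being then already dominated, nor later, since $u$ would then have the earlier neighbour $w$, contradicting $u\in A(D)$). Counting the $a(D)$ arcs gives $|\widehat{D}|\le k-a(D)$. Then $a(D)(\ggr(H)-1)+|\widehat{D}|=a(D)(\ggr(H)-2)+\bigl(a(D)+|\widehat{D}|\bigr)\le\lfloor k/2\rfloor(\ggr(H)-2)+k$, which equals $\tfrac{k}{2}\ggr(H)$ for even $k$ and $\lfloor k/2\rfloor\ggr(H)+1$ for odd $k$; this one computation also subsumes your even-case claim that $a(D)=k/2$ forces $|\widehat{D}|=k/2$, whose proof as written wrongly assumes the independent vertices are all chosen first. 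To be fair, the same unproved ``optimal iff $a(D)$ maximal'' assertion appears in the paper's own proof of Corollary~\ref{cor:PnH}, so the gap is inherited from the source you cite---but it is a genuine gap, and the inequality above is what closes it.
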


\begin{corollary} Let $k,l > 3$. Then
\begin{displaymath}
\ggr(C_k \circ C_l)=
\left\{ \begin{array}{l l}
\frac{k}{2}\cdot (l-2), & \textrm{$k$ is even}\\
\left\lfloor  \frac{k}{2} \right\rfloor \cdot (l-2) +1, & \textrm{$k$ is odd.}\\
\end{array}
\right.
\end{displaymath}
\end{corollary}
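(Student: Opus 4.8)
The plan is to obtain this formula as an immediate specialization of the preceding corollary, which gives $\gamma_{gr}(C_k\circ H)$ for every non-complete graph $H$ and every $k>3$. Concretely, I would apply that result with $H=C_l$, so that the entire content of the statement is inherited from the general formula and only a substitution remains.

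The one hypothesis that must be checked is that $C_l$ is not a complete graph, and this is precisely where the assumption $l>3$ is used: for $l=3$ one has $C_3=K_3$, whereas for every $l\ge 4$ the cycle $C_l$ contains a pair of vertices at distance $2$, which are therefore non-adjacent, so $C_l$ is not complete. Hence the preceding corollary is applicable with $H=C_l$, and its two cases (according to the parity of $k$) carry over verbatim, with $\gamma_{gr}(H)$ replaced by $\gamma_{gr}(C_l)$.

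It then remains only to insert the value $\gamma_{gr}(C_l)=l-2$, which was already recorded and used in the proof of Theorem~\ref{cartesiangrid}. Substituting $\gamma_{gr}(H)=l-2$ into $\tfrac{k}{2}\cdot\gamma_{gr}(H)$ for even $k$ and into $\lfloor k/2\rfloor\cdot\gamma_{gr}(H)+1$ for odd $k$ yields exactly the two displayed expressions. I do not expect any genuine obstacle here: the argument is a single-line substitution into an already established general formula, the only subtlety being the verification of non-completeness, which is exactly why the statement requires $l>3$ rather than merely $l\ge 3$.
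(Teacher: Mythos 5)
Your proposal is correct and is essentially the paper's own (implicit) argument: the paper states this corollary without further proof, precisely as the specialization $H=C_l$ of the preceding corollary on $\ggr(C_k\circ H)$, using $\ggr(C_l)=l-2$ and the fact that $C_l$ is not complete for $l>3$. Your explicit check of the non-completeness hypothesis (ruling out $C_3=K_3$) is exactly the point of the assumption $l>3$.
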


\section{Direct product}
\label{sec:direct}

Quite often the direct product is the most difficult one among the
standard products when investigating its invariants and similar
problems.  This phenomenon is also true in the case of the Grundy
domination number. In this section we give a general lower bound on
the Grundy domination number and specialize it to products of paths
and cycles. In the case of the product of two paths where a shortest
path is of even order we also give an exact value.
  We begin
with the following general lower bound where $a(D)$ is the same
function as in Section~\ref{sec:lex}.

\begin{proposition}\label{direct}
Let $G$ and $H$ be graphs. Then
\begin{eqnarray*}
     \begin{aligned}
     &\gamma_{gr}(G\times H) \geq \max \big\{ \\
        &\max\{a(D)|V(H)|+\ggr^t(H)(|\widehat{D}|-a(D))\,;\,D\textrm{ is a dominating sequence of $G$}\}\\
        &\max\{a(D)|V(G)|+\ggr^t(G)(|\widehat{D}|-a(D))\,;\,D\textrm{ is a dominating sequence of $H$}\} \big\}\\
     \end{aligned}
\end{eqnarray*}
\end{proposition}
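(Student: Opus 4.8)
The plan is to establish the stronger single-sequence inequality
$$\gamma_{gr}(G\times H)\ge a(D)|V(H)|+\ggr^t(H)\bigl(|\widehat{D}|-a(D)\bigr)$$
for an arbitrary dominating sequence $D$ of $G$; taking the maximum over all such $D$, and then repeating the argument with the roles of the two (commutative) factors interchanged, will produce the two expressions inside the outer maximum. So I would fix a dominating sequence $D=(d_1,\dots,d_m)$ of $G$ together with a Grundy total dominating sequence $(h_1,\dots,h_p)$ of $H$, where $p=\ggr^t(H)$, and build an explicit legal sequence $S$ in $G\times H$ by sweeping through the layers ${}^{d_1}\!H,\dots,{}^{d_m}\!H$ in the order given by $D$. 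Partition $\widehat{D}$ into the set $A$ of vertices that footprint themselves in $G$ (equivalently, are adjacent to no earlier term, so $|A|=a(D)$) and the complement $B$, with $|B|=|\widehat{D}|-a(D)$.

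For the construction, when $d_i\in A$ I would append all $|V(H)|$ vertices $(d_i,h)$, $h\in V(H)$, in any order. When $d_i\in B$ I would fix a vertex $g_i\in N(d_i)$ that $d_i$ footprints in $D$ (such a neighbor exists because $d_i$, being adjacent to an earlier term, is already dominated and so cannot footprint itself) and append the $p$ vertices $(d_i,h_1),\dots,(d_i,h_p)$. The whole point is the structural identity $N_{G\times H}\bigl((g,h)\bigr)=N(g)\times N(h)$ together with the fact that every layer ${}^{g}\!H$ is edgeless in the direct product.

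Verifying legality is the crux. For $d_i\in A$: since $d_i$ is adjacent to no earlier $d_j$, no vertex chosen in an earlier layer can dominate $(d_i,h)$ (that would require $d_jd_i\in E(G)$), and within the edgeless layer ${}^{d_i}\!H$ no chosen vertex dominates another; hence each $(d_i,h)$ footprints \emph{itself}. For $d_i\in B$: because $g_i$ is footprinted by $d_i$ in $D$, we have $g_i\notin N[d_j]$ for all $j<i$, so \emph{no} vertex chosen in the earlier layers dominates any vertex of ${}^{g_i}\!H$ — the entire layer ${}^{g_i}\!H$ is still undominated when processing of layer $d_i$ begins. Now $(d_i,h_t)$ dominates exactly $\{g_i\}\times N(h_t)$ inside that layer, so the previously chosen $(d_i,h_1),\dots,(d_i,h_{t-1})$ have covered only $\{g_i\}\times\bigcup_{s<t}N(h_s)$; the total-sequence condition $N(h_t)\setminus\bigcup_{s<t}N(h_s)\neq\emptyset$ then yields a fresh private neighbor $(g_i,h')$. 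I expect this layer-freshness step to be the main obstacle: it is precisely the translation of closed-neighborhood footprinting in $G$ into \emph{open}-neighborhood footprinting inside a single product layer, which is exactly why $\ggr^t(H)$, and not $\ggr(H)$, is the right quantity here.

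Finally I would count and conclude. The $A$-layers contribute $a(D)|V(H)|$ terms and the $B$-layers contribute $p\bigl(|\widehat{D}|-a(D)\bigr)$, so $S$ is a legal sequence of length $a(D)|V(H)|+\ggr^t(H)\bigl(|\widehat{D}|-a(D)\bigr)$. Extending $S$ to a maximal legal sequence only increases its length and necessarily makes it dominating (any undominated vertex could otherwise be appended, footprinting itself), whence $\gamma_{gr}(G\times H)$ is at least this length. Maximizing over all dominating sequences $D$ of $G$ gives the first inner bound, and the symmetric construction with $G$ and $H$ swapped gives the second, completing the proof.
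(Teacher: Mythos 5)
Your proof is correct and takes essentially the same route as the paper's: the identical layer-by-layer construction in which each $d_i$ footprinting itself contributes a full (edgeless) layer ${}^{d_i}\!H$ and each other $d_i$ contributes a copy of a Grundy total dominating sequence of $H$, with legality justified exactly by the private-neighbor layer ${}^{g_i}\!H$ argument. Your only (harmless) deviation is to extend the legal sequence to a maximal one instead of checking directly that it is dominating.
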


\proof We first note that it suffices to present a construction that yields a dominating sequence of length $$\max\{a(D)|V(H)|+\ggr^t(H)(|\widehat{D}|-a(D))\,;\,D\textrm{ is a dominating sequence of $G$}\}\,.$$
Let $D$ be a dominating sequence of $G$. Now we construct a sequence $S$ in $G \times H$ that corresponds to $D$ in the following way. For a vertex $d_i\in \widehat{D}$, which is not adjacent
to any vertex from $\{d_1,\ldots,d_{i-1}\}$, all vertices of $^{d_i}\!H$ are added to $S$ (this is legal because $^{d_i}\!H$ is an independent set, and $(d_i,h)$ is not adjacent to any $(d_j,h')$ for $j<i$). On the other hand, if $d_i$ is adjacent to some $d_j$, $j<i$, then $d_i$ footprints a vertex $g$ with respect to $D$ in $G$. Given a Grundy total dominating sequence $T=(t_1,\ldots , t_r)$ in $H$, where $r=\ggr^t(H),$ we add to $S$ the sequence $(d_i,t_1),\ldots , (d_i,t_r)$ (indeed this is legal because $(d_i,t_j)$ footprints $(g,t_j')$, where $t_j'$ is a neighbor of $t_j$ footprinted by $t_j$ with respect to $T$). The length of $S$ is $a(D)|V(H)|+\ggr^t(H)(|\widehat{D}|-a(D))$, as desired. \qed

In the next few results we will use the following notations $V(P_k)=[k]$, $E(P_k)=\{\{i,i+1\}: 1\le i\le k-1\}$ and $V(C_k)=[k]$, $E(C_k)=\{\{i,i+1\}: 1\le i\le k-1\} \cup \{1,k\}.$

Proposition~\ref{direct} yields the following lower bounds.
\begin{corollary}
If $k \geq 2$ and $l\geq 4$, then
\begin{displaymath}
\ggr(P_k \times C_l) \geq
\left\{ \begin{array}{l l}
\max\{kl-2k-l+6,kl-2k\}, & \textrm{$k,l$ are even}\\
kl-k-l+3, & \textrm{$k,l$ are odd.}\\
\max\{kl-2k,kl-k-l+3\}, & \textrm{$k$ is even, $l$ is odd}\\
kl-2k-l+6, & \textrm{$k$ is odd, $l$ is even}\\
\end{array}
\right.
\end{displaymath}
\end{corollary}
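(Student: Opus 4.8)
The plan is to apply Proposition~\ref{direct} and evaluate the two inner maxima for $G=P_k$ and $G=C_l$ (swapping roles), then take the overall maximum. The key inputs I would need are the Grundy total domination numbers and optimally chosen dominating sequences of paths and cycles. For paths and cycles it is standard that $\ggr^t(C_l)=l$ when $l$ is even and $\ggr^t(C_l)=l-1$ when $l$ is odd; similarly $\ggr^t(P_k)$ equals $k$ or $k-1$ according to the parity of $k$ (one picks endpoints of edges alternately so that each new vertex opens a new open-neighborhood footprint, losing one step in the odd case). These parity-dependent values are precisely what will produce the four-way case split in the statement.

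First I would handle the contribution coming from a dominating sequence $D$ of $P_k$, paired with $\ggr^t(C_l)$. The quantity to maximize is $a(D)\cdot l + \ggr^t(C_l)\cdot(|\widehat D|-a(D))$. As in Corollary~\ref{cor:PnH}, the leverage lies in making $a(D)$ large, since each such vertex contributes the full $|V(C_l)|=l$ rather than merely $\ggr^t(C_l)\le l$; but unlike the lexicographic case, the non-isolated-choice vertices now contribute $\ggr^t(C_l)$ (not $1$), so I cannot simply ignore them. Using $a(D)=\lceil k/2\rceil$ with an independent-type sequence $D=(v_1,v_3,\dots)$, and then adding the remaining path vertices to enlarge $|\widehat D|$ up to $k$ (each extra vertex adding $\ggr^t(C_l)$), the expression becomes $\lceil k/2\rceil\, l + \ggr^t(C_l)\,(k-\lceil k/2\rceil)$. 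Substituting $\ggr^t(C_l)=l$ or $l-1$ by parity of $l$ and $\lceil k/2\rceil$ by parity of $k$ gives, after simplification, the terms $kl-2k$ (when $l$ even) and $kl-k-l+3$ or $kl-2k-l+6$ (when $l$ odd), matching the relevant entries.

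Next I would handle the symmetric contribution from a dominating sequence $D$ of $C_l$, paired with $\ggr^t(P_k)$, maximizing $a(D)\cdot k + \ggr^t(P_k)\,(|\widehat D|-a(D))$. Here one mimics the even/odd-cycle analysis of the lexicographic corollaries: for $C_l$ one can take $a(D)=\lfloor l/2\rfloor$ (even $l$) or $\lfloor l/2\rfloor$ with one extra footprint (odd $l$), and pad $|\widehat D|$ up to $l$. Plugging in $\ggr^t(P_k)=k$ or $k-1$ by parity of $k$ yields the complementary candidate values, notably $kl-2k-l+6$ when $k$ is even. For each of the four parity combinations the stated bound is then the maximum of the two candidates just computed, which is why two of the cases display an explicit $\max\{\cdot,\cdot\}$ while the other two collapse to a single dominant term.

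The main obstacle I anticipate is bookkeeping rather than conceptual: correctly computing $\ggr^t(P_k)$ and $\ggr^t(C_l)$ for each parity and, more delicately, verifying that a single dominating sequence $D$ can \emph{simultaneously} achieve the maximal $a(D)$ \emph{and} the maximal $|\widehat D|$, since these two targets pull in opposite directions (making $a(D)$ large forces spread-out, pairwise-nonadjacent choices, whereas enlarging $|\widehat D|$ requires appending adjacent vertices without destroying legality). I would need to exhibit explicit sequences — e.g. first lay down an independent footprinting set, then append the intermediate vertices one at a time — and check that each appended vertex remains a legal choice in $G$ (it footprints a genuinely new vertex of $G$) so that Proposition~\ref{direct} applies with the claimed parameters. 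Once the four arithmetic simplifications are carried out and the two branches compared, the corollary follows directly.
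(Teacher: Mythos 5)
Your high-level strategy (apply Proposition~\ref{direct} in both orientations, with parity-dependent values of $\ggr^t$ for paths and cycles) is the same as the paper's, but two of your key inputs are wrong, and they break the argument. First, $\ggr^t(C_l)\neq l$ for even $l$: in an even cycle the open neighborhoods of the vertices on one side of the bipartition are $2$-element sets that cyclically cover the other side, so at most $l/2-1$ vertices from each side can be chosen legally, giving $\ggr^t(C_l)=l-2$ (e.g.\ in $C_4$ one has $N(v_1)=N(v_3)$, so $\ggr^t(C_4)=2$). The paper uses $l-2$. With your value, your path-side expression would evaluate to $\lceil k/2\rceil\,l + l\,(k-\lceil k/2\rceil)=kl$, which is absurd and in any case not the claimed term $kl-2k$.

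Second, and more fundamentally, the dominating sequences you rely on do not exist. You want $D$ in $P_k$ with $a(D)=\lceil k/2\rceil$ \emph{and} $|\widehat D|=k$, built by laying down the independent set $(v_1,v_3,\dots)$ and then appending the remaining vertices. But once an independent dominating set has been chosen, every vertex of $P_k$ is dominated, so no further vertex is a legal choice and the sequence terminates with $|\widehat D|=\lceil k/2\rceil$; in any case $|\widehat D|\le \ggr(P_k)=k-1$, never $k$, and the same objection kills padding the cycle-side sequence up to $|\widehat D|=l$. You correctly flagged this tension as ``the main obstacle,'' but it cannot be resolved in your direction. The paper resolves it the opposite way: since $\ggr^t$ of the second factor is close to its order, one should maximize the \emph{length} of $D$ and accept a small $a(D)$. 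Concretely, it takes $D=(1,\ldots,k-2,k)$ in $P_k$ (so $a(D)=2$, $|\widehat D|=k-1$) and $D'=(1,\ldots,l-2)$ in $C_l$ (so $a(D')=1$, $|\widehat{D'}|=l-2$); for instance the term $kl-2k$ for even $k$ comes from $D'$ paired with $\ggr^t(P_k)=k$, namely $k+k(l-3)$, not from a large-$a(D)$ path sequence. With sequences that actually exist, your construction yields only $\lceil k/2\rceil\, l\approx kl/2$ on the path side, far below the bounds stated in the corollary.
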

\proof
The proof of all four cases is similar, the only difference is in total Grundy domination number of paths and cycles with respect to the parity of the length. That is
\begin{displaymath}
\ggr^t(P_k) =
\left\{ \begin{array}{l l}
k, & \textrm{$k$ is even}\\
k-1, & \textrm{$k$ is odd}\\
\end{array}
\right.
\end{displaymath}
and
\begin{displaymath}
\ggr^t(C_l) =
\left\{ \begin{array}{l l}
l-2, & \textrm{$l$ is even}\\
l-1, & \textrm{$l$ is odd.}\\
\end{array}
\right.
\end{displaymath}
Let $D=(1,\ldots , k-2,k)$ be a dominating sequence of $P_k$ and $D'=(1,\ldots , l-2)$ a dominating sequence of $C_l$. Then $a(D)=2$ and $a(D')=1.$ First let $k$ and $l$ be even. Since $\ggr^t(C_l)=l-2$, Proposition~\ref{direct} implies that $\ggr(P_k \times C_l) \geq  a(D)|V(C_l)|+\ggr^t(C_l)(|\widehat{D}|-a(D)) = 2l+ (k-3)(l-2)=kl-2k-l+6.$ Since $\ggr^t(P_k)=k$ it follows from Proposition~\ref{direct} that $\ggr(P_k\times C_l) \geq a(D')|V(P_k)|+\ggr^t(P_k)(|\widehat{D'}|-a(D'))= k+(l-3)k=kl-2k.$

If $k$ is even and $l$ is odd, then $\ggr(P_k \times C_l) \geq  a(D)|V(C_l)|+\ggr^t(C_l)(|\widehat{D}|-a(D)) = 2l+(k-3)(l-1)= kl-k-l+3$ and $\ggr(P_k\times C_l) \geq a(D')|V(P_k)|+\ggr^t(P_k)(|\widehat{D'}|-a(D'))= k+ k(l-3)=kl-2k.$

If $k$ is odd and $l$ is even, then $\ggr(P_k \times C_l) \geq  a(D)|V(C_l)|+\ggr^t(C_l)(|\widehat{D}|-a(D)) = 2l+(k-3)(l-2)= kl-2k-l+6$.

Finally let $k$ and $l$ be odd. Then $\ggr(P_k \times C_l) \geq  a(D)|V(C_l)|+\ggr^t(C_l)(|\widehat{D}|-a(D)) = 2l+(k-3)(l-1)= kl-k-l+3$. \qed

The proof of the next result is omitted, as it follows from the Grundy total domination number of a cycle and a dominating sequence of a cycle $C_n$ of length $n-2.$

\begin{corollary}
If $l \geq k \geq 4$, then
\begin{displaymath}
\ggr(C_k \times C_l) \geq
\left\{ \begin{array}{l l}
kl-2k-2l+6, & \textrm{$k,l$ are even}\\
kl-2k-l+3, & \textrm{$k$ is odd.}\\
kl-k-2l+3, & \textrm{$k$ is even, $l$ is odd.}\\
\end{array}
\right.
\end{displaymath}
\end{corollary}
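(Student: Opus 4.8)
The plan is to apply the lower bound of Proposition~\ref{direct} with $G=C_k$ and $H=C_l$, in direct analogy with the preceding corollary for $P_k\times C_l$; the only difference is that now both factors are cycles, so I would evaluate both inner maxima using a cyclic dominating sequence. Two ingredients suffice, both already available: the Grundy total domination numbers of cycles, namely $\ggr^t(C_m)=m-2$ when $m$ is even and $\ggr^t(C_m)=m-1$ when $m$ is odd, and a good dominating sequence of a single cycle. For the latter, with $V(C_n)=[n]$ and the cyclic edge set, I would take $D=(1,2,\dots,n-2)$. This is a legal dominating sequence of length $n-2$: vertex $1$ footprints $\{n,1,2\}$, each later vertex $i$ footprints the previously undominated vertex $i+1$, and together $\widehat D$ dominates $C_n$. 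Moreover every $i\ge 2$ is adjacent to its predecessor $i-1$ in $D$, so only vertex $1$ is counted, giving $a(D)=1$.

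Feeding this sequence into Proposition~\ref{direct} through the first inner maximum, i.e.\ using a dominating sequence of the factor $C_k$, produces the candidate bound
$$a(D)\,|V(C_l)|+\ggr^t(C_l)\bigl(|\widehat D|-a(D)\bigr)=l+(k-3)\,\ggr^t(C_l),$$
and through the second inner maximum, using a dominating sequence of $C_l$, the candidate bound
$$k+(l-3)\,\ggr^t(C_k).$$
The asserted lower bound is the larger of these two. I would then substitute the parity-dependent values of $\ggr^t$ and simplify: when $k$ and $l$ are both even both candidates equal $kl-2k-2l+6$; when $k$ is odd the second candidate equals $kl-2k-l+3$; and when $k$ is even and $l$ is odd the first candidate equals $kl-k-2l+3$. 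These are exactly the three values in the statement.

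Since all of this reduces to expanding products, the only real point of care is to verify that in each parity regime the candidate I have named is indeed the maximum, and this is where the hypothesis $l\ge k$ is used. The delicate subcase is $k$ odd and $l$ odd: there the two candidates are $kl-k-2l+3$ and $kl-2k-l+3$, whose difference is $l-k\ge 0$, so the second wins precisely because $l\ge k$. The remaining comparisons amount to $l-3>0$ and $k-3>0$, which hold since $k\ge 4$. I therefore expect no substantive obstacle beyond this short case check; I would also note that the stated bound is obtained from the simplest cyclic dominating sequence (with $a(D)=1$) and is not asserted to be the strongest bound that Proposition~\ref{direct} can yield.
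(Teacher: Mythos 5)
Your proposal is correct and is exactly the argument the paper intends: the paper omits the proof, noting only that the bound "follows from the Grundy total domination number of a cycle and a dominating sequence of a cycle $C_n$ of length $n-2$," which is precisely your application of Proposition~\ref{direct} with the sequence $D=(1,\dots,n-2)$, $a(D)=1$, and the parity-dependent values of $\ggr^t(C_m)$. Your case analysis (including the observation that $l\ge k$ is needed only in the odd--odd subcase, and $k\ge 4$ for the comparisons $l-3>0$ and $k-3>0$) correctly fills in the details the paper left out.
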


\begin{corollary}\label{direct:path}
If $2\le k\le l$, then
\begin{displaymath}
\ggr(P_k \times P_l) \geq
\left\{ \begin{array}{l l}
kl-k, & \textrm{$k$ is even}\\
kl-k-l+3, & \textrm{$k,l$ are odd.}\\
\max\{kl-l, kl-k-l+3\}, & \textrm{$k$ is odd, $l$ is even.}\\
\end{array}
\right.
\end{displaymath}
\end{corollary}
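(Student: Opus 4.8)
The plan is to apply Proposition~\ref{direct} directly to the two factors $P_k$ and $P_l$, feeding it well-chosen dominating sequences of a path together with the Grundy total domination numbers $\ggr^t(P_n)=n$ for $n$ even and $\ggr^t(P_n)=n-1$ for $n$ odd (recorded in the proof of the corollary on $P_k\times C_l$). Since the statement is only a lower bound, it suffices to exhibit, in each parity case, one dominating sequence whose associated quantity $a(D)|V(H)|+\ggr^t(H)(|\widehat{D}|-a(D))$ reaches the claimed value; no optimality argument is needed. The key structural remark is that this quantity equals $(|V(H)|-\ggr^t(H))\,a(D)+\ggr^t(H)\,|\widehat{D}|$, so the coefficient of $a(D)$ is $0$ when $H$ is an even path and $1$ when $H$ is an odd path; accordingly I either just maximize $|\widehat{D}|$, or simultaneously push $a(D)$ up.

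Two dominating sequences of $P_n$ will do all the work. First, the Grundy sequence $D=(1,2,\dots,n-1)$, which is legal (vertex $i$ footprints $i+1$, and $n-1$ footprints $n$), has $|\widehat{D}|=n-1$ and $a(D)=1$, since only the first vertex is a non-adjacent start. Second, the sequence $D=(1,2,\dots,n-2,n)$, again legal ($n$ footprints itself because its only neighbor $n-1$ is already dominated), has $|\widehat{D}|=n-1$ and $a(D)=2$, as $n$ is adjacent to none of $1,\dots,n-2$. I will verify these two facts once and reuse them.

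Then I would treat the three cases. If $k$ is even, I take $G=P_k$ (so $\ggr^t(P_k)=k$) and the Grundy sequence $D'=(1,\dots,l-1)$ of $H=P_l$; Proposition~\ref{direct} gives $\ggr(P_k\times P_l)\ge a(D')k+\ggr^t(P_k)(|\widehat{D'}|-a(D'))=k+k(l-2)=kl-k$, independently of the parity of $l$. If $k$ and $l$ are both odd, I take the sequence $D=(1,\dots,k-2,k)$ of $P_k$ against $\ggr^t(P_l)=l-1$, obtaining $\ge 2l+(l-1)(k-3)=kl-k-l+3$. Finally, if $k$ is odd and $l$ is even, I apply the proposition twice: once with the Grundy sequence of $P_k$ against $\ggr^t(P_l)=l$, giving $\ge l(k-1)=kl-l$, and once with the sequence $(1,\dots,l-2,l)$ of $P_l$ against $\ggr^t(P_k)=k-1$, giving $\ge 2k+(k-1)(l-3)=kl-k-l+3$; together these yield the claimed maximum.

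The computations are routine, so the only real care is bookkeeping: in each case one must pick the correct \emph{orientation} (which factor supplies the dominating sequence and which supplies the $\ggr^t$ value) so that the parity of $\ggr^t$ matches the target expression, and one must correctly read off $a(D)$. The mildly delicate point is the mixed-parity case, where neither single application of Proposition~\ref{direct} alone reaches $\max\{kl-l,\,kl-k-l+3\}$, so both orientations are genuinely needed; here one may also note that for odd $k\ge 3$ the two bounds coincide at $k=3$ and otherwise $kl-l$ dominates.
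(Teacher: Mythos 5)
Your proposal is correct and follows essentially the same route as the paper: both apply Proposition~\ref{direct} with explicit dominating sequences of paths and the parity-dependent values of $\ggr^t(P_n)$, including the same two-orientation argument in the mixed case $k$ odd, $l$ even. The only difference is cosmetic: where the even factor supplies $\ggr^t$ (so the coefficient $|V(H)|-\ggr^t(H)$ of $a(D)$ vanishes), you use the Grundy sequence $(1,\dots,n-1)$ with $a(D)=1$ while the paper reuses $(1,\dots,n-2,n)$ with $a(D)=2$; both yield the same bound since only $|\widehat{D}|$ matters there.
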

\proof
Let $D=(1,\ldots , k-2,k)$ be a dominating sequence of $P_k$ and $D'=(1,\ldots , l-2,l)$ a dominating sequence of $P_l$. Then $a(D)=a(D')=2$. If $k$ is even, then it follows from Proposition~\ref{direct} that $\ggr(P_k\times P_l) \geq a(D')|V(P_k)|+\ggr^t(P_k)(|\widehat{D'}|-a(D'))= 2k+(l-3)k=kl-k.$ If both $k$ and $l$ are odd, then
$\ggr(P_k \times P_l) \geq  2l+(k-3)(l-1)= kl-k-l+3.$ If $k$ is odd and $l$ is even, then $\ggr(P_k \times P_l) \geq \max\{ 2l+(k-3)l, 2k+(l-3)(k-1)\}= \max\{kl-l,kl-k-l+3\}.$

\qed

\begin{proposition}\label{direct:upper}
If $2\le k\le l$, then $\ggr(P_k\times P_l)\le kl-k$.
\end{proposition}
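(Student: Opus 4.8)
The plan is to establish the matching upper bound through Lemma~\ref{boundary}, mirroring the treatment of the grid in Theorem~\ref{cartesiangrid}(a). Write $V=V(P_k\times P_l)=[k]\times[l]$, so that $|V|=kl$ and the target $\ggr(P_k\times P_l)\le kl-k$ reads $\ggr\le|V|-k$. By Lemma~\ref{boundary} it therefore suffices to show that every Grundy dominating sequence $S$ of $P_k\times P_l$ has an initial segment $S_i$ with $|\partial\widehat{S_i}|\ge k$; since the boundary sizes of the initial segments of a legal sequence are non-decreasing, this propagates to the whole set $\widehat{S}$ and gives the bound.

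First I would record that, in the direct product, $(i,j)$ and $(i',j')$ are adjacent exactly when $|i-i'|=|j-j'|=1$, so all edges run along the two diagonal directions and $P_k\times P_l$ splits, according to the parity of $i+j$, into two grid-like components whose induced paths are the diagonals $t\mapsto(i+t,j+t)$ and the anti-diagonals $t\mapsto(i+t,j-t)$. The device replacing the ``each column meets the boundary'' step of Theorem~\ref{cartesiangrid}(a) is then the following: whenever an induced diagonal path $D$ lies entirely inside $\widehat{S_i}$, every transverse diagonal $D'$ crossing $D$ that is not itself contained in $\widehat{S_i}$ contributes a vertex to $\partial\widehat{S_i}$, because $\widehat{S_i}\cap D'$ is a nonempty proper subset of the path $D'$ and hence has a neighbour on $D'$ lying outside $\widehat{S_i}$. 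As distinct transverse diagonals are vertex-disjoint, each crossing line yields its own private boundary vertex.

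Accordingly I would take $i$ to be the least index at which $\widehat{S_i}$ first contains a full diagonal of maximal length $k$ (in either direction). Such a diagonal crosses exactly $k$ transverse diagonals, and minimality of $i$ forces essentially all of them to be proper, which should give $|\partial\widehat{S_i}|\ge k$ and close the argument. The hard part will be the end-of-diamond book-keeping: a maximal diagonal running into a corner crosses very short transverse diagonals---down to length one---that carry no boundary vertex, so the naive count can fall short of $k$; and one must still dispose of the complementary situation in which no length-$k$ diagonal is ever completed. Both points, most delicate when $l=k$, call for a case analysis in the spirit of the subcases in the proof of Theorem~\ref{cartesiangrid}(c), possibly after reducing to the two parity components and treating them separately. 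Combined with the lower bound of Corollary~\ref{direct:path}, this upper bound pins down $\ggr(P_k\times P_l)=kl-k$ whenever $k$ is even.
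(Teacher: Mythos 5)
Your skeleton matches the paper's (Lemma~\ref{boundary}, the two parity components, diagonal lines, and counting one boundary vertex per line using disjointness), but your pivot index is not the paper's, and the second difficulty you defer to ``case analysis'' is not book-keeping --- it is fatal to the approach as you set it up. You trigger on the first moment a full diagonal of length $k$ is \emph{contained} in $\widehat{S_i}$. Nothing forces such a moment to exist, and the complementary case cannot be salvaged by counting: $P_k\times P_l$ has only $2(l-k+1)$ diagonals of length $k$, so if none of them is ever completed, the disjointness argument yields at best $\ggr\le kl-2(l-k+1)$, which is weaker than $kl-k$ whenever $l<\tfrac{3k}{2}-1$. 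In the square case $k=l\ge 3$ there are just two such diagonals and the fallback gives only $k^2-2$, far from $k^2-k$. This is exactly where the analogy with Theorem~\ref{cartesiangrid}(a) breaks down: there, the fallback case \emph{is} the bound, because the grid has $k$ pairwise disjoint columns each missing a vertex, whereas here the long diagonals are too few. Your first flagged gap (length-one transverse diagonals at the corners) is also real: in $P_5\times P_5$ the main diagonal crosses transverse diagonals of lengths $1,3,5,3,1$, so the naive count gives $k-2$, not $k$.

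The paper's proof replaces containment by a \emph{hitting} condition, and its existence is forced by domination rather than by legality --- this is the idea your proposal is missing. Within one parity component, fix two families of long diagonal lines, one per direction, restricted to the middle band (about $k/2$ lines each, every line of one family crossing every line of the other inside the grid; this restriction is also what eliminates your corner problem, since the short diagonals never enter the count). If at the end some line of each family contained no vertex of $\widehat{S}$, then their crossing vertex would be undominated, because in the direct product $N[(x,y)]$ lies inside the union of the two diagonal lines through $(x,y)$. Hence, since $S$ is a dominating sequence, there is a first index $i$ at which one of the two families is entirely hit by $\widehat{S_i}$. At that moment one extracts a private boundary vertex from each line of the other family (a line meeting $\widehat{S_i}$ but not contained in it has, along its path, an outside vertex adjacent to an inside vertex; a line disjoint from $\widehat{S_i}$ instead forces a boundary vertex on each line of the hit family), giving $|\partial\widehat{S_i}|\ge\min$ of the two family sizes. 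Summing the losses $\lfloor k/2\rfloor$ and $\lceil k/2\rceil$ over the two components gives $kl-k$. So the correct fix is not a finer case analysis around your trigger, but a different trigger altogether.
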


\proof
The proof is very similar to that of the upper bound in Theorem \ref{cartesiangrid} \textbf{(a)}. The graph $P_k\times P_l$ has two components $C_{odd}=\{(a,b): a+b ~\textnormal{is odd}\}$ and $C_{even}=\{(a,b): a+b ~\textnormal{is even}\}$. We will prove that the length of a dominating sequence is $\lfloor k/2\rfloor$ less than the number of vertices in one of them and $\lceil k/2\rceil$ less than the number of vertices in the other. Let us consider the component $C_{odd}$. Let $L_{odd,+}$ denote the set of lines $\{(x,y): x+y=c\}$ where $c$ is an odd integer with $\lfloor k/2+1\rfloor \le c\le \lfloor 3k/2+1\rfloor$, and let $L_{odd,-}$ denote the set of lines $\{(x,y): x-y=c\}$ where $c$ is an odd integer with $1-\lfloor k/2\rfloor \le c\le \lceil k/2\rceil$. Finally, let $S=((a_1,b_1),(a_2,b_2),\dots,(a_m,b_m))$ be a dominating sequence of $C_{odd}$ and let $i$ be the smallest index for which either every line in $L_{odd,-}$ contains a vertex from $\widehat{S_i}$ or every line in $L_{odd,+}$ contains a vertex from $\widehat{S_i}$ (or both). Such an index exists as if not then the vertex $\ell_-\cap\ell_+$ with $\ell_-\in L_{odd,-}, \ell_+\in L_{odd,+}$ is not dominated by $\widehat{S}$ if none of $\ell_-$ and $\ell_+$ contain a vertex from $\widehat{S}$. Observe that $|\partial \widehat{S_i}|\ge \min\{|L_{odd,-}|,|L_{odd,+}|\}$ holds as if every line in $L_{odd,-}$ contains a vertex from $\widehat{S_i}$, then every line in $L_{odd,+}$ contains a vertex from $\partial \widehat{S_i}$ and vice versa.
\qed

Corollary~\ref{direct:path} and Proposition~\ref{direct:upper} give the following exact result.

\begin{corollary}
Let $2 \leq k \leq l$ and let $k$ be even. Then $\ggr(P_k \times P_l)=kl-k.$
\end{corollary}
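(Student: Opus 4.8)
The plan is to obtain the exact value by sandwiching $\ggr(P_k\times P_l)$ between matching lower and upper bounds, both of which are already in hand from the two immediately preceding results. First I would invoke Corollary~\ref{direct:path}: in the regime where $k$ is even (and $2\le k\le l$), that corollary supplies the lower bound $\ggr(P_k\times P_l)\ge kl-k$. This bound comes from the explicit construction of Proposition~\ref{direct}, applied with the dominating sequence $D'=(1,\ldots,l-2,l)$ of $P_l$ together with a Grundy total dominating sequence of $P_k$. The evenness of $k$ enters precisely because $\ggr^t(P_k)=k$ in that case, and it is this value that makes the construction cover all but exactly $k$ of the $kl$ vertices.

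Next I would apply Proposition~\ref{direct:upper}, which yields the upper bound $\ggr(P_k\times P_l)\le kl-k$ for every $2\le k\le l$, with no parity restriction. Combining the two inequalities gives $\ggr(P_k\times P_l)=kl-k$, which completes the argument.

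Since both halves are already established, I do not anticipate a genuine obstacle: the entire content of the corollary lies in recognizing that the hypothesis ``$k$ even'' is exactly the regime in which the general lower bound of Corollary~\ref{direct:path} meets the universal upper bound of Proposition~\ref{direct:upper}. The one point worth verifying is that the hypotheses align, and they do, as Proposition~\ref{direct:upper} requires $2\le k\le l$ and the even-$k$ branch of Corollary~\ref{direct:path} requires the same, leaving no gap. For odd $k$ the lower bound of Corollary~\ref{direct:path} drops strictly below $kl-k$ (reflecting $\ggr^t(P_k)=k-1$), which is why the exact value can be asserted here only under the evenness assumption.

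\begin{corollary}
Let $2 \leq k \leq l$ and let $k$ be even. Then $\ggr(P_k \times P_l)=kl-k.$
\end{corollary}

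\proof
Since $k$ is even and $2\le k\le l$, Corollary~\ref{direct:path} gives $\ggr(P_k\times P_l)\ge kl-k$. On the other hand, Proposition~\ref{direct:upper} gives $\ggr(P_k\times P_l)\le kl-k$ for all $2\le k\le l$. Combining these two inequalities, we conclude that $\ggr(P_k\times P_l)=kl-k$.
\qed
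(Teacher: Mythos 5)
Your proof is correct and matches the paper's own derivation exactly: the paper obtains this corollary by combining the lower bound of Corollary~\ref{direct:path} (which for even $k$ equals $kl-k$, via $\ggr^t(P_k)=k$) with the upper bound of Proposition~\ref{direct:upper}. Your explanatory remarks about why evenness of $k$ is essential are accurate and consistent with the paper's computations.
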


\section{Strong product}
\label{sec:strong}

In this section we first observe that $\gamma_{gr}(G\boxtimes H)\ge
\gamma_{gr}(G)\gamma_{gr}(H)$ holds for any graphs $G$ and $H$, and
conjecture that it always holds with equality. Among other results
proved here we confirm the conjecture for strong products of
caterpillars with arbitrary graphs.

\begin{proposition}\label{strong}
For any  graphs $G$ and $H$, $$\gamma_{gr}(G\boxtimes H)\ge
\gamma_{gr}(G)\gamma_{gr}(H).$$
\end{proposition}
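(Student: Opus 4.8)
The plan is to build an explicit dominating sequence of $G\boxtimes H$ of length $\gamma_{gr}(G)\gamma_{gr}(H)$, exploiting the fact that closed neighborhoods factor in the strong product: for every vertex $(g,h)$ one has $N[(g,h)]=N_G[g]\times N_H[h]$. Concretely, I would take a Grundy dominating sequence $(d_1,\ldots,d_p)$ of $G$ with $p=\gamma_{gr}(G)$ and a Grundy dominating sequence $(e_1,\ldots,e_q)$ of $H$ with $q=\gamma_{gr}(H)$, and order the $pq$ vertices $(d_i,e_j)$ lexicographically: $(d_i,e_j)$ precedes $(d_{i'},e_{j'})$ iff $i<i'$, or $i=i'$ and $j<j'$. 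Call this sequence $S$; the claim is that $S$ is legal and dominating.

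To verify legality I would fix a term $(d_i,e_j)$. Since $(d_1,\ldots,d_p)$ is legal, $d_i$ footprints some $g\in N_G[d_i]\setminus\bigcup_{i'<i}N_G[d_{i'}]$, and likewise $e_j$ footprints some $h\in N_H[e_j]\setminus\bigcup_{j'<j}N_H[e_{j'}]$. Then $(g,h)\in N_G[d_i]\times N_H[e_j]=N[(d_i,e_j)]$, and I claim $(g,h)$ is a private neighbor of $(d_i,e_j)$. Indeed, a predecessor $(d_{i'},e_{j'})$ dominates $(g,h)$ only if $g\in N_G[d_{i'}]$ and $h\in N_H[e_{j'}]$. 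In the chosen order a predecessor has either $i'<i$, which is impossible because $g\notin N_G[d_{i'}]$, or $i'=i$ and $j'<j$, which is impossible because $h\notin N_H[e_{j'}]$. Hence $(g,h)$ is freshly dominated and $(d_i,e_j)$ is a legal choice.

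It then remains to observe that $\widehat{S}=\{(d_i,e_j)\}$ is a dominating set: given any $(g,h)$, choose $d_i$ with $g\in N_G[d_i]$ and $e_j$ with $h\in N_H[e_j]$ (these exist because both factor sequences are dominating), whence $(g,h)\in N[(d_i,e_j)]$. Thus $S$ is a dominating sequence of length $pq$, and so $\gamma_{gr}(G\boxtimes H)\ge pq=\gamma_{gr}(G)\gamma_{gr}(H)$.

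As for difficulty, there is essentially no obstacle: the whole argument rests on the single structural identity $N[(g,h)]=N_G[g]\times N_H[h]$, which decouples the two coordinates. The only point requiring care is the legality check, where one must confirm that neither type of predecessor in the lexicographic order can already dominate the product footprint $(g,h)$; this follows at once from the privacy of $g$ in $G$ and of $h$ in $H$. This also makes clear why the reverse inequality, which is the content of the conjecture, is the genuinely hard direction.
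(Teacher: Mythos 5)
Your proof is correct and follows essentially the same route as the paper: both take Grundy dominating sequences of the factors, list the $pq$ pairs $(d_i,e_j)$ in lexicographic order, and verify legality by showing each $(d_i,e_j)$ footprints the product of the two factor footprints, using $N[(g,h)]=N_G[g]\times N_H[h]$. The paper states this more tersely (it simply asserts that $(d_i,d_j')$ footprints $(u_i,u_j')$), while you spell out the case analysis on predecessors, which is exactly the justification the paper leaves implicit.
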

\proof Let $p=\gamma_{gr}(G),q= \gamma_{gr}(H)$, let
$(d_1,\ldots,d_p)$ be a Grundy dominating sequence in $G$, and
$(d'_1,\ldots,d'_q)$ be a Grundy dominating sequence in $H$. Let
$d_i$ footprint $u_i$, $i \in [p]$ and let $d'_j$ footprint $u_j'$,
$j \in [q].$ Consider the following sequence
$$S=((d_1,d'_1),\ldots,(d_1,d'_q),(d_2,d'_1),\ldots,(d_2,d'_q),\ldots,(d_p,d'_1),\ldots,(d_p,d'_q))$$
of vertices of $G \boxtimes H$. It is clear that $\widehat{S}$ is a
dominating set. Moreover a chosen vertex $(d_i,d_j')$ is legal since
it footprints $(u_i,u_j')$. Hence $\gamma_{gr}(G\boxtimes H)\ge
pq=\gamma_{gr}(G)\gamma_{gr}(H)$. \qed

We conjecture that the lower bound of Proposition~\ref{strong} is always tight:

\begin{conjecture}\label{con:strong}
For any graphs $G$ and $H$, $\gamma_{gr}(G\boxtimes H)= \gamma_{gr}(G)\gamma_{gr}(H).$
\end{conjecture}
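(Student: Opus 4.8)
The forward inequality is Proposition~\ref{strong}, so the entire content is the reverse bound $\ggr(G\boxtimes H)\le \ggr(G)\ggr(H)$. My plan rests on the one structural fact that makes the strong product special: the closed neighborhood factors as $N_{G\boxtimes H}[(g,h)]=N_G[g]\times N_H[h]$. Consequently the family of closed neighborhoods of $G\boxtimes H$ is exactly the ``product'' family $\{N_G[g]\times N_H[h]\}$ over the ground set $V(G)\times V(H)$, and a legal closed-neighborhood sequence is a sequence of cells in which each cell footprints some point $(x,y)$ with $x\in N_G[g]$, $y\in N_H[h]$ that no earlier cell $(g',h')$ covered, i.e.\ $x\notin N_G[g']$ \emph{or} $y\notin N_H[h']$. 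I would first record this reduction and then attack the bound through two complementary routes.

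The quick route pushes the two upper bounds already in the paper through the product. Using $N_{G\boxtimes H}[(g,h)]=N_G[g]\times N_H[h]$ one checks that products $Q\times Q'$ of cliques taken from edge clique covers of $G$ and $H$ form an edge clique cover of $G\boxtimes H$: every one of the three edge types of the strong product is caught, using that neither factor has isolated vertices so every vertex lies in a cover clique. Hence $\theta_e(G\boxtimes H)\le \theta_e(G)\theta_e(H)$, and with Proposition~\ref{prp:edge_cover} and Proposition~\ref{strong} this settles the conjecture for all pairs of factors with $\ggr=\theta_e$ (in particular caterpillars, and more generally any graphs meeting the edge clique cover bound). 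A parallel linear-algebraic route observes that a legal sequence selects rows of the closed neighborhood matrix $N(G)$ which, after permuting to the footprint columns, form a unit lower-triangular block; these rows are therefore independent over every field, so $\ggr(G)\le \mathrm{rank}_{\mathbb{F}}\,N(G)$. Since $N(G\boxtimes H)=N(G)\otimes N(H)$ the rank multiplies, giving $\ggr(G\boxtimes H)\le \mathrm{rank}\,N(G)\cdot\mathrm{rank}\,N(H)$ and hence the conjecture whenever both factors meet the rank bound over a common field. These two tools are genuinely incomparable and each closes a large subclass, but neither is exact in general (already $\ggr(C_5)=3$ while $\theta_e(C_5)=5$ and $\mathrm{rank}\,N(C_5)=5$), so they cannot by themselves deliver the full statement.

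For the general case I would argue directly with the footprinting data. Partition an optimal sequence $S$ of $G\boxtimes H$ by first coordinate into blocks $S_g=\{i:g_i=g\}$. Within a fixed column $g$ every footprint $(x_i,y_i)$ has $x_i\in N_G[g]=N_G[g_j]$ for the earlier same-column steps $j$, so novelty must come from $y_i\notin N_H[h_j]$; this shows that the second coordinates inside $S_g$ form a legal sequence in $H$, whence $|S_g|\le \ggr(H)$, and symmetrically each row contributes at most $\ggr(G)$. The target $\ggr(G)\ggr(H)$ then amounts to showing that the columns actually occurring behave like a legal sequence of $G$ of length at most $\ggr(G)$ once weighted by their occupancy. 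The plan is to build such a $G$-sequence by an amortized charging scheme that assigns to each step of $S$ a pair consisting of a $G$-footprint and an $H$-footprint, so that the charges inject into a product of Grundy sequences of the factors.

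I expect the charging to be exactly where the difficulty concentrates, and this is the main obstacle. The trouble is the entanglement inherent in the disjunction ``$x\notin N_G[g']$ \emph{or} $y\notin N_H[h']$'': a cell may be footprinted purely because of novelty in the $H$-coordinate even when its $G$-coordinate repeats an already-used column, so the naive first-occurrence projection to $G$ need not be legal, and a column may legitimately be reopened with no corresponding progress in $G$. Turning the two one-sided bounds ($|S_g|\le\ggr(H)$ and the row analogue) into the single product bound therefore requires a genuinely two-dimensional potential or exchange argument that tracks progress in both factors simultaneously and charges every ``$H$-novel but $G$-stale'' step against a quantity that cannot exceed $\ggr(G)\ggr(H)$ in total. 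Designing a potential that is monotone under the strong-product footprinting rule is the crux, and is presumably why the statement remains a conjecture rather than a theorem.
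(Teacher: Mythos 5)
The statement you are asked to prove is labelled a \emph{conjecture} in the paper, and the paper in fact offers no proof of it: only the lower bound (Proposition~\ref{strong}) and verifications in special cases (Corollary~\ref{caterpillar} for caterpillars, and closure under substituting complete graphs). Your proposal, to its credit, is honest about this: after two sound partial routes you explicitly concede that the general charging scheme is not constructed, so what you have is not a proof of Conjecture~\ref{con:strong} but a collection of special cases --- which is exactly the status of the problem. Within that scope, your ingredients check out. The factorization $N_{G\boxtimes H}[(g,h)]=N_G[g]\times N_H[h]$ is correct; the product clique cover gives $\theta_e(G\boxtimes H)\le\theta_e(G)\theta_e(H)$ (with your isolated-vertex caveat), generalizing the paper's Proposition~\ref{Tfree}; the triangular-submatrix argument for $\ggr(G)\le\mathrm{rank}_{\mathbb F}\,N(G)$ together with $N(G\boxtimes H)=N(G)\otimes N(H)$ is valid and is a tool the paper does not use at all; and your column-block bound $|S_g|\le\ggr(H)$ is correct --- it reproves the paper's Proposition~\ref{Grundy-strong-lower}. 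Your diagnosis of the obstruction (the disjunction ``$x\notin N_G[g']$ or $y\notin N_H[h_j]$'' lets a column be reopened with no $G$-progress, so the first-occurrence projection to $G$ need not be legal) is precisely the difficulty; contrast this with the lexicographic product, where the paper's Theorem~\ref{thm:lex} shows the analogous projection \emph{does} work because adjacency in $G$ alone dominates whole layers.

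The concrete gap relative to what the paper actually achieves is this: both of your routes require \emph{both} factors to be tight for the respective bound ($\ggr=\theta_e$, or $\ggr=\mathrm{rank}\,N$), so they cover pairs of caterpillars but not, say, $P_k\boxtimes C_l$, since $\ggr(C_l)=l-2<l=\theta_e(C_l)$ and (as you note for $C_5$) the rank bound is also slack on odd cycles. The paper's key lemma that you are missing is Proposition~\ref{G-v}: if $v$ is a simplicial vertex of $G$, then $\ggr(G\boxtimes H)\le\ggr(H)+\ggr((G-v)\boxtimes H)$, proved by an exchange argument --- take a Grundy dominating sequence maximizing the number of vertices in the layer $^v\!H$, and observe that any footprinter $(g,h)\notin{}^v\!H$ of a vertex $(v,h')$ can be replaced by $(v,h)$ because $N_G[v]\subseteq N_G[g]$, contradicting maximality; hence the layer splits off cleanly and induction peels leaves of a caterpillar one at a time. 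This one-sided peeling is what lets the paper keep the second factor $H$ completely arbitrary (Corollary~\ref{caterpillar}), whereas your global bounds degrade both factors simultaneously. If you want to push your program, the natural next step is not the two-dimensional potential you sketch but an extension of this exchange/peeling argument beyond simplicial vertices.
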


Recall that the edge cover number $\theta_e(G)$ presents an upper bound for the $\ggr(G)$ in any graph $G$. We next show that this parameter behaves nicely in strong products of triangle-free graphs.

\begin{proposition}\label{Tfree}
If $G$ and $H$ are triangle-free graphs, then $\theta_e(G \boxtimes H) =|E(G)|\cdot |E(H)|.$
\end{proposition}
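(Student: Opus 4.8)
The plan is to prove the identity $\theta_e(G \boxtimes H) = |E(G)|\cdot|E(H)|$ by first understanding what the maximal cliques of $G \boxtimes H$ look like when both factors are triangle-free, and then counting how many of them are needed to cover every edge. The key structural observation I would establish first is that, because $G$ and $H$ are triangle-free, every edge of $G \boxtimes H$ lies in a clique of size at most $4$, and in fact the maximal cliques are exactly the ``$4$-cycles'' coming from an edge $g_1g_2 \in E(G)$ together with an edge $h_1h_2 \in E(H)$: the vertex set $\{(g_1,h_1),(g_1,h_2),(g_2,h_1),(g_2,h_2)\}$ induces a $K_4$ in the strong product. I would verify that these are cliques directly from the adjacency definition of $\boxtimes$ (each of the six pairs is adjacent via one of the three disjunctive conditions), and that no clique can be larger: a clique of size $5$ would force either three vertices sharing a common $G$-coordinate and pairwise adjacent in the $H$-layer (a triangle in $H$) or the symmetric situation in $G$, contradicting triangle-freeness.

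Next I would prove the upper bound $\theta_e(G \boxtimes H) \le |E(G)|\cdot|E(H)|$ by exhibiting the explicit edge clique cover $\mathcal{Q} = \{\, Q_{e,f} : e \in E(G),\ f \in E(H)\,\}$, where $Q_{e,f}$ is the $K_4$ described above for $e = g_1g_2$ and $f = h_1h_2$. The content here is to check that every edge of $G \boxtimes H$ is covered. An arbitrary edge of the strong product joins $(g_1,h_1)$ and $(g_2,h_2)$ where either $g_1 = g_2$ with $h_1h_2 \in E(H)$, or $h_1 = h_2$ with $g_1g_2 \in E(G)$, or both $g_1g_2 \in E(G)$ and $h_1h_2 \in E(H)$. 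In the mixed case the edge already sits inside $Q_{g_1g_2,\,h_1h_2}$; in the two ``pure'' cases I would pick any incident edge in the other factor (which exists once we assume no isolated vertices) to name a suitable $Q_{e,f}$ containing the edge. This shows $\mathcal{Q}$ covers all edges, so $\theta_e(G \boxtimes H) \le |E(G)|\cdot|E(H)|$.

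The harder direction, and what I expect to be the main obstacle, is the lower bound $\theta_e(G \boxtimes H) \ge |E(G)|\cdot|E(H)|$. For this I would argue that each of the $|E(G)|\cdot|E(H)|$ ``mixed'' edges, namely those joining $(g_1,h_1)$ and $(g_2,h_2)$ with $g_1g_2 \in E(G)$ \emph{and} $h_1h_2 \in E(H)$, forces a distinct clique in any cover. The crucial claim is that any clique of $G\boxtimes H$ containing such a mixed edge must be contained in the single $K_4$ block $Q_{g_1g_2,\,h_1h_2}$; hence a clique can cover at most one mixed edge, because two mixed edges in a common clique would again produce a triangle in $G$ or in $H$. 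Therefore a cover needs at least one clique per mixed edge, and the number of mixed edges is precisely $|E(G)|\cdot|E(H)|$ (each ordered pair of a $G$-edge and an $H$-edge yields exactly one such edge in each of the two diagonal directions, but the pairing with the two diagonals must be counted carefully). Combining the two bounds yields the equality. The delicate point to get right is the counting of mixed edges versus the two diagonal orientations of each $Q_{e,f}$, and establishing rigorously that the triangle-free hypothesis rules out any clique spanning two different blocks; I would treat that clique-confinement lemma as the technical heart of the argument.
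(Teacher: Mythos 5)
Your construction of the blocks $Q_{e,f}=\{g_1,g_2\}\times\{h_1,h_2\}$, the verification that they form an edge clique cover (the pure edges being absorbed via an incident edge of the other factor), and your confinement lemma --- triangle-freeness forces any clique containing a mixed edge $(g_1,h_1)(g_2,h_2)$ to lie inside $Q_{g_1g_2,\,h_1h_2}$ --- are all correct, and this is precisely the paper's proof: in the paper's phrasing, each such edge lies in a \emph{unique maximal} clique $Q_{e,f}$. The gap is in the step that converts confinement into the lower bound. Your claim that ``a clique can cover at most one mixed edge, because two mixed edges in a common clique would again produce a triangle in $G$ or in $H$'' is false: $Q_{e,f}$ is itself a clique and contains exactly two mixed edges, namely its two diagonals $(g_1,h_1)(g_2,h_2)$ and $(g_1,h_2)(g_2,h_1)$, and no triangle arises in either factor. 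The companion count is also wrong: there are $2\,|E(G)|\,|E(H)|$ mixed edges, two per pair $(e,f)$, not $|E(G)|\,|E(H)|$. As written the two errors cancel numerically, but a proof cannot rest on them; indeed ``one clique per mixed edge'' combined with the true count would yield $\theta_e(G\boxtimes H)\ge 2\,|E(G)|\,|E(H)|$, which your own cover shows is absurd. You flagged exactly this point as delicate and left it unresolved, so the lower bound is genuinely incomplete.

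The repair needs nothing beyond the lemma you already have: count blocks, not mixed edges. A mixed edge determines its block (its endpoints determine $e$ and $f$), so the mixed-edge sets of distinct blocks are disjoint; by confinement, a clique containing a mixed edge of $Q_{e,f}$ lies inside $Q_{e,f}$ and therefore contains no mixed edge of any other block. Hence in any edge clique cover, the cliques chosen to cover mixed edges of distinct pairs $(e,f)$ must be distinct, and since each of the $|E(G)|\,|E(H)|$ pairs has at least one mixed edge to cover, $\theta_e(G\boxtimes H)\ge |E(G)|\,|E(H)|$; this is exactly the paper's one-line deduction from uniqueness of the maximal clique. One further remark in your favor: you correctly observed that the covering step needs factors without isolated vertices. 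If $G$ has an isolated vertex $g_0$ and $E(H)\neq\emptyset$, the edges of the layer $^{g_0}\!H$ lie in no $Q_{e,f}$ and the stated equality in fact fails; this hypothesis is left implicit in the paper (its applications to paths and cycles are unaffected).
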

\proof Let $e=gg'$ and $f=hh'$ be arbitrary edges in $G$ and $H$,
respectively.  Then the edge $(g,h)(g'h')$ of $G \boxtimes H$ lies
in a unique maximal clique $Q_{e,f}$ induced by the four vertices in
$\{g,g'\}\times \{h,h'\}$. It follows that $\theta_e(G \boxtimes H)
\geq |E(G)|\cdot |E(H)|=\theta_e(G) \theta_e(H).$ On the other hand
the set of cliques $\{Q_{e,f}:\ e \in E(G), f \in E(H) \}$ forms an
edge clique cover of $G \boxtimes H$, so that $\theta_e(G \boxtimes
H) \leq |E(G)|\cdot |E(H)|.$ \qed

\begin{corollary}
If $k,l \geq 2,$ then $\ggr(P_k\boxtimes P_l)=(k-1)(l-1).$
\end{corollary}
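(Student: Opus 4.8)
The goal is to establish $\ggr(P_k \boxtimes P_l) = (k-1)(l-1)$ for $k,l \ge 2$. The plan is to obtain this as an almost immediate consequence of the machinery already developed, combining a lower bound from the product formula of Proposition~\ref{strong} with a matching upper bound from the edge clique cover results.

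First I would handle the lower bound. Since $P_k$ and $P_l$ are paths, we know (as noted after Proposition~\ref{prp:edge_cover}, and used throughout) that $\ggr(P_k)=k-1$ and $\ggr(P_l)=l-1$. Applying Proposition~\ref{strong} directly gives
$$\ggr(P_k \boxtimes P_l) \ge \ggr(P_k)\,\ggr(P_l) = (k-1)(l-1).$$
This step is entirely routine, as it only instantiates the general lower bound at the two path factors.

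Next I would prove the reverse inequality via the edge clique cover bound. Paths are triangle-free, so Proposition~\ref{Tfree} applies with $G=P_k$ and $H=P_l$, yielding $\theta_e(P_k \boxtimes P_l) = |E(P_k)|\cdot|E(P_l)| = (k-1)(l-1)$. Now $P_k \boxtimes P_l$ has no isolated vertices (each factor has at least two vertices and hence at least one edge, so every product vertex has a neighbor in the strong product), which means Proposition~\ref{prp:edge_cover} applies and gives
$$\ggr(P_k \boxtimes P_l) \le \theta_e(P_k \boxtimes P_l) = (k-1)(l-1).$$
Combining the two inequalities yields the claimed equality.

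There is essentially no serious obstacle here, since both directions are direct applications of previously established results; the only thing to verify carefully is the hypothesis that the product has no isolated vertices, which is needed for Proposition~\ref{prp:edge_cover} but holds trivially once $k,l \ge 2$. The one subtle point worth a line of comment is that this corollary confirms Conjecture~\ref{con:strong} in the special case of two paths: the lower bound $\ggr(P_k)\ggr(P_l)$ and the upper bound $\theta_e(P_k\boxtimes P_l)$ happen to coincide precisely because, for triangle-free factors, the edge clique cover number factorizes and matches the product of the individual Grundy domination numbers (each path $P_n$ satisfying $\ggr(P_n)=\theta_e(P_n)=n-1$).
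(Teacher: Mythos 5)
Your proof is correct and is exactly the paper's argument: the lower bound from Proposition~\ref{strong} with $\ggr(P_n)=n-1$, and the upper bound by combining Proposition~\ref{Tfree} (paths are triangle-free, so $\theta_e(P_k\boxtimes P_l)=(k-1)(l-1)$) with Proposition~\ref{prp:edge_cover}. Your additional check that the product has no isolated vertices is a detail the paper leaves implicit, but nothing more needs to be said.
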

\proof
The lower bound follows from Proposition~\ref{strong} and the upper bound from Proposition~\ref{prp:edge_cover} and Proposition~\ref{Tfree}.
\qed


We approach Conjecture~\ref{con:strong} with the following upper bound on the Grundy domination number of the strong product of graphs.

\begin{proposition} \label{Grundy-strong-lower}
Let $G$ and $H$ be arbitrary graphs. Then $$\ggr(G\boxtimes H) \leq \min\{|V(G)|\ggr(H),\ggr(G)|V(H)|\}.$$
\end{proposition}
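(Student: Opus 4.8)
The plan is to prove the one-sided bound $\ggr(G\boxtimes H)\le |V(G)|\ggr(H)$; the symmetric inequality $\ggr(G\boxtimes H)\le \ggr(G)|V(H)|$ then follows at once since the strong product is commutative, so $\ggr(G\boxtimes H)=\ggr(H\boxtimes G)$, and taking the smaller of the two bounds yields the claimed minimum. The structural fact that drives everything is the decoupling of closed neighborhoods in the strong product: $N[(g,h)]=N_G[g]\times N_H[h]$.

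First I would fix a Grundy dominating sequence $S=(s_1,\ldots,s_m)$ of $G\boxtimes H$ with $m=\ggr(G\boxtimes H)$, and use the footprinter function $f_S$ (well defined for a dominating sequence, as noted in Section~\ref{sec:prelim}) to assign to each term $s_t=(g_t,h_t)$ a single vertex $w_t=(a_t,b_t)$ that it footprints. By the product structure, $a_t\in N_G[g_t]$ and $b_t\in N_H[h_t]$. I would then partition the index set $\{1,\ldots,m\}$ according to the \emph{first} coordinate $a_t$ of the footprinted vertex: for each $a\in V(G)$, listing the indices $t$ with $a_t=a$ as $t_1<t_2<\cdots$, I form the $H$-sequence $V_a=(h_{t_1},h_{t_2},\ldots)$ consisting of the $H$-coordinates of the corresponding \emph{chosen} vertices. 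Since each term of $S$ footprints exactly one vertex, these groups partition $S$ and $m=\sum_{a\in V(G)}|V_a|$.

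The crux is to show that each $V_a$ is a legal closed neighborhood sequence in $H$, so that $|V_a|\le\ggr(H)$. I would argue that $h_{t_j}$ footprints $b_{t_j}$ in $H$ relative to its predecessors in $V_a$. One inclusion, $b_{t_j}\in N_H[h_{t_j}]$, is immediate from $w_{t_j}\in N[s_{t_j}]$. For the other, suppose $b_{t_j}\in N_H[h_{t_{j'}}]$ for some earlier index $t_{j'}$ in the same group. Because $a_{t_{j'}}=a=a_{t_j}$, the vertex $s_{t_{j'}}$ footprints $(a,b_{t_{j'}})$, so $a\in N_G[g_{t_{j'}}]$; combined with the assumed $H$-adjacency this gives $(a,b_{t_j})\in N_G[g_{t_{j'}}]\times N_H[h_{t_{j'}}]=N[s_{t_{j'}}]$, i.e.\ $w_{t_j}$ was already dominated at step $t_{j'}<t_j$, contradicting that $w_{t_j}$ is footprinted at step $t_j$. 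This proves legality, and distinctness of the entries of $V_a$ falls out of the same argument, since $h_{t_j}=h_{t_{j'}}$ would force $b_{t_j}\in N_H[h_{t_j}]=N_H[h_{t_{j'}}]$.

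Finally I would invoke the standard fact that the length of any legal closed neighborhood sequence is at most $\ggr(H)$ — a maximal legal sequence is automatically dominating, since any undominated vertex footprints itself and could be appended — to conclude $|V_a|\le\ggr(H)$ for every $a$, whence $m=\sum_{a}|V_a|\le|V(G)|\ggr(H)$. The main obstacle is precisely the legality step for $V_a$: the naive attempt projects the footprinted coordinates $b_t$ instead of the chosen coordinates $h_t$, and that variant fails, because $b_{t_j}\in N_H[b_{t_{j'}}]$ together with $b_{t_{j'}}\in N_H[h_{t_{j'}}]$ does \emph{not} imply $b_{t_j}\in N_H[h_{t_{j'}}]$. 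The care therefore lies in grouping by the footprinted vertex's $G$-coordinate while tracking the \emph{chosen} vertex's $H$-coordinate, which is exactly the pairing that makes the contradiction close.
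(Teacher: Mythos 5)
Your proof is correct, but it rests on a different decomposition than the paper's. The paper partitions the Grundy dominating sequence $D$ of $G\boxtimes H$ according to the layer $G^h$ containing each \emph{chosen} vertex, and observes that the projection $p_G(D^h)$ of each layer-subsequence is a legal sequence in $G$: if $(g,h)$ footprints $(g',h')$, then $h'\in N_H[h]$, so an earlier term $(g_0,h)$ of the same layer with $g'\in N_G[g_0]$ would give $(g',h')\in N_G[g_0]\times N_H[h]=N[(g_0,h)]$, a contradiction. Hence each of the $|V(H)|$ layers contributes at most $\ggr(G)$ terms, giving $\ggr(G\boxtimes H)\le \ggr(G)|V(H)|$, and the other bound follows by exchanging the roles of the factors, just as in your symmetry step. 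You instead partition the terms of $S$ by the $G$-coordinate of the vertex they \emph{footprint}, and track the $H$-coordinates of the chosen vertices; your contradiction argument (extracting $a\in N_G[g_{t_{j'}}]$ from the earlier footprint and combining it with the assumed adjacency $b_{t_j}\in N_H[h_{t_{j'}}]$) is exactly what makes this grouping close, and your observation about why the naive variant fails is accurate. The trade-off: the paper's grouping by layers makes distinctness automatic and legality a one-line check, because the shared coordinate of a group belongs to the chosen vertices themselves; your grouping needs the extra step, but it proves the slightly different statement that each column $\{a\}\times V(H)$ contains the footprints of at most $\ggr(H)$ terms of $S$ --- a bound on where footprints can land rather than on where chosen vertices can sit. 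Both arguments ultimately rely on the same two ingredients: the product structure $N[(g,h)]=N_G[g]\times N_H[h]$, and the standard fact that any legal sequence has length at most the Grundy domination number (which the paper uses implicitly as well).
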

\proof Let $D$ be a (Grundy) dominating sequence of $G\boxtimes H.$
Consider a layer $G^h$, $h \in V(H)$, and let $D^h$ be the
subsequence of $D$ that consists only of the vertices in $G^h.$ We
claim that the corresponding sequence $p_G(D^h)$ is a legal sequence
in $G$. Indeed, let $(g,h)\in D^h$ and let $(g',h')$ be a vertex
footprinted by $(g,h)$ with respect to the sequence $D$. It is clear
that $g$ footprints $g'$ with respect to the sequence $p_G(D^h).$
This implies that in each $G$-layer there are at most $\ggr(G)$
vertices from $D$. Therefore,  $\ggr(G\boxtimes H) \leq
\ggr(G)|V(H)|.$ By reversing the roles of $G$ and $H$ the claimed
inequality follows. \qed

Recall that a vertex is called simplicial if its neighborhood
induces a complete graph.

\begin{proposition} \label{G-v}
Let $G$ and $H$ be arbitrary graphs. If  $v$ is a simplicial vertex in
$G$, then
 $$\ggr(G\boxtimes H) \leq \ggr(H)+\ggr((G-v)\boxtimes H).$$
\end{proposition}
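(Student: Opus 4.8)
The plan is to take a Grundy dominating sequence $D$ of $G\boxtimes H$ of length $\ggr(G\boxtimes H)$ and bound its length by separating the contribution of the ``$v$-column'' $^v\!H$ from the rest. Write $D^v$ for the subsequence of $D$ consisting of the vertices lying in the layer $^v\!H$, and let $D'$ be the subsequence of the remaining vertices, i.e.\ those in $(G-v)\boxtimes H$. The length of $D$ is $|D^v|+|D'|$, so it suffices to show $|D^v|\le \ggr(H)$ and $|D'|\le \ggr((G-v)\boxtimes H)$.

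For the first bound, I would project $D^v$ onto $H$ via $p_H$ and argue, exactly as in Proposition~\ref{Grundy-strong-lower}, that $p_H(D^v)$ is a legal sequence in $H$: if $(v,h)\in D^v$ footprints some vertex $(g',h')$ with respect to $D$, then since $(v,h)$ and $(g',h')$ are adjacent or equal in $G\boxtimes H$ we have $h'\in N_H[h]$, and one checks that $h$ footprints $h'$ with respect to $p_H(D^v)$. Hence $|D^v|=|p_H(D^v)|\le \ggr(H)$. For the second bound, I would show that $D'$, viewed as a sequence of vertices of the induced subgraph $(G-v)\boxtimes H$, is itself a legal (indeed dominating) sequence there; this gives $|D'|\le \ggr((G-v)\boxtimes H)$ and completes the count.

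The main obstacle is precisely this second claim, and it is where simpliciality of $v$ is needed. The danger is that a vertex $(g,h)\in D'$ with $g\in V(G-v)$ might footprint, within the full product $G\boxtimes H$, only vertices lying in the deleted column $^v\!H$; then $(g,h)$ would have no private neighbor inside $(G-v)\boxtimes H$ and $D'$ would fail to be legal there. To rule this out I would use that $v$ is simplicial: the only vertices of $^v\!H$ that $(g,h)$ can dominate are those $(v,h')$ with $g\in N_G[v]$ and $h'\in N_H[h]$, so $g$ is a neighbor of $v$ in $G$, i.e.\ $g$ lies in the clique $N_G(v)$. The key point is that any vertex $(v,h')$ dominated by $(g,h)$ is also dominated by $(g'',h)$ for every $g''\in N_G[v]$, and more usefully, since $g$ is adjacent to $v$ and $v$ is simplicial, one can transfer the footprinting witness from the $v$-column to a vertex of $(G-v)\boxtimes H$: whenever $(g,h)$ footprints some $(v,h')$, I would argue that it must \emph{also} footprint a vertex outside $^v\!H$ at the moment it is chosen, because the domination of $(v,h')$ by earlier terms of $D$ and the clique structure around $v$ force an undominated private neighbor of $(g,h)$ to exist in $(G-v)\boxtimes H$.

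Making this transfer precise is the technical heart of the argument, and I expect it to require a careful case analysis on whether $(g,h)$'s footprinted vertex lies in $^v\!H$ or not, using that $N_G(v)$ is a clique so that any two terms of $D$ projecting into $N_G[v]$ dominate overlapping portions of $^v\!H$. Once legality of $D'$ in $(G-v)\boxtimes H$ is established, the inequality $\ggr(G\boxtimes H)=|D^v|+|D'|\le \ggr(H)+\ggr((G-v)\boxtimes H)$ follows immediately, proving the proposition.
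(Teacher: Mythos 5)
Your first half is fine and agrees with the paper: projecting the subsequence $D^v$ lying in the layer $^v\!H$ onto $H$ gives a legal sequence of $H$, so $|D^v|\le\ggr(H)$. The gap is in the second half. The ``transfer'' claim you hope to prove --- that for an \emph{arbitrary} Grundy dominating sequence $D$, every vertex $(g,h)\notin {}^v\!H$ that footprints some $(v,h')$ must also footprint a vertex outside $^v\!H$ --- is simply false, so no amount of case analysis will establish it. Take $G=P_3$ with vertices $v,g,w$ and edges $vg,gw$ (so $v$ is a simplicial leaf), and $H=K_2$ with vertices $h_1,h_2$, and consider $D=\bigl((w,h_1),(g,h_1)\bigr)$ in $G\boxtimes H$. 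The first vertex footprints all of $\{g,w\}\times\{h_1,h_2\}$, so the second vertex $(g,h_1)$ footprints exactly $(v,h_1)$ and $(v,h_2)$, i.e.\ only vertices of $^v\!H$; moreover $D$ is dominating and has length $2=\ggr(P_3\boxtimes K_2)$, hence it is a genuine Grundy dominating sequence. For this $D$ your subsequence $D'=\bigl((w,h_1),(g,h_1)\bigr)$ is not legal in $(G-v)\boxtimes H=K_2\boxtimes K_2\cong K_4$; indeed $|D'|=2>1=\ggr\bigl((G-v)\boxtimes H\bigr)$, so the inequality $|D'|\le\ggr((G-v)\boxtimes H)$ fails for this choice of $D$.

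The missing idea is that $D$ cannot be taken arbitrary: the paper chooses, among all Grundy dominating sequences of $G\boxtimes H$, one containing the \emph{maximum} number of vertices of $^v\!H$, and then rules out your bad configuration by an exchange argument rather than a transfer argument. If $(g,h)\notin {}^v\!H$ footprints $(v,h')$, then $g\in N_G(v)$ and, since $v$ is simplicial, $N_G[v]\subseteq N_G[g]$, whence $N[(v,h)]\subseteq N[(g,h)]$ in $G\boxtimes H$; also $h'\in N_H[h]$. Replacing $(g,h)$ by $(v,h)$ therefore keeps the sequence legal: $(v,h)$ still footprints the previously undominated $(v,h')$, and every later vertex keeps its private neighbor because the set dominated by the modified prefix only shrinks. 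Being legal of maximum length, the modified sequence is again a Grundy dominating sequence (otherwise it would extend to a longer legal sequence), and it has strictly more vertices in $^v\!H$ --- contradicting the extremal choice of $D$. This contradiction shows that for the extremal $D$ no vertex outside $^v\!H$ footprints into $^v\!H$, which is exactly what makes the complementary subsequence a dominating sequence of $(G-v)\boxtimes H$. Your example illustrates the mechanism: $\bigl((w,h_1),(g,h_1)\bigr)$ gets exchanged to $\bigl((w,h_1),(v,h_1)\bigr)$, which then decomposes correctly.
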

\proof Consider  a Grundy dominating sequence $D$ of $G \boxtimes H$
such that $D$ contains  maximum number of vertices from $^v\!H$. Let
$D^1$ be the subsequence of $D$ that consists only of the vertices
in $^v\!H$ and let $D^2$ be the complementary subsequence of $D$.
Analogously to the proof of Proposition~\ref{Grundy-strong-lower},
one can show that $D^1$ is a dominating sequence in $^v\!H$.
Consequently, $|\widehat{D^1}| \le \ggr(H)$.

Now assume that a vertex $(g,h)$, which is not from $^v\!H$,
footprints a vertex $(v,h')$ in $^v\!H$. Since $v$ is simplicial,
$N_G[v] \subseteq N_G[g]$. Further, either $h=h'$ or $(v,h)$ and
$(v,h')$ must be adjacent. Therefore, if $(g,h)$ is replaced by
$(v,h)$ in $D$, we obtain a Grundy dominating sequence again, and
this one contains more vertices from $^v\!H$ than $D$ did. This
contradicts  the choice of $D$ and proves that no vertex from
$\widehat{D^2}$ footprints a vertex outside  $(G-v)\boxtimes H$. We
may conclude that $D^2$ is a dominating sequence in
$(G-v)\boxtimes H$, and the desired inequality
$$\ggr(G\boxtimes H)=|\widehat{D^1}|+|\widehat{D^2}| \leq \ggr(H)+\ggr((G-v)\boxtimes H)$$
holds.
 \qed

We say that a graph $G$ \emph{satisfies
Conjecture~\ref{con:strong}} if for every graph $H$,
$\gamma_{gr}(G\boxtimes H)= \gamma_{gr}(G)\gamma_{gr}(H)$ holds. As
a consequence of Proposition~\ref{G-v}, we obtain the following
result related to the conjecture.
\begin{corollary} \label{caterpillar}
Every caterpillar satisfies Conjecture~\ref{con:strong}.
\end{corollary}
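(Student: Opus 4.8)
The plan is to prove the reverse inequality $\ggr(G\boxtimes H)\le \ggr(G)\ggr(H)$ for every caterpillar $G$ and every graph $H$, since the matching lower bound is exactly Proposition~\ref{strong}. I would first invoke the fact recorded after Proposition~\ref{prp:edge_cover} that $\ggr(G)=|V(G)|-1$ for every caterpillar $G$ with at least two vertices, so the target reads $\ggr(G\boxtimes H)\le (|V(G)|-1)\ggr(H)$. The argument then runs by induction on $|V(G)|$, with Proposition~\ref{G-v} as the inductive engine; the case $|V(G)|=1$ is immediate because $K_1\boxtimes H\cong H$.

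For the inductive step, suppose $|V(G)|\ge 3$ and pick a leaf $v$ of $G$. Since the neighbourhood of a leaf is a single vertex, $v$ is simplicial, so Proposition~\ref{G-v} applies and yields $\ggr(G\boxtimes H)\le \ggr(H)+\ggr((G-v)\boxtimes H)$. I would next check that $G-v$ is again a caterpillar: deleting a leaf leaves a tree, and using the characterization that no vertex has more than two non-leaf neighbours, any vertex that is non-leaf in $G-v$ is already non-leaf in $G$ (its degree only drops), so no vertex of $G-v$ can acquire a third non-leaf neighbour. As $G-v$ has $|V(G)|-1\ge 2$ vertices, the induction hypothesis gives $\ggr((G-v)\boxtimes H)=(|V(G)|-2)\ggr(H)$, and substituting closes the step: $\ggr(G\boxtimes H)\le \ggr(H)+(|V(G)|-2)\ggr(H)=(|V(G)|-1)\ggr(H)$.

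The delicate point, and the one I expect to be the main obstacle, is the base case $|V(G)|=2$, that is $G=K_2$. Here the inductive mechanism genuinely fails: reducing to $K_1$ through Proposition~\ref{G-v} only gives $\ggr(K_2\boxtimes H)\le \ggr(H)+\ggr(K_1\boxtimes H)=2\ggr(H)$, which is too weak, essentially because the identity $\ggr=|V|-1$ breaks down at $K_1$ (where $\ggr(K_1)=1\ne 0$). I would therefore handle $K_2$ directly and prove $\ggr(K_2\boxtimes H)=\ggr(H)$. The structural key is that in $K_2\boxtimes H$ the two copies $(1,h)$ and $(2,h)$ of each vertex $h\in V(H)$ share the same closed neighbourhood $\{1,2\}\times N_H[h]$, i.e.\ they are true twins; hence once one of the pair is chosen, the other footprints nothing new, so any legal sequence of $K_2\boxtimes H$ uses at most one vertex from each such pair. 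Projecting such a sequence by $p_H$ then gives distinct vertices of $H$, and whenever $(i_t,h_t)$ footprints some $(i',h')$ in the product we have $h'\in N_H[h_t]\setminus\bigcup_{s<t}N_H[h_s]$, so $h_t$ footprints $h'$ in $H$; thus the projection is legal in $H$ and the original sequence has length at most $\ggr(H)$. Together with Proposition~\ref{strong} this gives $\ggr(K_2\boxtimes H)=\ggr(H)=\ggr(K_2)\ggr(H)$, anchoring the induction and completing the proof.
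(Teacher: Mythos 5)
Your proof is correct and follows essentially the same route as the paper: induction on the order of the caterpillar, deleting a leaf (which is simplicial) and applying Proposition~\ref{G-v}, with Proposition~\ref{strong} providing the matching lower bound. The only difference is that you spell out what the paper dismisses as ``can be shown directly'' --- namely the base case $\ggr(K_2\boxtimes H)=\ggr(H)$, which you settle via the true-twin argument --- and you also verify explicitly that deleting a leaf preserves the caterpillar property.
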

\proof
 Let $G$ be a caterpillar and let $H$ be an arbitrary graph. If  $G$
is of  order $1$ or $2$, $\gamma_{gr}(G\boxtimes H)=
\gamma_{gr}(G)\gamma_{gr}(H)= \gamma_{gr}(H)$ can be shown directly.
Then, we proceed by induction on $n$. As we mentioned at the end of
Section~\ref{sec:prelim}, it is shown in \cite{bgmrr-2014} that any
nontrivial caterpillar $G$ of order $n$ satisfies
$\gamma_{gr}(G)=n-1$. If $n \ge 3$, delete an arbitrary leaf $v$
from $G$. By the induction hypothesis we have
$\gamma_{gr}((G-v)\boxtimes H)= (n-2)\gamma_{gr}(H)$. Then,
Proposition~\ref{G-v} gives
 $$\ggr(G\boxtimes H) \leq \ggr(H)+(n-2)\gamma_{gr}(H)=(n-1)\gamma_{gr}(H)=\ggr(G)\ggr(H).$$
 Together with Proposition~\ref{strong} this establishes our
 statement.
 \qed

We immediately derive the exact value of Grundy domination number of cylinders and
an upper bound for tori.

\begin{corollary}
If $k\geq 2$ and $l \ge 3$, then $\ggr(P_k\boxtimes C_l) =
(k-1)(l-2)$.
\end{corollary}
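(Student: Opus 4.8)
The plan is to establish the equality $\ggr(P_k\boxtimes C_l)=(k-1)(l-2)$ by proving matching lower and upper bounds. For the lower bound, I would invoke Proposition~\ref{strong} together with the known values $\ggr(P_k)=k-1$ and $\ggr(C_l)=l-2$, which immediately gives $\ggr(P_k\boxtimes C_l)\ge \ggr(P_k)\ggr(C_l)=(k-1)(l-2)$. This half is routine and parallels the corresponding step in the corollary on $P_k\boxtimes P_l$.

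The substance lies in the upper bound, and here the natural route is to apply Corollary~\ref{caterpillar}. Since $P_k$ is a path, it is in particular a caterpillar, so by Corollary~\ref{caterpillar} the path satisfies Conjecture~\ref{con:strong}; that is, for every graph $H$ we have $\ggr(P_k\boxtimes H)=\ggr(P_k)\ggr(H)$. Taking $H=C_l$ yields $\ggr(P_k\boxtimes C_l)=\ggr(P_k)\ggr(C_l)=(k-1)(l-2)$ directly. In effect the whole statement reduces to substituting $C_l$ into the caterpillar result and reading off the Grundy domination numbers of the two factors, so the upper bound is not an independent obstacle but a specialization.

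The only point requiring a small amount of care is the boundary value $k=2$: here $P_2\boxtimes C_l=K_2\boxtimes C_l$, and one should check that the caterpillar argument (or the base case of its induction, where $G$ has order $2$) covers this, which it does since $P_2$ is a trivial caterpillar and $\ggr(P_2)=1$, giving $\ggr(P_2\boxtimes C_l)=\ggr(C_l)=l-2=(2-1)(l-2)$. Likewise one verifies that the formula degenerates correctly at the smallest cycle $l=3$, where $\ggr(C_3)=1$. I expect no genuine difficulty in this proof; the main work has already been absorbed into Proposition~\ref{G-v} and Corollary~\ref{caterpillar}, and the present corollary is an immediate application.
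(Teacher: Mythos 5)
Your proposal is correct and follows the paper's own route: the paper derives this corollary immediately from Corollary~\ref{caterpillar} (a path is a caterpillar, so $\ggr(P_k\boxtimes H)=\ggr(P_k)\ggr(H)$ for every $H$), applied with $H=C_l$ and the values $\ggr(P_k)=k-1$, $\ggr(C_l)=l-2$. Your separate appeal to Proposition~\ref{strong} for the lower bound is harmless but redundant, since Corollary~\ref{caterpillar} already asserts equality.
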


\begin{corollary}
If $3 \leq k \leq l,$ then $\ggr(C_k\boxtimes C_l) \leq (k-2)(l-1)$.
\end{corollary}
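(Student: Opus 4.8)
The plan is to reduce the torus to a cylinder by peeling off a single $C_k$-layer, in exact parallel with Proposition~\ref{G-v}. Note first that the target value factors as $(k-2)(l-1)=(k-2)+(k-2)(l-2)$, and that by commutativity of the strong product together with the cylinder equality $\ggr(P_a\boxtimes C_b)=(a-1)(b-2)$ (just derived, applied to $P_{l-1}\boxtimes C_k$) we have $\ggr(C_k\boxtimes P_{l-1})=(l-2)(k-2)$. Since $\ggr(C_k)=k-2$ and $C_l-v\cong P_{l-1}$ for any $v\in V(C_l)$, it therefore suffices to establish the recursive inequality
$$\ggr(C_k\boxtimes C_l)\le \ggr(C_k)+\ggr(C_k\boxtimes P_{l-1}).$$
This is exactly the shape of the bound in Proposition~\ref{G-v}, with the roles of the factors arranged so that we delete a vertex from $C_l$.

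To prove this inequality I would follow the scheme of Proposition~\ref{G-v}. Fix a vertex $v\in V(C_l)$, take a Grundy dominating sequence $D$ of $C_k\boxtimes C_l$, and split it into the subsequence $D^1$ of vertices lying in the layer $C_k^v$ and the complementary subsequence $D^2$. Projecting $D^1$ to $C_k$ yields a legal sequence in $C_k$, exactly as in the proof of Proposition~\ref{Grundy-strong-lower}, whence $|\widehat{D^1}|\le \ggr(C_k)=k-2$. The remaining task is to show that $D^2$ is a dominating sequence of $C_k\boxtimes(C_l-v)=C_k\boxtimes P_{l-1}$, which would give $|\widehat{D^2}|\le \ggr(C_k\boxtimes P_{l-1})=(k-2)(l-2)$ and hence $|\widehat D|=|\widehat{D^1}|+|\widehat{D^2}|\le (k-2)(l-1)$.

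The hard part is precisely this last step, and here the cycle departs from the simplicial situation of Proposition~\ref{G-v}: for $l\ge 4$ no vertex of $C_l$ is simplicial, so the clean swap that pushes a footprinter into the peeled layer is no longer available, and a vertex of $\widehat{D^2}$ could in principle footprint \emph{only} vertices inside $C_k^v$, so that deleting the layer destroys legality. I would try to repair this by choosing a Grundy dominating sequence $D$ that \emph{maximizes} the number of vertices of $\widehat D$ in $C_k^v$, and then arguing by a direct footprinting analysis that such a vertex $(g,h)$ (necessarily with $h$ a neighbor of $v$ in $C_l$) can be replaced by its shift into $C_k^v$ without destroying legality, contradicting maximality; this would force $D^2$ never to footprint into $C_k^v$, so that $D^2$ stays legal and dominating on $C_k\boxtimes P_{l-1}$. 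I expect this replacement step to be the genuine obstacle, since the diagonal edges of the strong product must be tracked carefully when $h\in N_{C_l}(v)$.

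Should the swap resist a direct argument, the robust fallback is a self-contained boundary argument via Lemma~\ref{boundary}, mirroring the treatment of the Cartesian torus in Theorem~\ref{cartesiangrid}\textbf{(c)}. Since $kl-(k-2)(l-1)=k+2l-2$, by Lemma~\ref{boundary} it suffices to show that every Grundy dominating sequence has an initial segment $S$ with $|\partial S|\ge k+2l-2$. Here I would locate the first moment at which some row of $C_k\boxtimes C_l$ accumulates $k-1$ vertices of $\widehat S$ (or, symmetrically, some column accumulates $l-1$), and then count the forced boundary vertices row by row, using that in the strong product each selected vertex also dominates its diagonal neighbors. As the statement is only an upper bound, no matching construction is needed; Proposition~\ref{strong} already supplies the corresponding lower bound $\ggr(C_k\boxtimes C_l)\ge (k-2)^2$.
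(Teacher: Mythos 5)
Your reduction is the one the paper itself intends: the corollary appears there with no written proof, as an ``immediate'' consequence of Proposition~\ref{G-v} and Corollary~\ref{caterpillar}, and your arithmetic is exactly right, since $\ggr(C_k\boxtimes P_{l-1})=(k-2)(l-2)$ by the caterpillar result and $(k-2)+(k-2)(l-2)=(k-2)(l-1)$. But as a proof your proposal is incomplete, and you say so yourself: the entire content of the statement is the peeling inequality $\ggr(C_k\boxtimes C_l)\le\ggr(C_k)+\ggr(C_k\boxtimes P_{l-1})$, and you never establish it. The obstacle you flag is genuine, not a technicality. In Proposition~\ref{G-v} the swap $(g,h)\mapsto(v,h)$ is harmless precisely because simpliciality gives $N_G[v]\subseteq N_G[g]$, hence $N[(v,h)]\subseteq N[(g,h)]$ in the strong product: the replacement dominates nothing new, so no later footprint is stolen, and $(v,h)$ cannot itself occur later in $D$ (everything it could footprint is already dominated by $(g,h)$). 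For $v\in V(C_l)$ with $l\ge 4$ and $g$ a neighbour of $v$, both properties fail: writing $v'$ for the other neighbour of $v$, the set $N[(v,h)]\setminus N[(g,h)]$ is the whole strip $\{v'\}\times N_{C_k}[h]$ on the far side of the deleted layer, so the swap can newly dominate vertices there and destroy the legality of later terms, and $(v,h)$ may genuinely appear later in $D$. So the maximality-of-$D^1$ contradiction does not go through, and no repair is given.

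The fallback cannot work as described. By the projection argument of Proposition~\ref{Grundy-strong-lower} (valid for the strong product), every $C_k$-layer of $C_k\boxtimes C_l$ contains at most $\ggr(C_k)=k-2$ terms of \emph{any} legal sequence, and every $C_l$-layer at most $l-2$; hence your trigger event, ``some row reaches $k-1$ vertices (or some column $l-1$),'' never occurs. In the Cartesian arguments of Theorem~\ref{cartesiangrid} the non-occurrence of the trigger is itself the desired bound, but here non-occurrence only yields $m\le (k-2)l$, which exceeds the target $(k-2)(l-1)$ by exactly $k-2$; so the boundary count you intend to run is never reached, and Lemma~\ref{boundary} has nothing to act on. (The closing remark about a lower bound is beside the point, since the statement is an upper bound; in any case Proposition~\ref{strong} gives $(k-2)(l-2)$, not $(k-2)^2$.) In fairness, the step you are stuck on is exactly the step the paper glosses over: Proposition~\ref{G-v} needs a simplicial vertex, and cycles of length at least $4$ have none, so the corollary does not follow from the stated results as immediately as the paper suggests. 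But a complete argument must close that gap---either by proving a peeling inequality for a non-simplicial vertex of a cycle or by a genuinely different counting---and yours does not.
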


Note that the conjectured value for a torus is  $\ggr(C_k\boxtimes
C_l) = (k-2)(l-2).$ We further remark that, by
Corollary~\ref{caterpillar},  strong products of any number of
caterpillars also satisfy Conjecture~\ref{con:strong}. Particularly,
the following exact results can be derived:
\begin{align*}
\ggr(P_{k_1}\boxtimes \cdots \boxtimes P_{k_n})&=(k_1-1)\cdots
(k_n-1)\\
\ggr(P_{k_1}\boxtimes \cdots \boxtimes P_{k_{n}}\boxtimes
C_{l})&=(k_1-1)\cdots (k_{n}-1)(l-2).
\end{align*}

Finally, we consider a  graph operation related to the conjecture.
We say that $G'$ is obtained from $G$ by substituting a vertex $v
\in V(G)$ with $K_\ell$ if $v$ is replaced with the complete graph on
$\ell$ vertices such that each of these $\ell$ new vertices is made
adjacent to the entire $N_G(v)$.

\begin{proposition} If $G$ satisfies Conjecture~\ref{con:strong}
and $G'$ is obtained from $G$ by  substituting a vertex $v \in V(G)$
with a complete graph, then $G'$ also satisfies
Conjecture~\ref{con:strong}.
\end{proposition}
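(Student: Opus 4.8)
The plan is to reduce the statement for $G'$ to the already-known statement for $G$ by exploiting the fact that substituting a vertex with a complete graph changes the Grundy domination number in a completely controlled way. First I would establish the effect on the factor alone: if $G'$ arises from $G$ by replacing $v$ with $K_\ell$, then $\ggr(G')=\ggr(G)+(\ell-1)$. The lower bound here is easy, since a Grundy dominating sequence of $G$ extends by appending $\ell-1$ of the new clique vertices (each footprints itself, as the clique vertices share the same closed neighborhood up to the clique itself). For the upper bound, I would argue that in any legal sequence of $G'$, the new clique vertices behave like a single ``super-vertex'' together with $\ell-1$ extra degrees of freedom: once one clique vertex is chosen, all of $N_{G'}[v]$ outside the clique is dominated, so any subsequent clique vertices can only footprint other clique vertices, contributing at most $\ell-1$ beyond what the corresponding sequence in $G$ would contribute.

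The key observation that makes the product argument work is that the same substitution structure lifts to the strong product: $G'\boxtimes H$ is obtained from $G\boxtimes H$ by substituting the entire layer $^v\!H$ in a clique-like fashion. More precisely, I would want a lemma of the form $\ggr(G'\boxtimes H)=\ggr(G\boxtimes H)+(\ell-1)\ggr(H)$. The lower bound follows from Proposition~\ref{strong} applied with the factor identity $\ggr(G')=\ggr(G)+(\ell-1)$ once we know the conjecture holds, but to avoid circularity I would instead build the sequence directly: take an optimal sequence in $G\boxtimes H$ and augment it using the $\ell-1$ extra clique layers, each of which can legally absorb $\ggr(H)$ vertices by the same footprinting argument used in Proposition~\ref{strong}. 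The upper bound is the crux and would be proved by an argument patterned on Proposition~\ref{G-v}: choose a Grundy dominating sequence of $G'\boxtimes H$ containing the maximum number of vertices from the new clique layers, show that each such layer contributes at most $\ggr(H)$ (its projection being a dominating sequence of an $H$-layer), and show via the simpliciality-type exchange argument that no vertex outside these layers needs to footprint into them, so the rest projects to a dominating sequence of $G\boxtimes H$.

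With both identities in hand, the proposition follows by a short computation:
\begin{align*}
\ggr(G'\boxtimes H) &= \ggr(G\boxtimes H)+(\ell-1)\ggr(H)\\
&= \ggr(G)\ggr(H)+(\ell-1)\ggr(H)\\
&= \big(\ggr(G)+(\ell-1)\big)\ggr(H)=\ggr(G')\ggr(H),
\end{align*}
where the second equality uses the hypothesis that $G$ satisfies Conjecture~\ref{con:strong}, and the final equality uses $\ggr(G')=\ggr(G)+(\ell-1)$. The reverse inequality $\ggr(G'\boxtimes H)\ge\ggr(G')\ggr(H)$ is automatic from Proposition~\ref{strong}, so the two bounds coincide and $G'$ satisfies the conjecture.

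The main obstacle I anticipate is the upper bound in the product identity, i.e. showing that the new clique layers contribute no more than $(\ell-1)\ggr(H)$ collectively and that they can be decoupled from the rest of the sequence. The delicate point is that a vertex in layer $^{v_i}\!H$ (where $v_i$ is a new clique vertex) can footprint a vertex in a \emph{different} clique layer $^{v_j}\!H$ or in $^v\!H$-adjacent layers, since all new clique vertices share the neighborhood $N_G(v)$; I will need the exchange/replacement argument to canonicalize the sequence so that footprinting across clique layers is eliminated, mirroring how simpliciality was used in Proposition~\ref{G-v} but now applied to a whole family of twin layers rather than a single simplicial vertex. Handling these twins simultaneously, rather than one at a time, is where the bookkeeping will be heaviest.
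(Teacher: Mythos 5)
Your proof rests on the identity $\ggr(G')=\ggr(G)+(\ell-1)$, and this identity is false. The $\ell$ new vertices are pairwise \emph{true twins} in $G'$: each has closed neighborhood $N_G(v)\cup\{v_1,\dots,v_\ell\}$. Consequently, at most one clique vertex can ever appear in a legal sequence of $G'$ (the first one selected dominates the common closed neighborhood of all of them), and your proposed extension of a Grundy dominating sequence of $G$ by $\ell-1$ clique vertices is illegal: a dominating sequence of $G$ already dominates $v$, so its lift to $G'$ dominates every $v_i$ together with all of $N_G(v)$, leaving the appended vertices nothing to footprint. Concrete counterexample: take $G=P_3$ and substitute the middle vertex with $K_2$. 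Then $G'$ is $K_4$ minus an edge and $\ggr(G')=2=\ggr(P_3)$, not $3$. The correct identity is $\ggr(G')=\ggr(G)$, since adding a true twin never changes the Grundy domination number (this is the observation from \cite{bgk2016+} that the paper invokes). For the same reason your product identity $\ggr(G'\boxtimes H)=\ggr(G\boxtimes H)+(\ell-1)\ggr(H)$ is also false; the truth is $\ggr(G'\boxtimes H)=\ggr(G\boxtimes H)$, because twins in a factor lift to twins in the strong product (as $N[(x,h)]=N[x]\times N[h]$), so the extra layers $^{v_i}\!H$ can be deleted vertex by vertex without changing the Grundy domination number. Your two errors cancel in the final algebra, which is why you still land on the true conclusion, but the proof itself is invalid: both of its supporting lemmas are wrong.

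The paper's actual argument is precisely the twin-based reduction sketched above: view $G'$ as $G$ with $\ell-1$ true twins of $v$ added successively; use the fact that $\ggr(F-x)=\ggr(F)$ whenever $x$ has a twin in $F$, applied both to $G'$ itself and, layer by layer, to $G'\boxtimes H$; and conclude $\ggr(G'\boxtimes H)=\ggr(G\boxtimes H)=\ggr(G)\ggr(H)=\ggr(G')\ggr(H)$. If you replace your two identities by their additive-term-free versions, the skeleton of your final computation survives verbatim, and the heavy exchange and bookkeeping you anticipated in your last paragraph (canonicalizing footprints across the twin layers) becomes unnecessary --- there is nothing to decouple, because the twin layers contribute nothing extra at all.
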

\proof Clearly, a graph isomorphic to $G'$ can be obtained from $G$
by successively adding (true) twin vertices to $v$.
  It was observed in \cite{bgk2016+} that if $x$
has a twin in a graph $F$ then  $\ggr(F-x)=\ggr(F)$. Further, if $x$
has a twin in $F$, then for every graph $H$ and for every $h \in
V(H)$, the vertex $(x,h)$ also has a twin in $F \boxtimes H$. These
imply $\ggr((F-x)\boxtimes H)=\ggr(F \boxtimes H)$. Using these
statements in opposite direction and successively, we obtain the
following. If $G$ satisfies Conjecture~\ref{con:strong} then
$$ \ggr(G'\boxtimes H)=\ggr (G\boxtimes H)= \ggr
(G)\ggr(H)=\ggr(G')\ggr(H)$$ holds for every graph $H$. \qed

\section*{Acknowledgements}

Research of B.\ Bre\v sar, T.\ Gologranc, S.\ Klav\v zar, and G.\ Ko\v smrlj was supported by Slovenian Research Agency under the grants N1-0043 and  P1-0297.

Research of Cs.\ Bujt\' as, B.\ Patk\' os, Zs.\ Tuza, and M.\ Vizer
was supported by the National Research, Development and Innovation
Office -- NKFIH under the grant SNN 116095.

Research of B.\ Patk\'os was supported by the J\'anos Bolyai Research Fellowship of the Hungarian Academy of Sciences.

\baselineskip11pt


\begin{thebibliography}{99}

\bibitem{abay-2009}
  G.~Abay-Asmerom, R.~H.~Hammack, D.~T.~Taylor,
  Perfect $r$-codes in strong products of graphs,
  Bull.\ Inst.\ Combin.\ Appl.\ 55 (2009) 66--72.

\bibitem{bl1991}
  B.~Bollob\'as, I.~Leader,
  Compressions and isoperimetric inequalities,
  J.\ Combin.\ Theory Ser.\ A 56 (1991) 47--62.

\bibitem{bresar-2012}
  B.~Bre\v sar, P.~Dorbec, W.~Goddard, B.~L.~Hartnell, M.~A.~Henning, S.~Klav\v{z}ar, D.~F.~ Rall,
  Vizing's conjecture: a survey and recent results,
  J. Graph Theory 69 (2012) 46--76.

\bibitem{bgk2016+}
  B.~Bre\v sar, T.~Gologranc, T.~Kos,
  Dominating sequences under atomic changes with applications in Sierpi\'nski and interval graphs, arXiv:1603.05116 [math.CO].

\bibitem{bgmrr-2014}
  B.~Bre{\v{s}}ar, T.~Gologranc, M.~Milani\v c, D.~F.~Rall, R.~Rizzi,
  Dominating sequences in graphs,
  Discrete Math.\ 336 (2014) 22--36.


\bibitem{bhr-2016}
  B.~Bre{\v{s}}ar, M.~A.~Henning, D.~F.~Rall,
  Total dominating sequences in graphs,
  Discrete Math.\ 339 (2016) 1165--1676.

\bibitem{egp-66}
  P.~Erd\H{o}s, A.~W.~Goodman, L.~P\' osa,
  The representation of a graph by set intersections,
  Canad.\ J.\ Math.\ 18 (1966) 106--112.

\bibitem{gprt-2011}
  D.~Gon{\c{c}}alves, A.~Pinlou, M.~Rao, S.~Thomass\'{e},
  The domination number of grids,
  SIAM J.\ Discrete Math.\ 25 (2011) 1443--1453.

\bibitem{hik-2011}
  R.~Hammack, W.~Imrich, S.~Klav\v{z}ar,
  Handbook of Product Graphs, Second Edition,
  CRC Press, Boca Raton, FL, 2011.

\bibitem{hhs-1998}
  T.~W.~Haynes, S.~T.~Hedetniemi, P.~Slater,
  Fundamentals of Domination in Graphs,
  Marcel Dekker Inc., New York, NY, 1998.

\bibitem{klavzar-1995}
  S.~Klav\v{z}ar, N.~Seifter,
  Dominating Cartesian products of cycles,
  Discrete Appl.\ Math.\ 59 (1995) 129--136.

\bibitem{klavzar-2006}
  S.~Klav\v{z}ar, S.~\v{S}pacapan, J.~\v{Z}erovnik,
  An almost complete description of perfect codes in direct product of cycles,
  Adv.\ in Appl.\ Math.\ 37 (2006) 2--18.

\bibitem{nowakowski-1996}
  R.~J.~Nowakowski, D.~F.~Rall,
  Associative graph products and their independence, domination and coloring numbers,
  Discuss.\ Math.\ Graph Theory 16 (1996) 53--79.

\bibitem{mcmc-99}
  T.~A.~McKee, F.~R.~McMorris,
  Topics in Intersection Graphs Theory,
  SIAM, Philadelphia, 1999.

\bibitem{pavlic-2013}
  P.~Pavli\v{c}, J.~\v{Z}erovnik,
  A note on the domination number of the Cartesian products of paths and cycles,
  Kragujevac J.\ Math.\ 37 (2013) 275--285.

\bibitem{riordan-1998}
  O.~Riordan,
  An ordering on the even discrete torus,
  SIAM J. Discrete Math. 11 (1998) 110--127.

\bibitem{taylor-2009}
  D.~T.~Taylor,
  Perfect $r$-codes in lexicographic products of graphs,
  Ars Combin.\ 93 (2009) 215--223.

\bibitem{V1963}
  V.~G. Vizing,
  The cartesian product of graphs,
  Vy\v cisl. Sistemy 9 (1963) 30--43.

\bibitem{zerovnik-2008}
  J.~\v{Z}erovnik,
  Perfect codes in direct products of cycles---a complete characterization,
  Adv.\ in Appl.\ Math.\ 41 (2008) 197--205.

\end{thebibliography}
\end{document}